\documentclass{amsart}
\usepackage[utf8]{inputenc}
\usepackage[left=1.2in, right=1.2in, top=1in]{geometry}
\usepackage{esint}
\usepackage{amsmath,amssymb,amsthm,mathrsfs,color,times,textcomp,verbatim,mathtools}
\allowdisplaybreaks
\usepackage{xcolor}
\usepackage[colorlinks=true]{hyperref}
\hypersetup{urlcolor=blue, citecolor=red, linkcolor=blue}
\usepackage[numbers, sort&compress]{natbib}
\theoremstyle{plain}
\numberwithin{equation}{section}
\newtheorem{thm}{Theorem}[section]
\newtheorem{lem}{Lemma}[section]
\newtheorem{prop}{Proposition}[section]

\newtheorem{cor}{Corollary}[section]
\newtheorem{rem}{Remark}[section]

\usepackage{graphicx}
\numberwithin{equation}{section}

\def\Xint#1{\mathchoice
	{\XXint\displaystyle\textstyle{#1}}%
	{\XXint\textstyle\scriptstyle{#1}}%
	{\XXint\scriptstyle\scriptscriptstyle{#1}}%
	{\XXint\scriptscriptstyle\scriptscriptstyle{#1}}%
	\!\int}
\def\XXint#1#2#3{{\setbox0=\hbox{$#1{#2#3}{\int}$ }
		\vcenter{\hbox{$#2#3$ }}\kern-.6\wd0}}

\def\dashint{\Xint-}

\title[$T$-curvature flow]{\bf Prescribed $T$-curvature flow on the four-dimensional unit ball}

\author{Pak Tung Ho}

\address{Department of Mathematics, Tamkang University, Tamsui, New Taipei City 251301, Taiwan}

\email{paktungho@yahoo.com.hk}
\author{Cheikh Birahim NDIAYE}
\address{Department of Mathematics Howard University Annex 3, Graduate School of Arts and Sciences DC 20059 Washington, USA}
\email{cheikh.ndiaye@howard.edu}
\author{Liming Sun}
\address{State Key Laboratory of Mathematical Sciences, Academy of Mathematics and Systems Science, Chinese Academy of Sciences, Beijing 100190, China}
\email{lmsun@amss.ac.cn}

\author{Heming Wang}
\address{State Key Laboratory of Mathematical Sciences, Academy of Mathematics and Systems Science, Chinese Academy of Sciences, Beijing 100190, China}
\email{hmwang@amss.ac.cn}

\date{\today \,(Last Typeset)}
\subjclass[2020]{Primary 35B33; 35J35. Secondary 53C21}
\keywords{Nirenberg problem;  $T$-curvature flow;  Conformally invariant; Morse theory.}

\newcommand{\pa}{\partial}
\renewcommand{\d}{\mathrm{d}}
\newcommand{\R}{\mathbb{R}}
\renewcommand{\S}{\mathbb{S}}
\newcommand{\cS}{\mathcal{S}}
\newcommand{\B}{\mathbb{B}}
\newcommand{\cB}{\mathcal{B}}
\newcommand{\cH}{\mathcal{H}}
\newcommand{\N}{\mathbb{N}}
\renewcommand{\H}{\mathcal{H}}
\renewcommand{\P}{\mathbb{P}}
\newcommand{\cP}{\mathcal{P}}
\newcommand{\la}{\langle}
\newcommand{\ra}{\rangle}
\newcommand{\Si}{\Sigma}
\newcommand{\si}{\sigma}

\newcommand{\morse}{\operatorname{morse}\mkern1mu}
\newcommand{\vol}{\operatorname{vol}\mkern1mu}
\newcommand{\be}{\begin{equation}}      
	\newcommand{\ee}{\end{equation}} 
\newcommand{\al}{\alpha}
\newcommand{\var}{\varepsilon} 
\newcommand{\lam}{\lambda}
\newcommand{\Lam}{\Lambda}
\newcommand{\kap}{\kappa}

\begin{document}

	\begin{abstract}
		In this paper, we study the prescribed $T$-curvature problem on the unit ball $\B ^4$ of $\R ^4$ via the  $T$-curvature flow approach.  By combining the Ache-Chang \cite{AC2017}'s inequality with the Morse theoretical approach of Malchiodi-Struwe \cite{MS2006}, we establish existence results under strong Morse type inequalities at infinity. As a byproduct of our argument, we also prove the exponential convergence of the $T$-curvature flow on $\B ^4$, starting from a $Q$-flat and minimal metric conformal to  the standard Euclidean metric, to an extremal metric of the Ache-Chang \cite{AC2017}'s inequality whose explicit expression was  derived by Ndiaye-Sun \cite{NS2024}.
	\end{abstract}
	\maketitle
	\tableofcontents
	\section{Introduction}
		The problem of finding a conformal metric on a Riemannian manifold with certain prescribed
	curvature function has been extensively investigated in  recent decades. This study seeks to understand the analytic theory of conformally invariant operators and the geometric structure of the underlying manifold; see, e.g., \cite{N2024,N2009,S1984,CQ1997-1,CQ1997-2,J2013,B1985,GJMS1992}.  In particular, the case of the unit sphere is of special interest, where the action of a noncompact conformal group gives rise to essential obstructions to the  existence of conformal metrics;  see, e.g., \cite{H1990,L1995,L1996,XZ2016,S2005,MS2006}. Before introducing the main problem studied in this paper, we first recall some fundamental geometric notions and conformal quantities.
	
	In the context of 2-dimensional geometry,  the Laplace-Beltrami operator on a compact  surface $(\Si,g) $ governs the transformation law of the Gaussian curvature. In fact, under
	a conformal change of metric $\tilde{g}=e^{2u}g$, one has
	\be\label{conformal-1}
	\Delta_{\tilde{g}}=e^{-2u}\Delta_g\quad \text{ and }\quad   -\Delta_{g}u+K_ {g}=K_{\tilde{g}}e^{2u} \quad \mbox{ in }\, \Si ,
	\ee 
	where $\Delta_g=\operatorname{div}_g \nabla_g $ and $K_{g}$ denote the Laplace-Beltrami operator and  Gaussian curvature of $(\Si, g)$, respectively, while $\Delta_{\tilde{g}}$ and $K_ {\tilde{g}}$ are the corresponding objects associated with $(\Si, \tilde{g})$. If  $(\Si ,g)$ has a smooth  boundary  $\pa \Si$, with $\frac{\pa}{\pa \nu_g}$ denoting the outward normal derivative on $\pa\Si$, then under the same conformal change,  the pair $(\Delta_{g},\frac{\pa}{\pa \nu_g})$  governs the transformation laws of both the Gaussian curvature $K_{g}$ in $\Si $ and the geodesic
	curvature $k_g$ of $\pa\Si$:
	\be \label{conformal-2}
	\left\{
	\begin{aligned}
		&   \Delta_{\tilde{g}}=e^{-2u}\Delta_g, \\
		&	\frac{\pa }{\pa \nu_{\tilde{g}}}=e^{-u}\frac{\pa }{\pa \nu_g},
	\end{aligned}
	\right. \quad\text{ and } \quad \left\{
	\begin{aligned}	 &-\Delta_{g}u+K_ {g}=K_{\tilde{g}}e^{2u} && \mbox{ in }\, \Si,  \\ &\frac{\pa  u}{\pa \nu_g}+k_{g}=k_{\tilde{g}}e^{u}  &&\mbox{ on }\, \pa \Si ,	\end{aligned}
	\right.
	\ee 
	where  $\frac{\pa }{\pa \nu_{\tilde{g}}}$ and $k_{\tilde{g}}$ denote the normal derivative and the geodesic curvature of $\pa\Si$ associated with $(\Si, \tilde{g})$. Moreover, we have the Gauss–Bonnet formula
	\be\label{GB1}
	\int_{\Si} K_g \,\d v_g+\int_{\pa\Si} k_g \, \d s_g=2 \pi \chi(\Si),
	\ee
	where $\chi(\Si)$ is the Euler-Poincar\'e characteristic of $\Si$, $\d v_g$ is the element area of $\Si$ and $\d s_g$ is the line element of $\pa \Si$. Thus, \eqref{GB1} is a topological invariant and hence a conformal invariant.

	In the search for higher-order conformally invariant operators, Paneitz
	in 1983 (see \cite{P2008}) discovered a remarkable fourth-order differential operator on  compact four-manifolds, defined by
	\be\label{def:P_g^4}
	P_g^4:=(-\Delta_g)^2-\operatorname{div}_g\Big(\frac{2}{3}R_g g-2\operatorname{Ric}_g\Big)\d,
	\ee
	where $\d $ is De Rham differential,   and $R_g$ and  $\operatorname{Ric}_g$ are   the scalar curvature and the Ricci tensor associated with the metric $g$, respectively.    The corresponding $Q$-curvature (denoted by $Q^4_g$ or $Q_g$), 
	introduced by Branson (see \cite{BO1991,B1985}),  is expressed as
	\be\label{def:Q_g}
	Q_g:=-\frac{1}{6}(\Delta_gR_g-R_g^2+3|\operatorname{Ric}_g|^2).
	\ee 
	Similarly to the role  of the  Laplace-Beltrami operator and the Gaussian curvature in two-dimensions, as shown in \eqref{conformal-1}, the Paneitz operator governs the conformal transformation laws of geometric quantities under a change of metric. Indeed, for  a conformal change of metric $\tilde{g}=e^{2 u} g$, one has
	\be\label{conformal-P4}
	P_{\tilde{g}}^4=e^{-4u}P_g^4\quad \text{ and }\quad P_g^4 u+ Q_g=Q_{\tilde{g}} e^{4 u} \quad \text { on }\, M ,
	\ee 
	where $P_{\tilde{g}}^4$ and $Q_{\tilde{g}}$ denote the  Paneitz
	operator and $Q$-curvature associated with  the metric $\tilde{g}$. 
	
	More generally, a metrically defined operator   $A$ on a compact manifold $(M,g)$ is said to be conformally covariant if, under the conformal change of metric $\tilde{g}=e^{2 u} g$, the pair of corresponding operators $A_g$ and $A_{\tilde{g}}$ are related by
	\be\label{confromal}
	A_{\tilde{g}}(\phi)=e^{-b u} A_g(e^{a u} \phi), \quad \forall \,\phi \in C^{\infty}(M),
	\ee
	for some constants $a$ and $b$. Basic examples of such second order operators include the Laplacian $\Delta_g$ in dimension $n =2$, and the conformal Laplacian $-\frac{4(n-1)}{n-2} \Delta_g+R_g$ in dimensions $n \geq 3$, where $R_g$ is  the scalar curvature of the metric $g$. In particular, the Paneitz operator $P^4_g$  is conformally covariant in the sense of  \eqref{confromal} with $(a,b)=(0,4)$.

	On compact manifolds of even dimension $n$, the existence of a conformally covariant operator $P^n_g$ satisfying $P_n^{\tilde{g}}=e^{-n u}P^n_g$ was established by Graham, Jenne, Mason, and Sparling \cite{GJMS1992}. On the sphere  $\S^n$, explicit formulas for  $P^n_{g_{\S^n}}$ on $\S^n$ were given by Branson  \cite{B1987} and Beckner \cite{B1993}:
	\be\label{operatorP3}
	P^n_{g_{\S^n}}=  \left\{\begin{aligned}
		&\Pi_{k=0}^{\frac{n-2}{2}}(-\Delta_{g_{\S^n}}+k(n-k-1)) && \text { if $n$ is even}, \\ & \Big(-\Delta_{g_{\S^n}}+\Big(\frac{n-1}{2}\Big)^2\Big)^{1 / 2} \Pi_{k=0}^{\frac{n-3}{2}}(-\Delta_{g_{\S^n}}+k(n-k-1)) && \text { if $n$ is odd.} 
	\end{aligned}\right.
	\ee

	When $n=3$ or 4, one can further associate to $P_g^n$ a natural curvature quantity $Q^n_g$ of order $n$, characterized by the conformal transformation law 
	\be\label{Q-curvature-equation}
	P_{g}^n u+Q_{g}^n=Q_{\tilde g}^n e^{n u} \quad \text { on } \,M ,
	\ee
under the conformal change $\tilde{g}=e^{2u} g$. In particular, on the standard sphere $\S^n$, if  $\tilde{g}$ 
	is isometric to the standard metric $g_{\S^n}$, then equation \eqref{Q-curvature-equation} reduces to
	\be\label{Q-curvature-eq} 
	P_{g_{\S^n}}^n u+(n-1)!=(n-1)! e^{n u} \quad \text { on } \,\S^n .
	\ee

		Before stating the problem investigated in this paper, we need to describe important boundary
		analogues of the Paneitz operator and of $Q$-curvature for compact manifolds with
		boundary in dimension four. Let  $(M, \pa M, g)$ be  a smooth  four-manifold with boundary, and denote by $\hat{g}$  the restriction of $g$ to $\pa M$. Let $\nu_g$ be the unit \emph{outward normal} vector field along the boundary, and let us use Greek indices to denote directions tangent to the boundary, that is, $g_{\al \beta}=\hat{g}_{\al \beta}$. With this convention, the second fundamental form is given by $\amalg_{\al \beta}=\frac{1}{2} \frac{\pa}{\pa \nu_g} g_{\al \beta}$ and  the mean curvature is defined as
		\be\label{def:mean_curvature}
		H_g:=\frac{1}{3}g^{\al \beta} \amalg_{\alpha \beta}.
		\ee
		The boundary operator $P_g^{3,b}$ and a boundary curvature $T_g$, introduced by Chang and Qing  \cite{CQ1997-1}, are defined respectively by
		\begin{align}
			P_g^{3,b}\phi :=&\,-\frac{1}{2}\frac{\pa \Delta_g\phi}{\pa \nu_g} -\Delta_{\hat{g}}\frac{\pa \phi }{\pa \nu_g}
			-2H_g\Delta_{\hat{g}}\phi+\amalg_{\al \beta}(\nabla_{\hat{g}})_{\alpha} (\nabla_{\hat{g}})_{\beta}\phi \nonumber\\&\, +\la \nabla_{\hat{g}}H_g,\nabla_{\hat{g}} \phi \ra _{\hat{g}}+(F-2J)\frac{\pa \phi}{\pa \nu_g}	\label{def:P_g^3}
		\end{align}
		and 
		\be\label{def:T_g}
		T_g:=-\frac{1}{2} \frac{\pa J}{\pa \nu_g} +3J H_g-\la G, \amalg\ra+3 H_g^3-\frac{1}{3} \operatorname{tr}_{\hat{g}}(\amalg^3)+ \Delta_{\hat g} H_g,
		\ee
		where $\phi $ is any smooth function on $M$, $F=\operatorname{Ric}_{n n}$, $J=\frac{R_g}{6}|_{\pa M}$ is the restriction to the boundary of the ambient scalar curvature $R_g$, $G_\beta^\al=R_{n \beta n}^\al$,  $\langle G, \mathrm{II}\rangle=R_{\alpha n \beta n} \Pi_{\alpha \beta}$, and $\Pi_{\alpha \beta}^3=\Pi_{\alpha \gamma} \Pi^{\gamma \delta} \Pi_{\delta \beta}$.
		We point out that $(P_g^{3,b}, T_g)$ depends not only on the intrinsic geometry of $\pa M$, but also on the extrinsic geometry of $\pa M$ in $M$; see \cite{CQ1997-1,CQ1997-2}.

		Similar to the  operators $(\Delta_g,\frac{\pa}{\pa\nu_g})$ and curvatures  $(K_g, k_g)$  on a Riemannian surface $(\Si,\pa\Si,g)$,  which govern the conformal transformation laws \eqref{conformal-2},  the pair $(P_g^4, P_g^{3,b})$ plays an analogous role for $(Q_g,T_g)$ on a four-manifold $(M, \pa M, g)$. Indeed, under the conformal change $\tilde{g}=e^{2u}g$, one has
		\be \label{conformal-3}
		\left\{
		\begin{aligned}
			&  P_{\tilde{g}}^4=e^{-4u}P^4_{g}, \\
			&P_{\tilde{g}}^{3,b}=e^{-3u}P^{3,b}_{g},
		\end{aligned}
		\right. \quad\text{ and }\quad   \left\{
		\begin{aligned}
			& P^4_{g}u+Q_{g}=Q_{\tilde{g}}e^{4u} && \hbox{ in }\, M, \\
			& P^{3,b}_{g}u+T_{g}=T_{\tilde{g}}e^{3u} && \hbox{ on }\,\pa  M,\\
			&\frac{\pa u}{\pa \nu_{g}} +H_{g}=H_{\tilde{g}}e^{u}&& \hbox{ on }\, \pa  M.
		\end{aligned}
		\right.
		\ee 
		In addition to these resemblances, the classical Gauss–Bonnet formula \eqref{GB1} extends to four dimensions as the Gauss–Bonnet–Chern theorem:
		\be \label{GBC-2}
		\int_M(Q_g+|W_g|^2)\, \d v_g +2\int_{\pa  M}(T_g+Z_g)\, \d s _{\hat g}=8\pi^2\chi(M),
		\ee 
		where $W_g$ denotes the Weyl tensor of  $(M,g)$, $Z_g$ (see \cite{CQ1997-1} for the definition) is a pointwise conformal
		invariant  that vanishes if $\pa M$ is totally geodesic, and 
		$\chi(M)$ is the Euler-characteristics of $M$.  
		Setting
		\[
		\kap _{P_g^4}:=\int_M Q_g \,\d v_g \quad \text { and } \quad \kap _{P_g^{3,b}}:=\int_{\pa M} T_g \,\d s_{\hat g},
		\]
		it follows from \eqref{GBC-2} and the pointwise conformal invariance of $W_g \,\d v_g$ and $Z_g \,\d s_{\hat g}$ that the sum $\kap _{P_g^4}+\kap _{P_g^{3,b}}$ is itself conformally invariant. We denote this invariant quantity by
		\be\label{kap-sum}
		\kap _{(P^4_g, P^{3,b}_{g})}:=\kap _{P_g^4}+\kap _{P_g^{3,b}} .
		\ee

		Building on the boundary conformal quantities introduced above, it is natural to study the prescribed 
		$T$-curvature problem on four-manifolds with boundary:  whether a given compact four-dimensional  Riemannian manifold $(M,g)$  with boundary $\pa M$  admits a conformal metric $\tilde{g}$ such that  $Q_{\tilde{g}}=0$ in $M$,  $T_{\tilde{g}}$ equals a prescribed function $f$ on $\pa M$, and $(M,\tilde{g})$ has minimal boundary. 
		
		Let $\mathbb{P}_g^{4,3}  $ be the bilinear form acting on functions satisfying a homogeneous Neumann condition as in  \cite{N2009,N2024}.  From the viewpoint of a Liouville-type problem, we refer to the assumptions  $\operatorname{ker} \mathbb{P}_g^{4,3} \simeq \R$ and $\kap_{(P_g^4, P_g^{3,b})} \notin 4 \pi^2 \N$ as the nonresonant case, while the situation  $\operatorname{ker} \mathbb{P}_g^{4,3} \simeq \R$ and $\kap_{(P_g^4, P_g^{3,b})} \in 4 \pi^2 \N$ is called the resonant case.  To the best of our knowledge, the first existence result was obtained by Chang and Qing \cite{CQ1997-2} under the assumptions that $\mathbb{P}_g^{4,3}$ is nonnegative, $\operatorname{ker} \mathbb{P}_g^{4,3} \simeq \R$, and $\kap_{(P_g^4, P_g^{3,b})}<4 \pi^2$. An alternative proof based on geometric flow methods was later provided by Ndiaye \cite{N2009}.  Subsequently, Ndiaye  \cite{N2008} developed a variant of the min-max scheme of Djadli-Malchiodi \cite{DM2008} to extend the result of Chang-Qing \cite{CQ1997-2}, showing that the problem is solvable whenever $\operatorname{ker} \mathbb{P}_g^{4,3} \simeq \R$ and $\kap_{(P_g^4, P_g^{3,b})}\notin 4\pi^2\N$. The resonant case was then treated by   Ndiaye \cite{N2024} via Bahri's critical points at infinity \cite{B1989}.     More recently,  Ho  \cite{H2025}  established an existence result under the assumptions $\kap_{(P_g^4, P_g^{3,b})}=0$, $\operatorname{ker}\mathbb{P}_{g}^{4,3} \simeq \R$, and $\mathbb{P}_{g}^{4,3}$ is nonnegative.

		In this paper  we are interested in   the prescribed $T$-curvature problem on the four-dimensional unit ball, where the noncompact group of conformal transformations of the ball acts on the equation giving rise to Kazdan-Warner type obstructions, just as in the celebrated prescribed scalar curvature (or Nirenberg) problem; see, e.g., \cite{L1995-boundary}.
		More precisely, let $(\B^4,g_{\B^4})$ be the four-dimensional unit ball endowed  with the  Euclidean metric $g_{\B^4}$. For a  given  smooth positive function  $f$ defined on the boundary $\pa \B^4=\S^3$, we aim to find a metric $g$ conformal to $g_{\B^4}$ such that
		\be\label{maineq}
		\left\{\begin{aligned}
			&Q_g=0&&\mbox{ in }\,\B ^4, \\ &T_g=f && \mbox{ on }\, \S ^3, \\& H_g=0&& \mbox{ on }\,\S ^3.
		\end{aligned}\right.
		\ee 
		Here $Q_g$, $T_g$, and $H_g$  denote  the $Q$-curvature, the 
		$T$-curvature, and the mean curvature associated with $g$, respectively.

		Let $[g_{\B^4}]:=\{e^{2u}g_{\B^4}:u\in C^{\infty}(\B^4)\}$ denote the conformal class of $g_{\B^4}$. Set $g=e^{2u}g_{\B^4}\in [g_{\B^4}]$. Then we deduce from \eqref{conformal-3} that   problem \eqref{maineq} is equivalent to finding $u$ satisfying the following conditions:
		\be \label{maineq1}
		\left\{\begin{aligned}
			&P^4_{g_{\B ^4}}u=0&&\mbox{ in }\,\B ^4,\\
			&P^{3,b}_{g_{\B ^4}}u+T_{g_{\B ^4}}=fe^{3u}&&\mbox{ on }\,\S ^3,\\&\frac{\pa u}{\pa\nu_{g_{\B ^4}}}+H_{g_{\B ^4}}=0&&\mbox{ on }\,
			\S ^3.
		\end{aligned}\right.
		\ee 
		From the definitions in    \eqref{def:P_g^4}-\eqref{def:Q_g} and  \eqref{def:mean_curvature}-\eqref{def:T_g},  it is straightforward to verify that
		\be \label{Q_T_H}
		\left\{\begin{aligned}
			&Q_{g_{\B ^4}}=0&&\mbox{ in }\,\B ^4,\\
			&T_{g_{\B ^4}}=2&&\mbox{ on }\,\S ^3,\\
			&H_{g_{\B ^4}}=1 &&\mbox{ on }\,\S ^3,
		\end{aligned}\right.
		\ee 
		and
		\be \label{P^4&P^3}
		\left\{\begin{aligned}
			&P^4_{g_{\B ^4}}=\Delta_{g_{\B ^4}}^2&&\mbox{ in }\,\B ^4,\\
			& P^{3,b}_{g_{\B ^4}}=-\frac{1}{2}\frac{\pa  \Delta_{g_{\B ^4}}}{\pa \nu_{g_{\B ^4}}}
			-\Delta_{g_{\S ^3}}\frac{\pa }{\pa \nu_{g_{\B ^4}}}
			-\Delta_{g_{\S ^3}} &&\mbox{ on }\,\S ^3,
		\end{aligned}\right.
		\ee 
		where $\frac{\pa }{\pa\nu_{g_{\B ^4}}}  $ denotes the outward normal derivative with respect to the unit normal vector $\nu_{g_{\B ^4}}$.

				Problem  \eqref{maineq}  is remarkably flexible and difficult to solve for several reasons. A first difficulty is the absence of a homogeneous Neumann boundary condition in the boundary value formulation \eqref{maineq1}. Secondly, by integrating by parts, one readily obtains a simple necessary condition, namely $\max_{\S^3} f>0$.
				Beyond these, further obstructions arise. These include topological constraints, such as the one in \eqref{GBC-2}, as well as a Kazdan-Warner type identity\footnote{Indeed, we rely on the equivalence between   $P^{3,b}_{g}$ and the third-order Beckner operator $\cP^{3}_{\hat g}$ on $\S^3$, since  $P^{3,b}_g$ is not an intrinsic operator on $\S ^3$; see Section \ref{sec:2} for further discussion.} (see, e.g., \cite[p.205]{CY1995}) 
				\be\label{KW}
				\int_{\S^3}\la \nabla_{g_{\S^3}} f,\nabla_{g_{\S^3}}x_i\ra_{g_{\S^3}}\, \d s_{\hat g}=0,\quad 1\leq i\leq 4,
				\ee where 
				$(x_1,x_2,x_3,x_4)$ denote the restrictions of the standard coordinate functions on  $\R^4$ to $\S^3$. The main objective of this paper is to establish the following existence theorem, which provides a sufficient condition for the solvability of problem \eqref{maineq}.
				\begin{thm}\label{thm:main}
					Suppose that $f: \S ^3\to\R $ is a positive smooth function with only nondegenerate
					critical points  such that $\Delta_{g_{\S ^3}}f(a)\neq 0$
					at any critical  point $a$.
					For $i=0,\ldots,3$, let 
					\be\label{14}
					m_i:=\#\{a\in\S ^3: \nabla_{g_{\S ^3}}f(a)=0, \Delta_{g_{\S ^3}}f(a)<0, \morse(f,a)=3-i \},
					\ee
					where  $\morse(f,a)$ denotes the Morse index of $f$ at the  critical point $a$. 
					If   the following system 
					\be \label{eq:system}
					\left\{\begin{aligned}
						&	m_0=1+k_0,&& \\
						&m_i=k_{i-1}+k_i, && 1 \leq i \leq 3, \\
						&	k_3=0,&&
					\end{aligned}\right.
					\ee 
					has no solution with coefficients $k_i\geq 0$,  then problem \eqref{maineq} admits a solution.
				\end{thm}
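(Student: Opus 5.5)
We will argue by contradiction and follow the Malchiodi--Struwe flow strategy. First, we reduce \eqref{maineq1} to a problem on $\S^3$. Write $u$ as the biharmonic extension with $\pa_\nu u = -1$. Then the Dirichlet-to-Neumann structure of $(P^4_{g_{\B^4}},P^{3,b}_{g_{\B^4}})$ turns \eqref{maineq1} into $\cP^3 w + 2 = f e^{3w}$ on $\S^3$, where $\cP^3$ is the third-order Beckner operator and $w=u|_{\S^3}$. The associated functional is
\[
E_f(w)=\frac12\int_{\S^3} w\,\cP^3 w\,\d s+2\int_{\S^3}w\,\d s-\frac{4\pi^2}{3}\log\Big(\dashint_{\S^3} f e^{3w}\,\d s\Big),
\]
and by the Ache--Chang inequality (equivalently Beckner's inequality on $\S^3$) it is bounded below when $f\equiv$ const. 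We then introduce the normalized $T$-curvature flow
\[
\partial_t w=\alpha(t) f - T_{g(t)},
\]
with $\alpha(t)$ chosen so that the volume of $\S^3$ is preserved. Along this flow $E_f$ is nonincreasing, and $\int_0^\infty\int |\alpha f-T|^2e^{3w}<\infty$.

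The key steps are as follows.

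\emph{Step 1: global existence.} We prove that the flow exists for all time, using the energy bound and Beckner's inequality to control $w$ in $H^{3/2}$ up to conformal normalization.

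\emph{Step 2: concentration-compactness along the flow.} Normalize by Möbius transformations $\phi(t)$ with $\int x\,e^{3w\circ\phi}=0$. Then either the normalized $w$ stays bounded, in which case we obtain subsequential convergence to a solution, or the volume concentrates at a single point $a(t)\to a\in\S^3$. In the latter case we show, via the Kazdan--Warner identity \eqref{KW} and a blow-up analysis, that $a$ is a critical point of $f$ and that the flow shadows the ``critical points at infinity'' $\{(a,\cdot)\}$.

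\emph{Step 3: shadow flow expansion.} We expand the evolution of the concentration point $a(t)$ and concentration parameter $\varepsilon(t)$. In the form of Malchiodi--Struwe, this gives
\[
\dot a \approx c\,\nabla f(a)
\]
and an equation for $\varepsilon$ governed by $\Delta f(a)$. Concentration can then persist only at critical points with $\Delta f(a)<0$, whose index contribution at infinity is $3-\morse(f,a)$.

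\emph{Step 4: Morse-theoretic contradiction.} The sublevels $\{E_f\le L\}$ for $L$ large are contractible (the whole space), while for $L\to-\infty$ they retract, via the flow, onto a set homotopy-equivalent to $\S^3$ (barycenters), with cells attached at the critical points at infinity counted by $m_i$. If no true solution exists, the flow provides a deformation realizing the passage between these sublevels. The strong Morse inequalities then force a solution $k_i\ge0$ of \eqref{eq:system}; the ``$1+$'' in $m_0$ reflects the homology of the point versus $\S^3$, and $k_3=0$ reflects the top dimension. This contradicts the hypothesis, so a solution of \eqref{maineq} exists.

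The main obstacle is Step 3, together with its use in Step 4: we must derive precise expansions of the flow near the bubble manifold so that the flow lines can be used as a deformation, with Morse indices correctly attached. This requires sharp spectral estimates for the linearized operator $\cP^3 - 3\cdot 2$ on bubbles, whose kernel is spanned by the conformal directions.
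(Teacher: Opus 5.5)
Your overall architecture matches the paper's: reduction to the Beckner operator on $\S^3$, monotone energy, the Ache--Chang lower bound, M\"obius normalization, and a shadow flow with $\dot a\approx c\,\nabla f(a)$ and the scale driven by $\Delta f(a)$. But Step~4, which is exactly where \eqref{eq:system} has to come from, rests on a wrong topological picture.

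The inequality you invoke gives $E_f\ge-\frac{4\pi^2}{3}\log\max_{\S^3}f$ in your normalization, for \emph{every} positive $f$, not only constants. So $\{E_f\le L\}=\emptyset$ below that value. There is no regime $L\to-\infty$ and no retraction onto an $\S^3$ of barycenters; that picture belongs to the non-resonant range, where the functional is unbounded below.

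Your picture is in fact self-contradictory. Suppose the bottom sublevel were $\simeq\S^3$ and the contractible top sublevel were obtained from it by attaching cells of dimension $3-\morse(f,p)\le 3$. Then the generator of $H_3(\S^3)$ could never be killed, since that needs a $4$-cell. So the non-existence hypothesis would be refuted for every admissible $f$, which is false: $f=2+x_4$ satisfies the hypotheses but has no solution by \eqref{KW}. Equivalently, the pair $(\mathrm{pt},\S^3)$ has relative Poincar\'e polynomial $t^4$, not $1$, so you never obtain $m_0=1+k_0$.

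What is needed is the content of Proposition~\ref{prop:5.11}. Assume non-existence and set $\beta_i:=-\frac{4\pi^2}{3}\log f(p_i)$. Then:
(a) $\{E_f\le\beta\}$ is contractible for $\beta>\max_i\beta_i$. This is shown by flowing $(1-s)w_0+c(s,w_0)$ up to the first time its energy is $\le\beta$. It uses $E_f[w(t)]\to-\frac{4\pi^2}{3}\log f(Q)$ at a critical point $Q$ (Lemma~\ref{lem:5.10}) and continuity of this entry time.
(b) The flow gives homotopy equivalences between consecutive levels.
(c) Levels with $\Delta f(p_i)>0$ change nothing.
(d) At a level with $\Delta f(p_i)<0$ a cell of dimension $3-\morse(f,p_i)$ is attached. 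This requires the expansions of $\partial_sE_f$, $\partial_pE_f$, $\partial_vE_f$ near $(p_i,0)$ and coercivity transverse to the conformal directions (Lemmas~\ref{lem:5.12} and~\ref{lem:5.13}).

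Starting from $\emptyset$, the first cell is the $0$-cell at the global maximum of $f$. So the contractible top sublevel has a cell structure with $m_i$ cells of dimension $i$, and the strong Morse inequalities give $\sum_i m_it^i=1+(1+t)\sum_i k_it^i$. The $1$ is the Poincar\'e polynomial of a contractible space, and $k_3=0$ because there are no $4$-cells.

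Two smaller points. First, in Step~2 the dichotomy is misstated. After the center-of-mass normalization the normalized function is \emph{always} bounded (Lemma~\ref{lem:3.2}, from the improved inequality of Proposition~\ref{prop:3.1}), so its boundedness yields nothing. The real alternative is whether the normalizing M\"obius maps stay in a compact set, in which case the unnormalized $w$ is bounded and subconverges to a solution. Otherwise they degenerate, the normalized function tends to $0$, and the boundary volume concentrates at a point.

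Second, that the concentration point is a critical point of $f$ with $\Delta f\le 0$ comes from integrating the shadow-flow ODE, using $1-|p|^2\approx 12\varepsilon^2$ and $\varepsilon^2\gtrsim 1/t$. It does not come from the Kazdan--Warner identity alone. In the paper that identity only serves to compute the leading terms of the projections of $\alpha f-T$ onto $x_1,\dots,x_4$.
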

				
				\begin{rem}
					The existence result stated in Theorem \ref{thm:main} for the prescribed $T$-curvature problem on compact four-dimensional Riemannian manifolds with boundary was previously obtained by  Ndiaye \cite{N2024}. In this paper, we present an alternative proof based on a geometric flow method, in the spirit of the scheme developed by Malchiodi-Struwe \cite{MS2006}.
				\end{rem}

				As a consequence of the proof of Theorem   \ref{thm:main}, we have the following result.
				\begin{cor}\label{cor:1}
					Under the same assumptions in Theorem \ref{thm:main}. 
					If
					\be \label{13}
					\sum_{\{a\in\S ^3 :\,\nabla_{g_{\S ^3}}f(a)=0, \Delta_{g_{\S ^3}}f(a)<0\}}(-1)^{\morse (f,a)}\neq -1,
					\ee 
					then problem \eqref{maineq} admits a solution.  
				\end{cor}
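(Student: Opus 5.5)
We derive Corollary \ref{cor:1} from Theorem \ref{thm:main} by showing that hypothesis \eqref{13} forces the system \eqref{eq:system} to have no solution with nonnegative coefficients. The argument is by contraposition: if \eqref{eq:system} has a solution $(k_0,\dots,k_3)$ with $k_i\ge 0$, we will show that the alternating sum in \eqref{13} equals $-1$.

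Assume such a solution exists. We take the alternating sum of the equations with signs $(-1)^i$:
\[
\sum_{i=0}^3(-1)^i m_i=(1+k_0)-(k_0+k_1)+(k_1+k_2)-(k_2+k_3)=1-k_3=1,
\]
where the last equality uses $k_3=0$. The sum telescopes, so the nonnegativity of the $k_i$ plays no role in this step.

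Next we relate this sum to the left-hand side of \eqref{13}. By the definition \eqref{14}, a critical point $a$ with $\Delta_{g_{\S^3}}f(a)<0$ is counted in $m_i$ exactly when $\morse(f,a)=3-i$. It therefore contributes $(-1)^{\morse(f,a)}=(-1)^{3-i}=-(-1)^i$. Summing over all such critical points gives
\[
\sum_{\{a:\,\nabla_{g_{\S^3}} f(a)=0,\ \Delta_{g_{\S^3}} f(a)<0\}}(-1)^{\morse(f,a)}=-\sum_{i=0}^3(-1)^i m_i=-1.
\]
This contradicts \eqref{13}.

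Hence, under \eqref{13}, the system \eqref{eq:system} has no solution with $k_i\ge0$, and Theorem \ref{thm:main} yields a solution of \eqref{maineq}. No step presents a real obstacle. The only point needing care is the index shift $\morse(f,a)=3-i$, which produces the overall minus sign and is why the excluded value in \eqref{13} is $-1$ rather than $1$.
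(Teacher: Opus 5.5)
Your proposal is correct and is essentially the paper's argument. The paper substitutes $t=-1$ into the Morse polynomial identity \eqref{94}, whose coefficient comparison is exactly the system \eqref{eq:system}. That substitution is the same as your alternating sum $\sum_{i=0}^3(-1)^i m_i=1-k_3=1$, and the paper then applies the same index shift $\morse(f,a)=3-i$. The only cosmetic difference is that you argue by contraposition from the statement of Theorem \ref{thm:main} rather than from the identity produced inside its proof, which makes your deduction slightly more self-contained.
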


				Theorem \ref{thm:main} and Corollary \ref{cor:1} are  in the spirit of \cite{S2005,MS2006,XZ2016,H2012,CX2011}and are established using the so-called \emph{flow approach}, which dates back to the pioneering works of Struwe \cite{S2002} and Brendle \cite{B2003}.  
				Moreover, within the same framework, we establish exponential convergence of the
				$T$-curvature flow   \eqref{flow3} (defined in Section \ref{sec:2}) on 
				$\B^4$.  More precisely, starting from a  $Q$-flat and minimal metric conformal to the standard Euclidean metric, the flow converges exponentially fast to an extremal metric for the sharp Ache-Chang inequality \cite{AC2017}.
				
				We introduce the set  \[[g_{\B^4}]_0:= \{g\in [g_{\B^4}]:  Q_g=0 \text{ in }\,\B^4, H_{g}=0 \text{ on }\,\S^3, \vol(\S^3,\hat g)=2\pi^2\}.\]
				The exponential convergence result can  be stated as follows.
				\begin{thm}\label{thm:4.1}
					Suppose that $g_0\in [g_{\B ^4}]_0$, and let $f \equiv  2$.
					Then the $T$-curvature flow   \eqref{flow3} (defined in Section \ref{sec:2})  converges exponentially
					fast to a limiting metric $g_\infty=e^{2w_\infty} g_{\B ^4}$ satisfying  \be\label{thm:4.1-1}
					\left\{\begin{aligned}
						&Q_{g_{\infty}}=0 &&\mbox{ in } \, \B^4,\\
						&H_{g_{\infty}}=0&& \mbox{ on } \, \S^3,\\ &T_{g_{\infty}}=	2&&\mbox{ on } \, \S^3,
					\end{aligned}\right.
					\ee
					in the sense that
					$\|w(t)-w_\infty\|_{H^4(\B ^4, g_{\B ^4})}\leq C e^{-\delta t}$
					for some constants $C>0$ and $\delta>0$. Moreover, the limiting profile $w_\infty$ is explicitly characterized in the work of Ndiaye and Sun \cite{NS2024}.
				\end{thm}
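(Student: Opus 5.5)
We will work with the boundary reduction of the flow. Writing $g(t)=e^{2w(t)}g_{\B^4}\in[g_{\B^4}]_0$, the conditions $Q_{g}=0$ and $H_g=0$ determine $w$ in $\B^4$ from its trace $v=w|_{\S^3}$ through the biharmonic extension with Neumann condition $\pa_\nu w=-1$. On $\S^3$ the equation $P^{3,b}_{g_{\B^4}}w+2=T_g e^{3w}$ then becomes an equation for the Beckner-type operator $\cP^3$ acting on $v$, as described in Section \ref{sec:2}. With $f\equiv 2$ the flow \eqref{flow3} is the negative gradient flow, under the volume constraint $\vol(\S^3,\hat g)=2\pi^2$, of the functional
\[
E(v)=\tfrac12\la \cP^3 v,v\ra+2\int_{\S^3}v\,\d s_{g_{\S^3}}-\tfrac{2}{3}\cdot 2\pi^2\log\dashint_{\S^3}e^{3v}.
\]
By the Ache--Chang inequality \cite{AC2017}, $E$ is bounded below on this class. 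The plan is the standard Brendle-type scheme, in three steps.

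\textbf{Step 1 (energy identity).} We compute
\[
\frac{d}{dt}E(w(t))=-\int_{\S^3}(T_{g(t)}-\bar T)^2\,\d s_{\hat g(t)},
\]
where $\bar T$ is the average (normalized to $2$ by the volume constraint and Gauss--Bonnet--Chern \eqref{GBC-2}). Since $E$ is bounded below, this gives $\int_0^\infty\|T-2\|_{L^2}^2\,dt<\infty$.

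\textbf{Step 2 (no concentration, uniform bounds).} The key point is that for $f\equiv2$ we can use conformal invariance. Compose with a conformal automorphism $\phi_t$ of $\B^4$ so that the normalized metric $\phi_t^*g(t)$ has its center of mass $\int_{\S^3}x\,e^{3v}$ equal to $0$. For such normalized functions, the improved Aubin-type version of the Ache--Chang inequality makes $E$ coercive. This yields uniform $H^{3/2}(\S^3)$ bounds on the normalized $v$, and, via elliptic regularity for the biharmonic extension together with higher-order energy estimates for the flow, uniform $H^4(\B^4)$ bounds. We then show that the conformal factors $\phi_t$ stay in a compact set. For this we use the Kazdan--Warner identity \eqref{KW} together with the evolution of the center of mass, which moves only by an amount controlled by $\|T-2\|_{L^2}$; the time-integrability from Step 1 gives the needed control, as in \cite{MS2006,S2005}. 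Hence $w(t)$ is bounded in $H^4$, and along a sequence $t_k\to\infty$ it converges to $w_\infty$ with $T_{g_\infty}=2$. By the classification in \cite{NS2024}, $w_\infty$ is an explicit extremal of the Ache--Chang inequality.

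\textbf{Step 3 (exponential rate).} Around the critical manifold $\mathcal M$ of extremals (the orbit of the conformal group), we prove a Łojasiewicz-type inequality
\[
E(w)-E(w_\infty)\le C\|T-2\|_{L^2}^2 \quad\text{near } \mathcal M .
\]
This follows from the nondegeneracy of $\mathcal M$: the kernel of the linearized operator $\cP^3-3\cdot 2$ on $\S^3$ consists exactly of the first spherical harmonics, which are the tangent directions to $\mathcal M$. Combined with Step 1, this gives $\frac{d}{dt}(E-E_\infty)\le -c(E-E_\infty)$, so $E-E_\infty$ decays exponentially. Then $\int_t^\infty\|\pa_s w\|_{L^2}\,ds\le Ce^{-\delta t}$, which gives full convergence. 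Interpolating with the uniform higher-order bounds upgrades this to the $H^4$ estimate.

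The main obstacle is Step 2: excluding loss of compactness through the noncompact conformal group, that is, bubbling at a boundary point. I would first try the center-of-mass argument, exploiting that along a Möbius orbit the energy is constant, so drift along the orbit must be driven by $T-2$, whose $L^2$ norm is time-integrable.
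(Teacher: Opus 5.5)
The gap is in Step 2, at the point you identify as the main obstacle. Step 1 gives only $\int_0^\infty F_2\,\d t<\infty$ with $F_2=\|T_g-2\|^2_{L^2(\S^3,\hat g)}$, i.e.\ square-integrability in time of the speed. The motion of the normalizing conformal maps, however, is \emph{linear} in $T_g-2$: its generator $\hat\xi$ obeys $\|\hat\xi\|_{L^\infty}\le CF_2^{1/2}$ (Lemma \ref{35}). To control the displacement of $\Phi(t)$ in the noncompact conformal group by integrating its speed, you need $\int_0^\infty F_2^{1/2}\,\d t<\infty$, and this does not follow from Step 1.

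This is not a technicality. For nonconstant $f$ without solutions, \eqref{21} still holds, yet $p(t)\to\S^3$ by Proposition \ref{prop4.9}. So the normalizing maps can leave every compact set while the total dissipation is finite. Your observation that $E_f$ is constant along M\"obius orbits points the wrong way: drift along the orbit costs no energy, so dissipation alone cannot prevent it. Hence three things are unsupported as written: the compactness of $\phi_t$, the subsequential convergence $w(t_k)\to w_\infty$, and the fact that the flow stays near $\mathcal M$. Step 3 needs the last one for \emph{all} large $t$, not just along a sequence.

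What is missing is an integrable bound on the drift, obtained before any compactness in the unnormalized frame. The paper gets it from exponential decay of $F_2$. Lemma \ref{lem:3.8} gives $F_2'\le-(2+o(1))G_2+(12+o(1))F_2$. The Kazdan--Warner identity \eqref{4.9} shows that the components $b^i$ of $2-T_h$ along $x_1,\dots,x_4$ are $O(F_2^{1/2}\|\hat v\|_{H^1})=o(F_2^{1/2})$. Hence $G_2\ge(\Lam_5+o(1))F_2=(24+o(1))F_2$ and $F_2\le Ce^{-\delta t}$; only then is $\|\hat\xi\|_{L^\infty}\le CF_2^{1/2}$ integrable.

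Your Step 3 can do the same job if you reorder the argument. First stay in the normalized frame and show $\hat v(t)\to0$ for all $t$. Along $t_k$ with $F_2(t_k)\to0$, compactness, classification and the normalizations give $\hat v(t_k)\to0$. Hence $E_f[w(t)]\downarrow-\frac{16\pi^2}{3}\log2=:E_{\min}$, and Proposition \ref{prop:3.1} with $\vol(\S^3,\hat h)=2\pi^2$ gives $(1-a)\P^{4,3}_{g_{\B^4}}(v,v)\le E_f[w(t)]-E_{\min}\to0$. Next, note that your \L{}ojasiewicz inequality is invariant under the conformal group, since both $E_f$ and $F_2$ are when $f\equiv2$. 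So it holds uniformly near the whole orbit $\mathcal M$. This gives $E_f-E_{\min}\le Ce^{-ct}$ and, by Cauchy--Schwarz on unit time intervals, $\int_t^\infty F_2^{1/2}\,\d s\le Ce^{-\delta t}$, which is what controls $\Phi(t)$.

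Your center-of-mass idea also works once it is made quadratic. By \eqref{34} the boost part of $\hat\xi$ is governed by $b$. Kazdan--Warner together with $\|\hat v\|_{H^3}\lesssim F_2^{1/2}$ (as in Lemma \ref{lem:5.4} for constant $f$) gives $|b|\lesssim F_2$, which is integrable by Step 1.

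Repaired this way, your route genuinely differs from the paper's. It replaces the curvature evolution \eqref{38} and the cubic estimate \eqref{44} by a second-order expansion of $E_f$ and $T$ at $v=0$. Both rest on the same spectral input: $\ker(\cP^3_{g_{\S^3}}-6)=\operatorname{span}\{x_1,\dots,x_4\}$ (the M\"obius directions), the gap up to $\Lam_5=24$, and the removal of the constant and first modes of $\hat v$ by the volume and center-of-mass normalizations.
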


				We now describe the organization of the paper and briefly outline the main ideas of the proofs.
				
				One difficulty in studying problem \eqref{maineq} via the flow  approach is that  the $T$-curvature $T_g$ is defined only on $\S ^3$, whereas the boundary operator $P^{3,b}_g$ is not intrinsic to
				$\S^3$, as it depends on the embedding of $\S^3$ in $(\B^4,g)$.  This non-intrinsic nature complicates the analysis of the evolution of $T_g$.  To address this issue, we exploit the relationship between $P^{3,b}_{g}$ and the third-order Beckner operator $\cP^{3}_{\hat g}$ on $\S ^3$.  We would like also to remark that in general we do not know if there exists conformally covariant intrinsic operator of  order 3 on compact 3-manifolds.
				
				Another analytical difficulty arises from the absence of a homogeneous Neumann boundary condition.  To overcome this, we employ a special metric known as the \emph{adapted
					metric},  originally introduced by Case and Chang \cite{CC2016} in the context of compactifying conformally compact   Poincar\'e-Einstein manifolds. We denote this metric by $g_*$,  which belongs to  the conformal class of the flat metric on the Euclidean unit ball  and has a totally geodesic boundary. In the model case $(\B^4,\S^3)$,  an explicit formula for $g_*$ was provided by Ache-Chang \cite[Proposition 2.2]{AC2017}, namely  
				\be\label{adapted-metric}
				g_{*}=e^{2\rho(x)} g_{\B^4}, \quad \rho(x):=\frac{(1-|x|^2}{2},
				\ee which satisfies 
				\be\label{adapted-metric-property} 
				\left\{
				\begin{aligned}
					&Q_{g_*}=0&&\mbox{ in }\,\B ^4,\\
					&T_{g_*}=T_{g_{\B ^4}}&&\mbox{ on }\,\S ^3,\\
					&H_{g_*}=0 &&\mbox{ on }\,\S ^3,
				\end{aligned}
				\right.
				\ee 
				see \cite[Lemma 6.1]{AC2017}. 
				
				Let  $w:=u-\rho$. Then $g=e^{2u}g_{\B^4}=e^{2w}g_{*}$, and  equation  \eqref{maineq1} is equivalent to finding $w$ satisfying \be \label{maineq2}
				\left\{\begin{aligned}
					&P^4_{g_{*}}w=0&&\mbox{ in }\,\B ^4,\\
					&P^{3,b}_{g_{*}}w+T_{g_{*}}=fe^{3w}&&\mbox{ on }\,\S ^3,\\&\frac{\pa w}{\pa\nu_{g_{*}}}=0&&\mbox{ on }\,
					\S ^3.
				\end{aligned}\right.
				\ee 
				Moreover, using  \eqref{conformal-3}, \eqref{adapted-metric-property} and the  identity $\frac{\pa }{\pa \nu_{g_*}}=e^{-\rho(x)}\frac{\pa }{\pa \nu_{g_{\B ^4}}}$, we may rewrite \eqref{maineq2} as
				\be \label{maineq3}
				\left\{\begin{aligned}
					&P^4_{g_{\B^4}}w=0&&\mbox{ in }\,\B ^4,\\
					&P^{3,b}_{g_{\B^4}}w+T_{g_{\B^4}}=fe^{3w}&&\mbox{ on }\,\S ^3,\\&\frac{\pa w}{\pa\nu_{g_{\B^4}}}=0&&\mbox{ on }\,
					\S ^3,
				\end{aligned}\right.
				\ee
				which, in the flat metric  $g_{\B^4}$, takes the form of a system with a homogeneous Neumann boundary condition.  
				
		We introduce the space
				\[
				\H_g:=\Big\{ \phi\in H^2(\B ^4,g):\,\frac{\pa \phi}{\pa  \nu_{g}}=0\, \mbox{ on }\,\S^3\Big\},
				\]
				It is well known that problem \eqref{maineq3} admits   a variational
				structure. Its associated energy functional is given by
				\be\label{variation}
				E_f [w]:=E[w]-\frac{16\pi^2}{3}\log \Big(\dashint_{\S^3}fe^{3 \hat w}\, \d s _{g_{\S^3}}\Big)
				\ee
				for $w\in\H_{g_{\B^4}} \cap H^4(\B^4,g_{\B^4})$, where 
				\be\label{EW}
				E[w]:=\P_{g_{\B ^4}}^{4, 3}(w,w)+4\int_{\S^3} T_{g_{\B^4}}\hat w\, \d s _{g_{\S^3}}.
				\ee
				with   the bilinear form
				\be\label{P43}
				\P^{4, 3}_{g_{\B ^4}}(\phi, \psi):=\la  P^4_{g_{\B ^4}}\phi, \psi\ra _{L^2(\B ^4, g_{\B ^4})}+2\la  P^{3,b}_{g_{\B ^4}}\phi, \psi\ra _{L^2(\S ^3, g_{\S ^3})},
				\ee 
				for $\phi,\psi\in \H_{g_{\B^4}} \cap H^4(\B^4,g_{\B^4})$.
				By integration by parts, \eqref{P43} admits the representation
				\be\label{P43-1}
				\P^{4, 3}_{g_{\B ^4}}(\phi, \psi)=\int_{\B^4}\Delta_{g_{\B^4}}\phi \Delta_{g_{\B^4}}\psi \, \d v_{g_{\B^4}}+2\int_{\S ^3}
				\la \nabla_{g_{\S ^3}}\phi, \nabla_{g_{\S ^3}} \psi\ra _{g_{\S ^3}} \, \d s _{g_{\S ^3}},
				\ee
			which extends naturally to the whole space $\H_{g_{\B^4}}$.

				We now briefly outline the main strategy of the proofs of  Theorem \ref{thm:main} and Corollary \ref{cor:1}.  Let $u_0\in C^{\infty}(\B^4)$ 
				be such that the conformal metric $g_0:=e^{2 u_0} g_{\B^4}$ belongs to the normalized class $[g_{\B^4}]_0$.
				We consider the one-parameter family of conformal metrics $g(t):=e^{2u(t,\cdot)}g_{\B^4}\in[g_{\B^4}]_0$ evolving by
				\[
				\left\{\begin{aligned}
					&\frac{\pa u}{\pa t}=\al(t) f-T_{g(t)}&&\mbox{ on }\,\S ^3,\\
					&u(0,\cdot)=u_0&& \mbox{ in }\,\B ^4,
				\end{aligned}\right.
				\]
				where the normalization factor $\al(t)$ is determined by
				$\al(t) \int_{\S^3} f \,\d s_{\hat g(t)}=2 \pi^2$. This choice of the coefficient $\alpha(t)$ allows the boundary volume of $(\S^3, 
				\hat g(t))$  to remain independent of $t$. Using \eqref{maineq3}, we reformulate the flow in terms of the function $w$, and prove that the resulting flow \eqref{flow3} exists globally for all $t\ge 0$.  In Section  \ref{sec:4}, we perform the blow-up analysis to show  that one of the following two alternatives must occur: either the flow \eqref{flow3} converges  in $H^{4}(\B^4,g_{\B^4})$, or there will be a concentration phenomenon  and the associated normalized flow $v(t)$ (see \eqref{flow5}) converges to 0 in $H^{4}(\B^4,g_{\B^4})$ as $t \to +\infty$. In the latter case, the evolving boundary metric $\hat g(t)$ concentrates at a single point, while the corresponding normalized boundary metric becomes asymptotically round on $\S^3$. 
				This dichotomy constitutes the concentration--compactness phenomenon for the present flow \eqref{flow3}.

				With this concentration--compactness result at hand, we prove Theorem \ref{thm:main} by a contradiction argument. 
				Starting from Section \ref{sec:5}, we assume that the flow  \eqref{flow3} does not converge; in particular,  $f$ is not realized along the flow. 
				We then show that, in the divergent case, the flow exhibits the following asymptotic behavior as $t\to+\infty$:
				i). the boundary metric $\hat g(t)$ concentrates at critical points $Q$ of $f$ where $\Delta_{\S^3} f(Q)<0$; ii). the energy functional $E_f[w(t)] \to -\frac{16\pi^2}{3}\log f(Q)$, where $w(t)$ is a solution of the flow \eqref{flow3}. Finally, in Section \ref{sec:7}, we complete the proof of Theorem \ref{thm:main} via a Morse-theoretic contradiction argument, which proceeds through the following steps.
				
				\textbf{Step 1.} We introduce the sublevel sets of the energy functional $E_f$, denoted by $E_f^{\beta}$ (see \eqref{sublevelset}). 
				Using the blow-up analysis and the asymptotic expansions obtained in Sections \ref{sec:2}--\ref{sec:6}, we characterize the changes in the topology of $E_f^{\beta}$ near the critical points of $f$, leading to Proposition \ref{prop:5.11}.

				\textbf{Step 2.} Proposition \ref{prop:5.11} implies that $N_0:=E_f^{\beta_0}$ is contractible for a suitable choice of $\beta_0$. 
				Moreover, the flow induces a homotopy equivalence between $N_0$ and a limiting space $N_{\infty}$, whose homotopy type is that of a point with cells of dimension $3-\morse(f,p)$ attached for each critical point $p$ of $f$ satisfying $\Delta_{\S^3}f(p)<0$.

				\textbf{Step 3.} Applying standard Morse theory, we obtain the Morse polynomial identity
				\be\label{94-1}
				\sum_{i=0}^3 t^i m_i
				=
				1+(1+t)\sum_{i=0}^3 t^i k_i,
				\ee 
				where $m_i$ is defined in \eqref{14} and   $k_i\ge 0$. 
				Comparing coefficients on both sides of \eqref{94-1} yields a nontrivial solution to the system \eqref{eq:system}, contradicting the hypothesis of Theorem \ref{thm:main}. 
				This completes the proof of Theorem \ref{thm:main}. 
				Corollary \ref{cor:1} follows immediately from \eqref{94-1}.

				The remainder of the paper is organized as follows. In Section \ref{sec:2}, we introduce the flow equation and
				study its elementary properties. Asymptotic  behavior  of  a  normalized flow  is discussed in Section \ref{sec:3} and
				the blow-up analysis is performed   in Section \ref{sec:4}. Then we derive the spectral decomposition  and complete the proof of Theorem \ref{thm:4.1} in Section \ref{sec:5}. 
				The shadow flow analysis is presented in Section \ref{sec:6}. Finally, Theorem \ref{thm:main} and  
				Corollary \ref{cor:1}  are proved in Section \ref{sec:7}.  
				
				\medskip
				
				\noindent\textbf{Notations.}\quad 
				The main notations used throughout the paper are listed as follows: 
				\begin{itemize} 
					\item $\N =\{1,2,\ldots\}$ denotes the set of positive integers, and 	$\N _0=\N \cup \{0\}$. We also set $\R^+=(0,+\infty)$.
					\item  The symbol $C>0$ denotes a generic positive constant which can vary from different occurrences or equations. 
					and   $C(\al, \beta, \ldots)$ means that the positive constant $C$ depends on $\al, \beta, \ldots$.
					\item For a Riemannian manifold $(M,g)$ with boundary $\pa M$, the induced metric on $\pa M$ is denoted by $\hat g$. 
					For a function $h$ defined on $M$, we write $\hat h:=h|_{\pa M}$ for its restriction to the boundary (e.g., $\pa\B^4=\S^3$).
					\item $\frac{\pa }{\pa \nu_g}$ denotes the outward normal derivative on the boundary $\pa M$ of a Riemannian manifold $(M,g)$.  
					
					\item For $1 \leq p<\infty$, $L^p(M, g)$ denotes the usual Lebesgue space consisting of measurable functions $h$ on $M$ such that $\int_{M}|h|^p\, \d v_g<+\infty$. When $p=\infty$, $L^{\infty}(M, g)$ denotes the space of essentially bounded measurable functions on $M$.		
					
					\item For $k \in \N_0$ and $1 \leq p<\infty$, the Sobolev space $W^{k, p}(M, g)$ is defined by \[W^{k, p}(M, g):=\left\{u \in L^p(M, g): \nabla^j u \in L^p(M, g) \text{ for all } 0 \leq j \leq k\right\},\] where $\nabla^j u$ denotes the $j$-th covariant derivative of $u$ with respect to $g$.	If $p=2$, we write $H^k(M, g):=W^{k, 2}(M, g)$ for simplicity.
					\item  For a Riemannian manifold $(M,g)$, we use $\nabla_g$ and $\Delta_g$ to denote the covariant derivative and the Laplace-Beltrami operator, respectively, and we use $\nabla$ and $\Delta$ to denote the usual gradient and Laplacian in the Euclidean space.	 
					\item For $h\in L^1(M,g)$,  we denote its average over $M$ by $\dashint_{M}h\, \d v_g:=\frac{1}{\vol(M,g)}\int_{M}h\, \d v_g$.
					
					\item  	For $p\in\S^3$, we write $B_r(p):=\{x\in\S^3: \operatorname{dist}_{g_{\S^3}}(x,p)<r\}$ for the geodesic ball in $(\S^3,g_{\S^3})$. For $x\in\R^3$, we denote the Euclidean ball by $B(x,r):=\{y\in\R^3: |y-x|<r\}$. 
					\item	$\morse (h,a)$ represents the Morse index of a function $h$ at its critical point $a$. 
					
					\item 	For a set $A$, the notation $\#A$ denotes its cardinality.
					\item  	 We use the standard asymptotic
					notation: $g = O(f)$ means that $|g/f|\leq C$ for some constant $C > 0$, while $g = o(f)$ means that
					$g/f \to 0$.	
				\end{itemize}

				\section{The flow equation}\label{sec:2}

				Let $f$ be a smooth positive function on $\S^3$  and set $0<m_f:=\inf_{\S^3}  f \leq  M_f:=\sup_{\S^3}f $.
				Motivated by the works of Struwe \cite{S2005}, Brendle \cite{B2003,B2002}  and Malchiodi--Struwe \cite{MS2006},  we consider the following  evolution of conformal metrics $g(t)$ for $t\geq 0$:
				\be\label{flow}
				\left\{
				\begin{aligned}
					&\frac{\pa g}{\pa t}=(\al(t)f -T_g)g && \text{ on }\,\S^3 \mbox{ for all }\,t\geq  0,\\&g(0)=e^{2u_0}g_{\B^4},&&
				\end{aligned}
				\right.
				\ee
				together with the constraints
				\be\label{QHt=0}
				\left\{
				\begin{aligned}
					&Q_g=0&&\mbox{ in }\,\B ^4  \mbox{ for all }\,t\geq  0,
					\\&H_g=0&&\mbox{ on }\,\S ^3\mbox{ for all }\,t\geq  0.\\
				\end{aligned}
				\right.
				\ee 
				Here $\al(t)$ is a real-valued function defined for $t\geq 0$ and $u_0\in C^{\infty}(\B^4)$. 
				Throughout the paper, unless otherwise specified, $T_g$, $Q_g$ and $H_g$  denote respectively the $T$-curvature on $\S^3$, $Q$-curvature on $\B^4$ and mean curvature on $\S^3$,  all computed with respect to the evolving metric $g(t)$.  
				For technical reasons, we assume that the initial metric $g_0:=g(0)$ satisfies
				\be\label{16}
				\vol(\S^3,\hat g_0)=\vol(\S^3, g_{\S^3})=2\pi^2.
				\ee

				Since the flow \eqref{flow} preserves the conformal structure of $(\B^4,g_{\B^4})$, the evolving metric can be written in the form $g(t)=e^{2u(t)}g_{\B^4}$, where
				$u(t)\in C^{\infty}(\B^4)$ satisfies the initial condition $u(0)=u_0$. Here and throughout, we adopt the notation
				$u(t)(x):=u(t,x)$ for all $x\in \B^4$. In terms of the conformal factor $u$, the
				flow \eqref{flow} reduces to the boundary evolution equation
				\be\label{17}
				\left\{
				\begin{aligned}
					&\pa_t  u=(\al(t)f -T_g) &&\text{ on }\,\S^3 \mbox{ for all }\,t\geq  0,\\
					&u(0)=u_0,&&
				\end{aligned}
				\right.
				\ee
				where   $T_g$ is the $T$-curvature of $g(t)$.  By the conformal transformation law of
				$T$-curvature in \eqref{conformal-3}, we have
				\be\label{18}
				T_g=e^{-3u}(P_{g_{\B^4}}^{3,b}u+T_{g_{\B^4}}).
				\ee
				
				For convenience, we choose the factor $\al(t)$ such that the boundary volume (area)
				of $\S^3$ with respect to the conformal metric $\hat g(t)$ remains constant along the flow \eqref{flow}-\eqref{20}.  More precisely, we impose
				\[
				0=\frac{\d }{\d t}\Big(\int_{\S^3} e^{3\hat u}\, \d s _{ g_{\S^3}}\Big)=3\al(t)\int_{\S^3}  fe^{3\hat u}\, \d s _{ g_{\S^3}}-3\int_{\S^3}T_g e^{3\hat u} \, \d s _{ g_{\S^3}}
				\]
				for all $t\geq 0$.   Thus,  a natural choice is
				\be\label{alpha_eq}
				\al(t)=\frac{\int_{\S^3}T_g \, \d s _{\hat g}}{\int_{\S^3} f \, \d s _{\hat g}}\quad \text{ for all }\, t\geq 0.
				\ee
				In particular, it follows from \eqref{16} that
				\be\label{20}
				\vol(\S^3,\hat g)=2\pi^2\quad \text{ for all }\, t\geq 0.	
				\ee
				
				Since $(\B ^4,g)$ is conformally flat, the Weyl tensor of $g$ vanishes, i.e., $W_g\equiv 0$.
				Together with \eqref{GBC-2} and the condition $Q_g=0$ in \eqref{QHt=0}, we obtain
				\[
				\int_{\S ^3}(T_g+Z_g)\, \d s _{\hat g} =4\pi^2\chi(\B ^4)=4\pi^2
				\]
				along the flow \eqref{flow}-\eqref{20}. On the other hand, by \eqref{QHt=0}, the conformal invariance  of \eqref{kap-sum}, and the fact that $\kap _{(P^4_{g_{\B ^4}}, P^{3,b}_{g_{\B ^4}})}=4\pi^2$, we can further deduce that
				\be\label{11}
				\int_{\S ^3}T_g\, \d s _{\hat g} =4\pi^2.
				\ee 
				Combining \eqref{11}  with \eqref{alpha_eq},  we  conclude that
				\be \label{19}
				\al(t) \int_{\S ^3}f\, \d s _{\hat g}=\int_{\S ^3}T_g\, \d s _{\hat g}=4\pi^2\quad \text{ for all }\, t\geq 0.
				\ee

				From \eqref{20} and  \eqref{19},
				we obtain uniform upper and lower bounds for $\al $:
				\be \label{42}
				\frac{2}{M_f} \leq\al \leq\frac{2}{m_f}\quad \text{ for all }\, t\geq 0.
				\ee 
				Differentiating \eqref{19} with respect to $t$  and using \eqref{17}, we derive that 
				\be \label{37}
				\al _t\int_{\S ^3}f\, \d s _{\hat g}+3\al \int_{\S ^3}f(\al  f-T_g)\, \d s _{\hat g}=0,
				\ee
				where $\al _t:=\frac{\d \al}{\d t} $.
				Combining \eqref{37} with \eqref{19}, we further obtain
				\be \label{alpha_t}
				\int_{\S ^3}f(T_g-\al  f)\, \d s _{\hat g}=\frac{4\pi^2}{3}\cdot\frac{\al _t}{\al ^2}.
				\ee 
				
				It follows from \eqref{conformal-3} and \eqref{Q_T_H} that the flow \eqref{flow}-\eqref{20} can be equivalently rewritten in terms of the conformal factor $u$ as
				\be \label{flow2}
				\left\{
				\begin{aligned}
					&P^4_{g_{\B ^4}}u=0&&\mbox{ in }\,\B ^4\mbox{ for all }\,t\geq  0,\\
					&\pa _t u=\al  f-T_g=\al  f-e^{-3u}(P^{3,b}_{g_{\B ^4}}u+T_{g_{\B ^4}})&&\mbox{ on }\,\S ^3\mbox{ for all }\,t\geq  0,\\
					& \frac{\pa  u}{\pa \nu_{g_{\B ^4}}}+H_{g_{\B ^4}}=0&&\mbox{ on }\,\S ^3\mbox{ for all }\,t\geq  0,\\
					&u|_{t=0}=u_0&& \mbox{ in }\,\B ^4.
				\end{aligned}
				\right.
				\ee

				Recall that the adapted metric is given by  $g_*=e^{2\rho(x)}g_{\B^4}$ with $\rho(x)=(1-|x|^2)/2$,  which satisfies the properties in \eqref{adapted-metric-property}. Let $w(t)(x):=u(t)(x)-\rho(x)$, then  $g(t)=e^{2w(t)}g_{*}$. Following the same derivation as in  \eqref{maineq3},  the system  \eqref{flow2} can be rewritten as a flow with homogeneous Neumann boundary condition:
				\be \label{flow3}
				\left\{
				\begin{aligned}
					&P^4_{g_{\B ^4}}w=0&&\mbox{ in }\,\B ^4\mbox{ for all }\,t\geq  0,\\
					&\pa _t w=\al  f-T_g=\al  f-e^{-3w}(P^{3,b}_{g_{\B ^4}}w+T_{g_{\B ^4}})&&\mbox{ on }\,\S ^3\mbox{ for all }\,t\geq  0,\\
					& \frac{\pa  w}{\pa \nu_{g_{\B ^4}}}=0&&\mbox{ on }\,\S ^3\mbox{ for all }\,t\geq  0,\\
					&w|_{t=0}=w_0&& \mbox{ in }\,\B ^4,
				\end{aligned}
				\right.
				\ee 
				where $w_0(x)=u_{0}(x)-\rho(x)$.

				Let $\{\lam_k= k(k+2): k \in \N_0 \}$ be the eigenvalues of $-\Delta_{g_{\S^3}}$. The eigenspace  corresponding to $\lam_k$ has finite dimension $N_k$ and is spanned by spherical harmonics $Y_k^{\ell}$ of degree $k$, where $\ell=1, \ldots, N_k$ (see, e.g., \cite{S1970}). We normalize them such  that $\|Y_k^{\ell}\|_{L^2(\S^3,g_{\S^3})}=1$. The spherical harmonics $\{Y_k^{\ell}\}$ form an orthonormal basis of the Hilbert space $L^2(\S^3,g_{\S^3})$. In particular, given $u \in L^2(\S^3,g_{\S^3})$ we can write
				\be\label{expressionu}
				u=\sum_{k=0}^{\infty} \sum_{\ell=1}^{N_k} u_k^{\ell} Y_k^{\ell}, \quad u_k^{\ell} \in \R,
				\ee
				and $\|u\|_{L^2(\S^3,g_{\S^3})}^2=\sum_{k, \ell}(u_k^{\ell})^2$.

				One can define a third-order Beckner operator $\cP_{g_{\S^3}}^3$ as follows (see, e.g., \cite{B1993,B1995,B1987,CQ1997-1}). Given $u \in L^2(\S^3,g_{\S^3})$ with spherical harmonics expansion as in \eqref{expressionu}, such that
				\[
				\|u\|_{H^3(\S^3,g_{\S^3})}^2:=\|u\|_{L^2(\S^3,g_{\S^3})}^2+\sum_{k=1}^{\infty} \sum_{\ell=1}^{N_k}|u_k^{\ell}|^2(\lam_k+1) \lam_k^2<+\infty,
				\]
				we define
				\[
				\cP_{g_{\S^3}}^3 u:=\sum_{k=0}^{\infty}(\lam_k+1)^{\frac{1}{2}} \lam_k \sum_{\ell=1}^{N_k} u_k^{\ell} Y_k^{\ell} .
				\]
				Notice that on $H^3(\S^3,g_{\S^3})$ the operator $\cP_{g_{\S^3}}^3$ coincides with the operator $P_{g_{\S^3}}^3=(-\Delta_{g_{\S^3}}+1)^{\frac{1}{2}}(-\Delta_{\S^3})$ given by \eqref{operatorP3}, where the operator $(-\Delta_{g_{\S^3}}+1)^{\frac{1}{2}}$ is  understood in terms of spectral decomposition of the Laplace-Beltrami operator:
				\[
				(-\Delta_{g_{\S^3}}+1)^{\frac{1}{2}} u=\sum_{k=0}^{\infty} \sqrt{\lambda_k+1} \sum_{\ell=1}^{N_k} u_k^{\ell} Y_k^{\ell} \quad \,\text { for $u$  as in \eqref{expressionu}.}
				\]
				Therefore, $\cP_{g_{\S^3}}^3$ is the well-known intertwining operator on $\S^3$ (see, e.g., \cite{B1987}).
				
				Define the space
				\[
				H^{\frac{3}{2}}(\S^3,g_{\S^3}):=\Big\{u=\sum_{k=0}^{\infty} \sum_{\ell=1}^{N_k} u_k^{\ell} Y_k^{\ell} \in L^2(\S^3,g_{\S^3}): \sum_{k=0}^{\infty}(\lam_k+1)^{\frac{1}{2}} \lam_k \sum_{\ell=1}^{N_k}|u_k^{\ell}|^2<+\infty\Big\},
				\]
				endowed with the seminorm
				\[
				\|u\|_{\dot{H}^{3 / 2}}^2:=\sum_{k=0}^{\infty}\left(\lambda_k+1\right)^{\frac{1}{2}} \lambda_k \sum_{\ell=1}^{N_k}\left|u_k^{\ell}\right|^2
				\]
				and the norm
				\[
				\|u\|_{H^{3 / 2}}^2:=\|u\|_{L^2}^2+\|u\|_{\dot{H}^{3 / 2}}^2 .
				\]
				
				Since the operator $\cP_{g_{\S^3}}^3$ is self-adjoint and non-negative, one can define its square root $(\cP_{g_{\S^3}}^3)^{\frac{1}{2}}$ for functions $u \in H^{\frac{3}{2}}(\S^3,g_{\S^3})$ by
				\[
				(\cP_{g_{\S^3}}^3)^{\frac{1}{2}} u:=\sum_{k=1}^{\infty}(\lam_k \sqrt{1+\lam_k})^{\frac{1}{2}} \sum_{\ell=1}^{N_k} u_k^{\ell} Y_k^{\ell} .
				\]
		It is worth noting that $\|(\cP_{g_{\S^3}}^3)^{\frac{1}{2}} u\|_{L^2}=\|u\|_{\dot{H}^{3 / 2}}$. 
				
				Chang and Qing \cite{CQ1997-2} showed that  on  $(\B^4,\S^3,g_{\B^4})$, there exists a close relationship
				between the  boundary operator $P^{3,b}_{g_{\B^4}}$ and the Beckner operator $\cP^{3}_{g_{\S ^3}}$. More precisely,  if $w$ satisfies $P^4_{g_{\B^4}}w=0$ in $\B^4$ and we set
				$\hat w=w|_{\S^3}$, then by \cite[Lemma 6.2]{AC2017}  (see also Lemma 3.3, Corollary 3.1 in \cite{CQ1997-2}) one has
				\be \label{Beckner-3-equal}
				\cP^{3}_{g_{\S ^3}}\hat w=P^{3,b}_{g_{\B ^4}}w\quad \text{ on }\, \S^3.
				\ee 
				Since  $Q^3_{g_{\S ^3}}=T_{g_{\B ^4}}=2$ on  $\S^3$,  it follows from \eqref{Q-curvature-eq}, \eqref{conformal-3} and \eqref{Beckner-3-equal} that 
				\be \label{samecurvature}
				T_g=Q^3_{\hat g}\quad \text{ for all }\,g\in [g_{\B^4}],
				\ee 
				where $Q^3_{\hat g}$ is the $Q$-curvature associated with the  Beckner operator $\cP^{3}_{\hat g}$ on $(\S^3,\hat g)$.

				Note that  for any given $\tilde v\in C^{\infty}(\S ^3)$, there exists a unique function $\tilde w\in C^{\infty}(\B ^4)$ solving
				\[
				\left\{
				\begin{aligned}
					&P^4_{g_{\B ^4}}\tilde w=0&&\mbox{ in }\,\B ^4,\\
					&\frac{\pa  \tilde w}{\pa \nu_{g_{\B ^4}}}=0&&\mbox{ on }\,\S ^3,\\
					&\tilde w=\tilde v &&\mbox{ on }\,\S ^3.
				\end{aligned}
				\right.
				\]
				As a result, the system \eqref{flow3} is equivalent to the following evolution equation on  $\S^3$:
				\be \label{flow4}
				\left\{
				\begin{aligned}
					&\pa _t \hat w=\al  f-Q^3_{\hat g}=\al  f-e^{-3\hat w}(\cP^3_{g_{\S ^3}}\hat w+Q^3_{g_{\S ^3}})&&\mbox{ on }\,\S ^3\mbox{ for all }\,t\geq  0,\\
					&\hat w|_{t=0}=\hat w_0&&\mbox{ on }\,\S ^3,
				\end{aligned}
				\right.
				\ee 
				where  $\hat w_0=w_0|_{\S ^3}$, $\hat g=e^{2\hat w}g_{\S ^3}$ and $\al$ satisfies  $\al \int_{\S ^3}f \,\d s_{\hat g}=\int_{\S ^3} Q^3_{\hat g} \,\d s_{\hat g}$.

				Thanks to the equivalence between the flow systems \eqref{flow}-\eqref{20}, \eqref{flow2}, \eqref{flow3}, and \eqref{flow4}, 
				and in particular the homogeneous Neumann boundary condition in \eqref{flow3}, we will carry out computations by freely passing between these formulations whenever convenient. Moreover, as in \cite{MS2006,H2012,CX2011}, the global existence and uniqueness of smooth solutions to the flow \eqref{flow4} can be established by the same argument as in \cite{B2003}. Consequently, we obtain global existence and uniqueness of smooth solutions for the flows \eqref{flow}-\eqref{20}, \eqref{flow2}, \eqref{flow3}, and \eqref{flow4}.

				The following lemma shows that the energy functional $E_f  [w]$, defined in  \eqref{variation}, is non-increasing along
				the flow  \eqref{flow3}.
				\begin{lem}\label{lem:2.1}
					Let $w$ be any smooth solution to the flow  \eqref{flow3}. Then 
					\[
					\frac{\d}{\d t}E_f [w]=-4\int_{\S ^3}(\al  f-T_g)^2\, \d s _{\hat g}.
					\]
					In particular,  $E_f [w]$ is non-increasing along the flow.
				\end{lem}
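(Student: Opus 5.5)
We differentiate $E_f[w(t)]$ directly. The symmetry of the bilinear form $\P^{4,3}_{g_{\B^4}}$ on $\H_{g_{\B^4}}$ handles the quadratic part, and the flow equation \eqref{flow3} together with the normalization \eqref{19} handles the logarithmic part.

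\textbf{Quadratic part.} Since $w(t)$ is smooth in $(t,x)$ and satisfies $\pa w/\pa\nu_{g_{\B^4}}=0$ for every $t$, the derivative $\pa_t w$ also lies in $\H_{g_{\B^4}}\cap H^4$. By symmetry of $\P^{4,3}_{g_{\B^4}}$ (visible from \eqref{P43-1}) and the definition \eqref{P43},
\[
\frac{\d}{\d t}\P^{4,3}_{g_{\B^4}}(w,w)=2\P^{4,3}_{g_{\B^4}}(w,\pa_t w)=2\int_{\B^4}P^4_{g_{\B^4}}w\,\pa_t w\,\d v_{g_{\B^4}}+4\int_{\S^3}P^{3,b}_{g_{\B^4}}w\,\pa_t\hat w\,\d s_{g_{\S^3}}.
\]
The interior term vanishes because $P^4_{g_{\B^4}}w=0$. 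The point needing care is the integration by parts behind \eqref{P43}. The representation \eqref{P43-1} was obtained using the Neumann condition on both entries, so it applies with $\psi=\pa_t w$. Adding the linear term $4\int_{\S^3}T_{g_{\B^4}}\pa_t\hat w$, we get
\[
\frac{\d}{\d t}E[w]=4\int_{\S^3}\big(P^{3,b}_{g_{\B^4}}w+T_{g_{\B^4}}\big)\pa_t\hat w\,\d s_{g_{\S^3}}=4\int_{\S^3}T_g\,\pa_t\hat w\,\d s_{\hat g}.
\]
The last equality uses $P^{3,b}_{g_{\B^4}}w+T_{g_{\B^4}}=T_g e^{3\hat w}$, which is the second line of \eqref{flow3}, together with $\d s_{\hat g}=e^{3\hat w}\d s_{g_{\S^3}}$. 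Here $\hat g=e^{2\hat w}g_{\S^3}$ by the normalization in which $\rho=0$ on $\S^3$.

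\textbf{Logarithmic part.} Differentiating the log term gives
\[
\frac{\d}{\d t}\Big[\frac{16\pi^2}{3}\log\dashint_{\S^3} fe^{3\hat w}\Big]=16\pi^2\,\frac{\int_{\S^3} f\,\pa_t\hat w\,\d s_{\hat g}}{\int_{\S^3} f\,\d s_{\hat g}}=4\int_{\S^3}\al f\,\pa_t\hat w\,\d s_{\hat g},
\]
where the second equality is \eqref{19}: $\al\int f\,\d s_{\hat g}=4\pi^2$.

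\textbf{Combining.} Subtracting the two pieces and inserting $\pa_t\hat w=\al f-T_g$ yields
\[
\frac{\d}{\d t}E_f[w]=4\int_{\S^3}(T_g-\al f)(\al f-T_g)\,\d s_{\hat g}=-4\int_{\S^3}(\al f-T_g)^2\,\d s_{\hat g}\le 0.
\]

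The only real obstacle is justifying the symmetry and boundary handling of $\P^{4,3}_{g_{\B^4}}$. Concretely, one must check that \eqref{P43} agrees with \eqref{P43-1} when both arguments satisfy the Neumann condition, and that the boundary term produced by $\int\Delta^2 w\,\psi$ combines with $2\int P^{3,b}w\,\psi$ exactly into the tangential gradient term. This is the integration by parts already asserted after \eqref{P43}, so it can be invoked directly. The smoothness in time needed to differentiate under the integral comes from the global smooth existence stated above.
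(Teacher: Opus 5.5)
Your proposal is correct and follows the paper's proof essentially line by line. You differentiate using the symmetry of $\P^{4,3}_{g_{\B^4}}$, drop the interior term via $P^4_{g_{\B^4}}w=0$, and rewrite $P^{3,b}_{g_{\B^4}}w+T_{g_{\B^4}}=T_g e^{3\hat w}$. You convert the logarithmic term with $\al\int_{\S^3}f\,\d s_{\hat g}=4\pi^2$ before substituting $\pa_t\hat w=\al f-T_g$. Your added justification that $\pa_t w$ stays in $\H_{g_{\B^4}}$, so that \eqref{P43-1} and the symmetry apply, is a detail the paper leaves implicit.
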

				\begin{proof}
					Since $\P^{4, 3}_{g_{\B ^4}}$ is a symmetric bilinear form, we have
					\[
					\frac{\d}{\d t}E_f [w]=2\P^{4, 3}_{g_{\B ^4}}(w, \pa _t  w)+4\int_{\S ^3}T_{g_{\B ^4}}\pa _t \hat w\, \d s _{ g_{\S^3}}-16\pi^2\frac{\int_{\S ^3}fe^{3\hat w}\pa _t \hat w\, \d s _{ g_{\S ^3}}}{\int_{\S ^3}fe^{3\hat w}\, \d s _{ g_{\S ^3}}}.
					\]
					Using  \eqref{P43}, \eqref{alpha_eq}, \eqref{19} and \eqref{flow3}, we deduce
					\begin{align*}
						\frac{\d}{\d t}E_f [w]=&\,
						2\la  P^4_{g_{\B ^4}}w, \pa _t w\ra _{L^2(\B ^4, g_{\B ^4})}+4\la  P^{3,b}_{g_{\B ^4}}w, \pa _t \hat w\ra _{L^2(\S ^3,  g_{\S ^3})}\\
						&\,+4\int_{\S ^3}T_{g_{\B ^4}}\pa _t \hat w\, \d s _{ g_{\S^3}}-16\pi^2\frac{\int_{\S ^3}f \pa _t \hat w\, \d s _{\hat g} }{\int_{\S ^3}f \, \d s _{\hat g} }\\
						=&\,4\int_{\S ^3}(P^{3,b}_{g_{\B ^4}}w+T_{g_{\B ^4}})\pa _t \hat w\, \d s _{ g_{\S^3}}
						-4\al \int_{\S ^3}f \pa _t \hat w\, \d s _{\hat g}\\
						=&\,4\int_{\S ^3}(T_g-\al  f) \pa _t \hat w\, \d s _{ \hat g}
						=-4\int_{\S ^3}(\al  f-T_g)^2\, \d s _{ \hat g},
					\end{align*}
					which completes the proof.
				\end{proof}
				
				We now establish a uniform bound for the energy functional  $E_f[w]$  along the flow  \eqref{flow3}, using a Lebedev-Milin  type inequality due to  Ache-Chang \cite[Theorem B]{AC2017}.
				\begin{cor}\label{cor:E_f w_upperbdd}
					Let $w$ be any smooth solution to the flow   \eqref{flow3}. Then  
					\[-\frac{16\pi^2}{3}\log  M_f \leq E_f [w]\leq E_f [w_0]<+\infty,\]where $M_f:=\max_{\S ^3} f $ and $w_0\in H^4(\B^4,g_{\B^4})$. 
				\end{cor}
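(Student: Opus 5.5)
The upper bound comes from the monotonicity in Lemma \ref{lem:2.1}, and the lower bound from the Ache--Chang inequality \cite[Theorem B]{AC2017} combined with $f\le M_f$.

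\textbf{Upper bound.} By Lemma \ref{lem:2.1}, $t\mapsto E_f[w(t)]$ is non-increasing, so $E_f[w(t)]\le E_f[w_0]$ for all $t\ge 0$. Finiteness of $E_f[w_0]$ holds for $w_0\in H^4(\B^4,g_{\B^4})$. Indeed, $\P^{4,3}_{g_{\B^4}}(w_0,w_0)$ is controlled by the $H^2$ norm via \eqref{P43-1} and the trace theorem. The linear term $4\int_{\S^3}T_{g_{\B^4}}\hat w_0$ is finite. Since $f>0$ and $\hat w_0$ is bounded by Sobolev embedding, $\dashint_{\S^3} f e^{3\hat w_0}$ is positive and finite, so the logarithm is finite.

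\textbf{Lower bound.} Use $f\le M_f$ to write
\[
\log\Big(\dashint_{\S^3} f e^{3\hat w}\,\d s_{g_{\S^3}}\Big)\le \log M_f+\log\Big(\dashint_{\S^3} e^{3\hat w}\,\d s_{g_{\S^3}}\Big),
\]
which gives
\[
E_f[w]\ge E[w]-\frac{16\pi^2}{3}\log\Big(\dashint_{\S^3}e^{3\hat w}\,\d s_{g_{\S^3}}\Big)-\frac{16\pi^2}{3}\log M_f .
\]
It then remains to show that
\[
E[w]-\frac{16\pi^2}{3}\log\dashint_{\S^3}e^{3\hat w}\,\d s_{g_{\S^3}}\ge 0
\]
for all $w\in\H_{g_{\B^4}}$. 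This is exactly the Lebedev--Milin type inequality of Ache--Chang \cite[Theorem B]{AC2017}. In the present notation (flat metric on the ball, homogeneous Neumann condition, $T_{g_{\B^4}}=2$), it reads
\[
\log\dashint_{\S^3}e^{3(\hat w-\bar{\hat w})}\le \frac{3}{16\pi^2}\,\P^{4,3}_{g_{\B^4}}(w,w),
\]
where $\bar{\hat w}$ is the mean of $\hat w$. Using $4\int_{\S^3}2\hat w=16\pi^2\,\bar{\hat w}$ (since $|\S^3|=2\pi^2$), we get $E[w]=\P^{4,3}(w,w)+16\pi^2\bar{\hat w}$. Multiplying the inequality by $16\pi^2/3$ and adding $16\pi^2\bar{\hat w}$ to both sides shows that the required quantity is nonnegative.

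\textbf{Main obstacle.} The delicate step is matching the normalization of Ache--Chang's Theorem B to ours. Their inequality is stated for the adapted metric, or via the $H^2$ extension with the Neumann condition. One must check that the constants agree: the factor $2$ in front of the boundary term in \eqref{P43}, the coefficient $4$ on $T$, and the factor $16\pi^2/3$. One must also check that the functional $\P^{4,3}_{g_{\B^4}}$, applied to $w=u-\rho$ with $\partial_\nu w=0$, coincides with theirs. If needed, I would verify the constants by using that $E_f$ is invariant under adding constants and that $w\equiv 0$ gives equality when $f\equiv 2$.
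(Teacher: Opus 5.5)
Your proposal is correct and follows the paper's proof. The upper bound comes from the monotonicity in Lemma \ref{lem:2.1}. The lower bound comes from $f\le M_f$ together with the Ache--Chang inequality $\log\big(\dashint_{\S^3}e^{3\hat w}\,\d s_{g_{\S^3}}\big)\le \frac{3}{16\pi^2}E[w]$, which is equivalent to your mean-subtracted form because $E[w]=\P^{4,3}_{g_{\B^4}}(w,w)+16\pi^2\bar{\hat w}$. Your check that $E_f[w_0]$ is finite is a detail the paper leaves implicit.
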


				\begin{proof}
					For each $t > 0$, it follows from Lemma \ref{lem:2.1} that 
					$E_f [w(t)]\leq E_f [w(0)]=E_f [w_0]<+\infty$
					for any initial data $w_0\in H^4(\B^4,g_{\B^4})$. On the other hand, by the sharp inequality of Ache-Chang \cite[Theorem B]{AC2017}, which is
					a generalization of the classical Lebedev–Milin inequality in \cite{LM1951}, we have 
					\[
					\log \Big(\dashint_{\S ^3}e^{3\hat w}\, \d s _{ g_{\S ^3}}\Big)\leq \frac{3}{16\pi^2}E[w].
					\]
					Consequently, we obtain the uniform lower bound
					\[
					E_f [w]\geq -\frac{16\pi^2}{3}\log\big(\max_{\S ^3} f \big),
					\]which completes the proof.
				\end{proof}

				Lemma \ref{lem:2.1} together with Corollary \ref{cor:E_f w_upperbdd} yields the estimate
				\be\label{21}
				\int_{0}^{+\infty}\int_{\S ^3}(\al  f-T_{g})^2\, \d s _{\hat g}\, \d t\leq \frac{1}{4}\Big(E_f [w_0]+\frac{16\pi^2}{3}\log M_f\Big)<+\infty.
				\ee 	
				Thus, there exists a sequence $\{t_k\}$ with $t_k\to +\infty$ as $k\to +\infty$, and the  corresponding metrics $g_k=g(t_k)$, such that 
				\be\label{22}
				\int_{\S ^3}(\al(t_k)  f -T_{g_{k}})^2\, \d s _{\hat g_{k}}\to 0\quad \text{ as }\, k\to +\infty.
				\ee 
				Therefore, if $\{g_k\}$ converges (up to a subsequence) to a metric $g_{\infty}\in [g_{\B^4}]$, then
				the $Q$-curvature, $T$-curvature, and mean curvature of $(\B ^4, g_{\infty})$
				satisfy
				\[\left\{\begin{aligned}
					&Q_{g_\infty}=0&&\mbox{ in }\,\B ^4,\\ & T_{g_\infty}=\al _{\infty}f   &&\mbox{ on }\, \S ^3,
					\\ & H_{g_\infty}=0&& \mbox{ on }\, \S ^3,
				\end{aligned}\right.
				\]
				where $\al _\infty>0$ is a positive constant in view of \eqref{42}. In particular, if $g_{\infty}=e^{2u_{\infty}}g_{\B ^4}$, then the conformally related metric  $\bar g_{\infty}=e^{2\bar u_{\infty}}g_{\B ^4}$, with  \[\bar u_{\infty}:=u_{\infty} +\frac{1}{3}\log \al _{\infty}\] provides a solution to problem  \eqref{maineq}, thereby prescribing $f$ as the  $T$-curvature. On the other hand, if $\{g_k\}$  does not converge along any sequence $t_k\to +\infty$, one expects a concentration phenomenon analogous to that studied in \cite{S2005,MS2006,CX2011,H2012,XZ2016}. 

	\section{Asymptotic  behavior  of the normalized flow}\label{sec:3}
	
	In this section, we investigate the asymptotic behavior of the solution $w(t)$ to the flow \eqref{flow3} and of the corresponding conformal metric $g(t)$ as $t\to +\infty$.

	\subsection{Normalized flow}\label{sec:3.1}

	Following \cite{XZ2016}, in the setting of  $(\B^4,\S^3,g)$, let $g=g(t):=e^{2u(t)}g_{\B^4}$ be a smooth family of metrics as  in Section \ref{sec:2}.  Then there exists a family of conformal diffeomorphisms
	\[
	\Phi(t):(\B^4,\S^3)\mapsto (\B^4,\S^3),
	\] mapping  $\S^3$ onto itself, such that
	\be\label{23}
	\int_{\S^3}x\, \d s _{\hat h}=0\quad \text{ for all }\, t\geq 0, 
	\ee
	where  $x=(x_1,x_2,x_3,x_4)$ is the restriction to $\S^3$ of the standard coordinate
	functions on $\R^4$, and $ h(t):=\Phi^*(t) g(t)$.   For notational simplicity, we  write $g=g(t)$, $h=h(t)$, and $\Phi=\Phi(t)$. The metric $h=\Phi^* g$ induced by this normalization \eqref{23} will be referred to as the  \emph{normalized metric}.

	Note that $g=e^{2w(t)}g_*$, where $g_*$ is the adapted metric defined in \eqref{adapted-metric}. Accordingly, if we write $h:=e^{2v}g_*$,  and denote by $\hat v:=v|_{\S ^3}$, $\hat w:=w|_{\S ^3}$ and $\hat \Phi:=\Phi|_{\S ^3}$ the corresponding boundary restrictions, then
	\be \label{24}
	\hat v=\hat w\circ \hat \Phi+\frac{1}{3}\log\det (\d\hat \Phi).
	\ee 
	Moreover, letting $\hat h:=h|_{\S ^3}$
	and using \eqref{20}, we obtain
	\be \label{26}
	\vol (\S ^3,\hat{h})=\vol (\S ^3,\hat{g})=2\pi^2\quad \text{ for all }\, t\geq 0.
	\ee 
	
	Since $\Phi$ is a conformal diffeomorphism and $ h=\Phi^* g$, the associated curvature quantities transform naturally under  pullback. In particular,  in view of \eqref{QHt=0},  we have 
	\be\label{QHT-conformal}
	\left\{
	\begin{aligned}
		&Q_h=Q_g\circ\Phi=0&&\mbox{ in }\,\B ^4 ,
		\\&H_{h}=H_g\circ \hat \Phi=0&&\mbox{ on }\,\S ^3,\\&T_h=T_g\circ\hat \Phi&&\mbox{ on }\,\S ^3.
	\end{aligned}
	\right.
	\ee 
	Now consider the conformal change of metric $h=e^{2v}g_*$.  It follows from \eqref{conformal-3}, \eqref{adapted-metric-property}, and \eqref{QHT-conformal} that $v$ satisfies
	\be \label{flow-g*}
	\left\{\begin{aligned}
		&P_{g_*}^4v=0&&\mbox{ in }\,\B ^4,\\
		&P_{g_*}^{3,b}v+T_{g_*}=T_h e^{3 v}&&\mbox{ on }\,\S ^3,
		\\ & \frac{\pa  v}{\pa \nu_{g_*}}=0&&\mbox{ on }\,\S ^3.	
	\end{aligned}\right.
	\ee 
	Since  $g_{*}=e^{2\rho}g_{\B^4}$ as in \eqref{adapted-metric}, we may rewrite \eqref{flow-g*} in terms of the background metric $g_{\B^4}$. Indeed, using again \eqref{conformal-3} together with \eqref{adapted-metric-property}, and noting that $\rho|_{\S^3}=0$, we deduce that  $v$ also satisfies
	\be \label{flow5}
	\left\{\begin{aligned}
		&P_{g_{\B ^4}}^4v=0&&\mbox{ in }\,\B ^4,\\
		&P_{g_{\B ^4}}^{3,b}v+T_{g_{\B ^4}}=T_h e^{3 v}&&\mbox{ on }\,\S ^3,\\ &\frac{\pa  v}{\pa \nu_{g_{\B ^4}}}=0&&\mbox{ on }\,\S ^3.
	\end{aligned}\right.
	\ee 
	Moreover, by virtue of \eqref{samecurvature}, we have
	\be \label{2.6}
	\int_{\S ^3}(T_h-\al  f\circ\hat \Phi) \cP^{3}_{\hat h}(T_h-\al  f\circ\hat\Phi)\, \d s _{\hat h}
	=\int_{\S ^3}(T_g-\al  f)\cP^{3}_{\hat g}(T_g-\al  f )\, \d s _{\hat g} .
	\ee 
	
	Let us define a functional on $H^{\frac{3}{2}}(\S ^3,g_{\S^3})$ by
	\[
	\hat{E}[\tilde \phi]:=\la \cP^{3}_{g_{\S^3}}\tilde \phi, \tilde \phi\ra_{L^2(\S^3,g_{\S^3})} +2\int_{\S^3}Q^3_{g_{\S ^3}}\tilde \phi \, \d s_{g_{\S ^3}}.
	\]
	By conformal invariance, it holds
	\be \label{2.7}
	\hat {E}[\hat v]=\hat {E}[\hat w],
	\ee 
	see \cite[Lemma 2.2]{XZ2016} or step 1 in the  proof of Theorem 4.1 in  \cite{CY1997} for further details.  Moreover, arguing as in \eqref{Beckner-3-equal}, we deduce from  the first equation in  \eqref{flow5} together with   \cite[Lemma 6.2]{AC2017} that 
	\be \label{samev}
	\cP^{3}_{g_{\S ^3}}\hat v=P^{3,b}_{g_{\B ^4}}v\quad \text{ on }\, \S^3.
	\ee 
	Consequently, using \eqref{EW}-\eqref{P43-1}, \eqref{Beckner-3-equal},  \eqref{flow5} and \eqref{samev} (recalling that  $T_{g_{\B^4}}=Q^3_{g_{\S^3}}$), we obtain
	\[
	E[v]= 
	2\hat {E}[\hat v]\quad \text{ and }\quad E[w]=2\hat {E}[\hat w],
	\]
	which, together with \eqref{2.7}, implies that 
	\be \label{sameenergy_vw}
	E[v]=E[w].
	\ee

	The Ache-Chang inequality \cite[Theorem B]{AC2017} also applies to $v$.
	Moreover, thanks to the vanishing center-of-mass condition  \eqref{23}, we can derive the following strengthened version.
	\begin{prop}\label{prop:3.1}
		There exists a constant  $a<1$ such that  for every $v\in H^2(\B^4,g_{\B^4})$ solving the system \eqref{flow5} and the corresponding metric $h=e^{2v}g_{*}$ satisfying \eqref{23}, one has
		\[
		\log \Big(\dashint_{\S ^3}e^{3\hat v}\, \d s _{ g_{\S ^3}}\Big)\leq \frac{3}{16\pi^2}\Big[a\P^{4, 3}_{g_{\B ^4}}(v,  v)+4\int_{\S ^3}T_{ g_{\B ^4}}\hat v\, \d s _{ g_{\S ^3}}\Big].
		\]
	\end{prop}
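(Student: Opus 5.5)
We reduce the statement to an improved Beckner--Onofri inequality on $\S^3$ under the vanishing center-of-mass constraint, in the spirit of Aubin's improvement for the Moser--Trudinger inequality and its third-order analogue used in \cite{XZ2016}. First, since $v$ solves \eqref{flow5}, the identity \eqref{samev} together with \eqref{P43-1} and integration by parts gives $\P^{4,3}_{g_{\B^4}}(v,v)=2\langle \cP^3_{g_{\S^3}}\hat v,\hat v\rangle_{L^2(\S^3)}=2\|\hat v\|_{\dot H^{3/2}}^2$. Likewise $4\int_{\S^3}T_{g_{\B^4}}\hat v=8\int_{\S^3}\hat v$, because $T_{g_{\B^4}}=2$. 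Dividing by $2$, the claim becomes the purely boundary inequality
\[
\log\Big(\dashint_{\S^3}e^{3\hat v}\,\d s_{g_{\S^3}}\Big)\le \frac{3}{8\pi^2}\Big[a\|\hat v\|^2_{\dot H^{3/2}}\Big]+3\dashint_{\S^3}\hat v\,\d s_{g_{\S^3}}
\]
(using $\vol(\S^3)=2\pi^2$), valid for $\hat v$ with $\int_{\S^3}x\,e^{3\hat v}\,\d s=0$. The case $a=1$ is exactly the Ache--Chang/Beckner inequality quoted in Corollary~\ref{cor:E_f w_upperbdd}, and the target is an improvement to some $a<1$, for instance any $a>1/2$.

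To prove the improved inequality I would follow Aubin's localization argument, adapted to the nonlocal operator $\cP^3$ as in Chang--Yang \cite{CY1997}. Assume without loss of generality that $\dashint\hat v=0$. Because the center of mass of $e^{3\hat v}$ vanishes, a covering argument produces finitely many disjoint regions $\Omega_1,\Omega_2$ (for example complementary caps around antipodal-type points), each carrying a definite fraction $\gamma_0>0$ of the mass $\int e^{3\hat v}$, with $\gamma_0$ depending only on the sphere. Choose smooth cutoffs $\eta_j$ that equal $1$ on $\Omega_j$ and have disjoint supports. Applying the sharp Adams/Beckner-type inequality of Fontana type to $\eta_j\hat v$ with constant $\frac{3}{16\pi^2}\cdot 2=\frac{3}{8\pi^2}$, we get for each $j$
\[
\log\int_{\Omega_j}e^{3\hat v}\le \frac{3}{8\pi^2}\|\eta_j\hat v\|^2_{\dot H^{3/2}}+C.
\]
For the sum over $j$ one needs the almost-orthogonality estimate $\sum_j\|\eta_j\hat v\|_{\dot H^{3/2}}^2\le(1+\varepsilon)\|\hat v\|^2_{\dot H^{3/2}}+C_\varepsilon\|\hat v\|_{L^2}^2$. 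The key point is that the commutators $[\cP^3,\eta_j]$ are pseudodifferential operators of order $2$, hence compact relative to the $\dot H^{3/2}$ norm. Picking the index $j$ with the smaller energy then gives
\[
\log\dashint e^{3\hat v}\le \frac{3}{8\pi^2}\Big(\tfrac12+\varepsilon\Big)\|\hat v\|^2_{\dot H^{3/2}}+C_\varepsilon\|\hat v\|^2_{L^2}+C.
\]

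The lower-order term $\|\hat v\|_{L^2}^2$ is then absorbed by the standard splitting $\hat v=v_1+v_2$, where $v_1$ is the projection onto the spherical harmonics of degree at most $K$, with $K$ large. For $v_2$ we have $\|v_2\|_{L^2}^2\le \lambda_K^{-3/2}\|v_2\|^2_{\dot H^{3/2}}$, which is small. The component $v_1$ is finite dimensional, so $\|v_1\|_\infty\le C_K\|v_1\|_{L^2}$; we apply the localized estimate to $v_2$, treat $v_1$ by its sup bound, and use the center-of-mass condition, which remains approximately valid for $e^{3v_2}$ up to factors $e^{C\|v_1\|_\infty}$. This argument yields an inequality of the form $\log\dashint e^{3\hat v}\le \frac{3}{8\pi^2}a'\|\hat v\|^2_{\dot H^{3/2}}+C$, with $a'\in(1/2,1)$ and an additive constant $C$.

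\emph{Main obstacle.} The statement as written has no additive constant, and removing it is the hardest step. I would handle it by interpolating with the sharp inequality. Write $\|\hat v\|^2_{\dot H^{3/2}}=:D$. For $D\ge D_0$, with $D_0$ large, the constant $C$ is dominated by $\frac{3}{8\pi^2}(a-a')D$ for any $a\in(a',1)$. For $D\le D_0$, I would argue by contradiction and compactness. Suppose there are functions $\hat v_k$ with bounded energy $D_k$ such that the ratio
\[
\frac{\log\dashint e^{3\hat v_k}}{\frac{3}{8\pi^2}D_k}
\]
tends to $1$, where the quotient is taken relative to the mean-zero normalization. Then the $\hat v_k$ are bounded in $H^{3/2}$, so $e^{3\hat v_k}$ is compact in $L^1$, and any limit is an extremal of the sharp inequality satisfying the balancing condition. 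If $D_k\to D>0$, the limit is a nontrivial extremal; the extremals are exactly the functions $\frac13\log\det(\d\Phi)+c$ for conformal maps $\Phi$, and the only balanced one is constant, a contradiction. If instead $D_k\to0$, a second-order expansion around $0$ shows that the ratio tends to $\frac{\lambda_1\text{-part}}{\ldots}$, which is controlled by the first eigenspace. On that eigenspace the center-of-mass condition forces the degree-one harmonic component to be small to first order, and on degrees $\ge2$ the Beckner quadratic form beats the $L^2$ variance term by a fixed factor. This spectral-gap computation is exactly where the constraint \eqref{23} enters, and I expect it to be the most delicate part of the proof.
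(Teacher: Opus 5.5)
Your proposal is correct, but it takes a different and much longer route than the paper. The paper's proof is essentially your first paragraph. It quotes the improved Beckner inequality under the balancing condition from Wei--Xu (Theorem~2.6 in \cite{WX1998}): for some $a<1$,
\[
\log\dashint_{\S^3}e^{3\hat v}\,\d s_{g_{\S^3}}\le \tfrac34\big[a\dashint_{\S^3}\hat v\,\cP^3_{g_{\S^3}}\hat v\,\d s_{g_{\S^3}}+4\dashint_{\S^3}\hat v\,\d s_{g_{\S^3}}\big].
\]
It then converts this using $P^4_{g_{\B^4}}v=0$ and \eqref{samev}, exactly as in your reduction.

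You instead re-derive the cited inequality in two stages:
\begin{enumerate}
\item An Aubin-type localization, which gives a constant $a'$ near $\frac12$ but with an additive constant.
\item A contradiction--compactness argument that removes the constant. In the regime $D_k\to D>0$ it uses the classification of Beckner extremals. In the regime $D_k\to 0$ it uses the second-order expansion at $0$: the balancing condition makes the degree-one modes $O(\|\hat v\|^2_{H^{3/2}})$, so the ratio is asymptotically at most $\Lambda_1/\Lambda_5=6/24$. This is the same spectral mechanism the paper uses later in \eqref{72} and Lemma~\ref{lem:5.12}.
\end{enumerate}

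The citation buys brevity. Your argument is self-contained and shows explicitly where \eqref{23} enters: once for non-concentration in the localization, and once for killing the degree-one modes. The price is importing the classification of solutions of $\cP^3_{g_{\S^3}}w+2=2e^{3w}$ and pseudodifferential commutator estimates for $\cP^3_{g_{\S^3}}$.

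Two small points:
\begin{itemize}
\item In the almost-orthogonality step, compactness of the commutators is not enough on its own. You also need the pointwise bound $\sum_j\eta_j^2\le 1$, used for instance via $\langle\cP^3_{g_{\S^3}}u,\chi u\rangle\le\langle\cP^3_{g_{\S^3}}u,u\rangle+C\|u\|_{H^1}^2$ with $\chi=\sum_j\eta_j^2$.
\item Your contradiction argument only yields \emph{some} $a$ close to $1$. The aside that one could take any $a>\frac12$ with no additive constant is a much stronger statement that your argument does not prove. It is not needed for the proposition.
\end{itemize}
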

	\begin{proof}
		By \eqref{23} and Theorem 2.6 in \cite{WX1998}, there exists a constant  $a<1$ such that
		\begin{align*}
			\log \Big(\dashint_{\S ^3}e^{3\hat v} \, \d s_{g_{\S ^3}}\Big) \leq&\, \frac{3}{4}\Big[a\dashint_{\S ^3}\hat v\cP^{3}_{g_{\S ^3}}  \hat v\,\d s_{g_{\S ^3}} +4\dashint_{\S ^3}\hat v \,\d s_{g_{\S ^3}}\Big]
			\\=&\,\frac{3}{4}\Big[\frac{a}{2\pi^2}\la \cP^{3}_{g_{\S ^3}}  \hat v,\hat v\ra_{L^2(\S^3,g_{\S ^3})}  +4\dashint_{\S ^3}\hat v \,\d s_{g_{\S ^3}}\Big].
		\end{align*}
		The desired estimate then follows from \eqref{P43}, \eqref{samev}, and the first equation in   \eqref{flow5}.
	\end{proof}

	Consequently, we obtain the following a priori estimates for solutions  $v$ in \eqref{flow5} by adapting the argument of  \cite[Lemma 3.2]{MS2006}.
	\begin{lem}\label{lem:3.2}
		Let $v$ be any smooth solution of  \eqref{flow5}. Then there exist some  positive constants $C$ and $C(\si)$, independent of $t$, such that 
		\[ 
		\sup_{t\geq 0} \|v\|_{H^2(\B ^4,g_{\B^4})}\leq C,
		\]
		and, for any $\si\in \R $,
		\[
		\sup_{t\geq 0}\int_{\S^3}e^{3\si \hat v} \, \d s_{g_{\S^3 }} \leq C(\si).
		\]
	\end{lem}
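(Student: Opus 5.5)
The plan follows \cite[Lemma 3.2]{MS2006}: combine the improved inequality of Proposition \ref{prop:3.1} with the uniform energy bound from Corollary \ref{cor:E_f w_upperbdd} and the identity \eqref{sameenergy_vw}.

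\textbf{Step 1: control of $E[v]$.} By \eqref{26}, $\dashint_{\S^3} e^{3\hat v}\,\d s_{g_{\S^3}}=1$. Since $\hat\Phi$ preserves the volume of $\hat g$, we also have $\dashint_{\S^3} f e^{3\hat w}\,\d s_{g_{\S^3}}\in[m_f,M_f]$. Corollary \ref{cor:E_f w_upperbdd} then gives
\[
E[w]\le E_f[w_0]+\frac{16\pi^2}{3}\log M_f=:C_0,
\]
and by \eqref{sameenergy_vw} we get $E[v]=E[w]\le C_0$ for all $t\ge 0$.

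\textbf{Step 2: coercivity.} Applying Proposition \ref{prop:3.1} with left-hand side $\log 1=0$ gives
\[
0\le a\,\P^{4,3}_{g_{\B^4}}(v,v)+8\int_{\S^3}\hat v\,\d s_{g_{\S^3}}.
\]
Subtracting this from $E[v]=\P^{4,3}_{g_{\B^4}}(v,v)+8\int_{\S^3}\hat v\,\d s_{g_{\S^3}}\le C_0$ yields
\[
(1-a)\,\P^{4,3}_{g_{\B^4}}(v,v)\le C_0.
\]
This is uniform in $t$ because $a<1$.

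Next we bound the mean of $\hat v$. Jensen's inequality with $\dashint_{\S^3} e^{3\hat v}=1$ gives $\dashint_{\S^3}\hat v\le 0$. For the lower bound, $8\int_{\S^3}\hat v=E[v]-\P^{4,3}_{g_{\B^4}}(v,v)$, and $E[v]$ is bounded below by the Ache-Chang inequality, $E[v]\ge \frac{16\pi^2}{3}\log 1=0$. Hence $\int_{\S^3}\hat v\ge -C$, so $|\dashint_{\S^3}\hat v|\le C$.

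\textbf{Step 3: the full $H^2$ norm.} From \eqref{P43-1}, $\P^{4,3}_{g_{\B^4}}(v,v)$ controls $\|\Delta v\|_{L^2(\B^4)}^2+2\|\nabla_{g_{\S^3}}\hat v\|_{L^2(\S^3)}^2$. We need a Poincaré-type inequality on $\H_{g_{\B^4}}$:
\[
\|v\|_{H^2(\B^4)}^2\le C\Big(\|\Delta v\|_{L^2}^2+\|\nabla_{g_{\S^3}}\hat v\|_{L^2}^2+\Big(\int_{\S^3}\hat v\Big)^2\Big).
\]
This holds by elliptic regularity for the Neumann problem, $\|v-\bar v\|_{H^2}\le C\|\Delta v\|_{L^2}$ with $\partial_\nu v=0$. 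The interior mean $\bar v$ is then controlled by the boundary mean via the trace inequality applied to $v-\bar v$, and the boundary mean was bounded in Step 2. (Alternatively, argue by contradiction and compactness: the kernel of the quadratic form consists of constants.) This is the step that needs the most care, but it is standard.

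\textbf{Step 4: exponential integrability.} The trace of $H^2(\B^4)$ lies in $H^{3/2}(\S^3)$, so $\|\hat v\|_{H^{3/2}(\S^3)}\le C$. The Beckner/Moser-Trudinger inequality on $\S^3$ reads
\[
\log\dashint_{\S^3} e^{3(\phi-\bar\phi)}\le \frac{3}{8\pi^2}\langle \cP^3\phi,\phi\rangle .
\]
Applying it to $\phi=\sigma\hat v$ gives $\int_{\S^3} e^{3\sigma\hat v}\le C\exp\big(C\sigma^2\|\hat v\|_{\dot H^{3/2}}^2+3\sigma\bar{\hat v}\big)\le C(\sigma)$, which is the second estimate of Lemma \ref{lem:3.2}.
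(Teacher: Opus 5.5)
Your proposal is correct and is essentially the paper's proof written out. The paper cites \cite[Lemma 3.2]{MS2006}, whose argument is exactly your Steps 1, 2 and 4 (energy monotonicity with \eqref{sameenergy_vw}, the improved inequality of Proposition \ref{prop:3.1} with $a<1$ plus Jensen to control $\P^{4,3}_{g_{\B^4}}(v,v)$ and the mean, then Beckner's inequality), to bound $\hat v$ in $H^{3/2}(\S^3,g_{\S^3})$ and $e^{3\si\hat v}$ in $L^1$, and then lifts to $H^2(\B^4,g_{\B^4})$ by biharmonic Neumann regularity for \eqref{flow5}; your only deviation is the cosmetic choice of reading the interior bound off the $\|\Delta v\|_{L^2(\B^4)}^2$ term in \eqref{P43-1} via Neumann regularity for the Laplacian, which works equally well.
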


	\begin{proof}
		By Lemma 3.2 in \cite{MS2006} (see also  \cite{H2012,CX2011}), we have 
		\[ 
		\sup_{t\geq 0} \|\hat v\|_{H^{\frac{3}{2}}(\S ^3,g_{\S^3})}\leq C,
		\]
		and, for any $\si\in \R $,
		\[ 
		\sup_{t\geq 0}\|e^{3\si \hat v}\|_{L^1(\S ^3,g_{\S^3})}\leq C(\si),
		\]
		where $C$ and $C(\si)$ are positive constants independent of $t$.  Lemma  \ref{lem:3.2} follows from  \eqref{flow5} and standard elliptic regularity for the biharmonic
		operator with Neumann boundary condition.
	\end{proof}

Using the uniform $H^{2}(\B^4,g_{\B^4})$-bound for $v$ obtained in Lemma \ref{lem:3.2},
and following the scheme of \cite[Section 4]{B2003} together with standard elliptic
regularity, one can further derive higher-order a priori estimates. In particular, for
each $T>0$ there exists a constant $C_T>0$ such that $\sup_{t\in[0,T]}\|v(t)\|_{H^{4}(\B^4,g_{\B^4})}\le C_T$.
Consequently, by the continuation argument in \cite[Section~5]{B2003}, the solution
exists for all $t\ge0$. By the equivalence of the formulations established above, global
existence also holds for the flows \eqref{flow2}, \eqref{flow3}, \eqref{flow4},
\eqref{flow-g*}, and \eqref{flow5}.

	\subsection{Curvature evolution and its $L^2$-convergence}
	In this subsection, we establish the evolution equation for the $T$-curvature along the flow and study the $L^2$-norm of the flow speed.
	
	Using \eqref{conformal-3},  \eqref{flow3},  and \eqref{Beckner-3-equal}, we  compute the evolution of the $T$-curvature on $\S^3$:
	\[
	\pa _t T_g=\pa_t(e^{-3 \hat w}(\cP^{3}_{g_{\S ^3}}\hat w+T_{g_{\B ^4}}))=-3(\pa _t \hat w) T_g+\cP^3_{\hat g}(\pa _t \hat w)
	=3T_g(T_g-\al  f)-\cP^{3}_{\hat g}(T_g-\al  f).
	\]
	Combining this with  \eqref{17}, \eqref{alpha_t}, and \eqref{flow3},   we obtain
	\begin{align}
		&\,\frac{\d}{\d t}\Big(\int_{\S ^3}|T_g-\al  f|^2\, \d s _{\hat g}\Big)\nonumber\\
		=&\,2\int_{\S ^3}(T_g-\al  f)(\pa _t T_g-\al _t f )\, \d s _{\hat g}
		-3\int_{\S ^3}(T_g-\al  f)^3\, \d s _{\hat g}\nonumber\\
		=&\,6\int_{\S ^3}T_g(T_g-\al  f)^2\, \d s _{\hat g}-2\int_{\S ^3}(T_g-\al  f)\cP^{3}_{\hat g}(T_g-\al  f)\, \d s _{\hat g}
		\nonumber	\\
		&\,
		-\frac{8\pi^2}{3}\Big(\frac{\al _t}{\al }\Big)^2-3\int_{\S ^3}(T_g-\al  f)^3\, \d s _{\hat g}.\label{38}
	\end{align}

	Next, we show the convergence of \eqref{22} is in fact uniform in time $t$. 
	
	For $t\geq 0$,  we introduce the quantities
	\be\label{62}
	F_2(t):=\int_{\S ^3}|T_g-\al  f|^2\, \d s _{\hat g}=\int_{\S ^3}|T_h-\al  f_{\hat \Phi}|^2\, \d s _{\hat h} 
	\ee
	and
	\be \label{63}
	G_2(t)
	:=\int_{\S ^3}(T_g-\al  f)\cP^{3}_{\hat g}(T_g-\al  f )\, \d s _{\hat g} 
	=\int_{\S ^3}(T_h-\al  f_{\hat \Phi})\cP^{3}_{\hat h}(T_h-\al  f_{\hat \Phi})\, \d s _{\hat h},
	\ee 
	in view of \eqref{2.6},  where  $f_{\hat\Phi}:=f\circ\hat\Phi$.

	\begin{lem}\label{lem:3.4}
		Along the equivalent flows \eqref{flow2}, \eqref{flow3}, \eqref{flow4}, \eqref{flow-g*}, and \eqref{flow5}, we have  \[F_2(t)\to 0 \quad \text{ as }\, t\to +\infty.\]
	\end{lem}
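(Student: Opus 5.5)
The plan is to follow the standard argument of Brendle and Malchiodi--Struwe: combine the integrability \eqref{21} with a differential inequality for $F_2$ derived from \eqref{38}. Since $\int_0^\infty F_2(t)\,\d t<+\infty$ by \eqref{21}, it suffices to show
\[
\frac{\d}{\d t}F_2(t)\le C\bigl(F_2(t)+F_2(t)^{\gamma}\bigr)
\]
for some $\gamma>1$ and a constant $C$ independent of $t$, or an inequality of comparable strength. Indeed, the integrability of $F_2$ gives a sequence $t_k\to\infty$ with $F_2(t_k)\to0$, as in \eqref{22}. Integrating the differential inequality forward from a time $t_k$ at which $F_2(t_k)$ is small keeps $F_2$ small on intervals of fixed length. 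Combined with $\int_{t_k}^\infty F_2\to0$, a standard ODE comparison then gives $F_2(t)\to0$.

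\textbf{Estimating \eqref{38}.} We drop the nonpositive term $-\frac{8\pi^2}{3}(\al_t/\al)^2$ and keep the good term $-2G_2(t)$. We then bound the two remaining terms:
\begin{itemize}
\item $6\int T_g(T_g-\al f)^2$, which we rewrite as $6\int (T_g-\al f)^3+6\al\int f(T_g-\al f)^2$;
\item the cubic term $-3\int(T_g-\al f)^3$.
\end{itemize}
The quadratic piece is bounded by $CF_2$ using \eqref{42}. The only remaining difficulty is a cubic term $C\int_{\S^3}|T_g-\al f|^3\,\d s_{\hat g}$. All quantities are invariant under pullback by $\Phi$, so by \eqref{62}--\eqref{63} we can compute in the normalized metric $\hat h=e^{2\hat v}g_{\S^3}$ with $\psi:=T_h-\al f_{\hat\Phi}$. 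This gives
\[
\int|\psi|^3e^{3\hat v}\,\d s_{g_{\S^3}}\le \|e^{3\hat v}\|_{L^{p'}}\,\|\psi\|^3_{L^{3p}(\S^3,g_{\S^3})}.
\]
The first factor is uniformly bounded by Lemma \ref{lem:3.2}.

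\textbf{Main obstacle.} The key step is controlling $\|\psi\|_{L^{3p}}$ by $G_2$ and $F_2$. Since $\cP^3$ is of order $3$ on the $3$-dimensional sphere, $H^{3/2}(\S^3)$ embeds into every $L^q$, and $G_2=\|\psi\|^2_{\dot H^{3/2}}$ in the normalized picture because $\cP^3_{\hat h}e^{3\hat v}=\cP^3_{g_{\S^3}}$. It remains to control the mean (the $L^2$ part) of $\psi$. We bound it by Hölder's inequality:
\[
\Bigl|\int\psi\,\d s_{g_{\S^3}}\Bigr|\le \|e^{-3\hat v}\|_{L^1}^{1/2}F_2^{1/2},
\]
where $\|e^{-3\hat v}\|_{L^1}$ is again uniformly bounded by Lemma \ref{lem:3.2}. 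This yields
\[
\|\psi\|_{L^{q}}\le C\bigl(G_2^{1/2}+F_2^{1/2}\bigr).
\]
Interpolating between $L^2(e^{3\hat v})$ and $L^q$, we obtain
\[
\int|\psi|^3\,\d s_{\hat h}\le C\,F_2^{\theta}\bigl(G_2+F_2\bigr)^{(3-2\theta)/2}
\]
for a suitable $\theta$. Choosing the exponents so that $(3-2\theta)/2\le1$, Young's inequality gives
\[
\int|\psi|^3\,\d s_{\hat h}\le G_2+C\bigl(F_2+F_2^{\gamma}\bigr).
\]
The $G_2$ term is absorbed by $-2G_2$ in \eqref{38}, which yields the desired differential inequality. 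The delicate point is exactly this exponent bookkeeping: with the weight $e^{3\hat v}$ only in every $L^p$ rather than $L^\infty$, the interpolation must leave a power of $G_2$ at most $1$. If that fails, I would follow \cite{MS2006,CX2011}: use the Sobolev embedding with the weight directly, and accept a differential inequality $F_2'\le C(F_2+F_2^{\gamma})$ with some $\gamma>1$. That is still enough, given $\int_0^\infty F_2<\infty$.
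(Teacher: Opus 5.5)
Your proposal is correct, but it controls the cubic term differently from the paper.

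\textbf{The paper's route.} It works only on a time interval $[t_0,t_1)$ on which $F_2\le 2\varepsilon_0$. There $T_h$ is bounded in $L^2(\S^3,\hat h)$, so the right side of $\cP^3_{g_{\S^3}}\hat v+2=T_he^{3\hat v}$ is bounded in $L^p$ for $p<2$. The $W^{3,p}$-estimates for $\cP^3$ then give an $L^\infty$ bound on $\hat v$, hence the metric equivalence \eqref{43}. Only after that do H\"older and $H^{3/2}\hookrightarrow L^4$ give \eqref{44}, i.e. a cubic term at most $C_0F_2^{1/2}(G_2+F_2)$. This is absorbed into $-2G_2$ because $F_2$ is small, and a continuity argument with \eqref{21} extends the interval to $[t_0,\infty)$.

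\textbf{Your route.} You avoid the elliptic regularity for $\cP^3$ and the pointwise bound on $\hat v$ altogether. The uniform exponential integrability in Lemma \ref{lem:3.2}, for both $e^{3\hat v}$ and $e^{-3\hat v}$, is enough to control the weight and the mean of $\psi$. H\"older interpolation then gives a differential inequality valid for every $t$, not just in the small-$F_2$ regime. The closing ODE/continuity step is then essentially the paper's.

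\textbf{What each buys.} Your interpolation with $\theta=1/2$ already reproduces \eqref{44} for all times, so the metric equivalence is not needed for this lemma. The paper's route gives the extra information \eqref{43} in exchange.

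\textbf{One correction.} For Young's inequality to produce $\varepsilon G_2+CF_2^{\gamma}$, you need $\theta\in(1/2,1)$ strictly, i.e. $(3-2\theta)/2<1$, which gives $\gamma=2\theta/(2\theta-1)>1$. At $\theta=1/2$ the factor $F_2^{1/2}$ in front of $G_2$ is not small a priori. In that case you must instead rely on smallness of $F_2$ inside the continuity argument, exactly as the paper does.
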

	\begin{proof}
		Note that $T_h=T_g\circ \hat \Phi $  for the normalized metric $h=\Phi^*g$.
		By the naturality of the Beckner operator under conformal diffeomorphisms, and by rewriting  \eqref{38} in the normalized coordinates, we obtain
		\begin{align}
			&\,\frac{\d F_2(t)}{\d t }+\frac{8\pi^2}{3}\Big(\frac{\al _t}{\al }\Big)^2\nonumber\\=&\,6\int_{\S ^3}T_h(T_h-\al  f_{\hat \Phi})^2\, \d s _{\hat h}-2\int_{\S ^3}(T_h-\al  f_{\hat \Phi})\cP^{3}_{\hat h}(T_h-\al  f_{\hat \Phi})\, \d s _{\hat h}
			\nonumber\\
			&\,-3\int_{\S ^3}(T_h-\al  f_{\hat \Phi})^3\, \d s _{\hat h}
			\nonumber	\\=&\,-2G_2(t) 
			+3\int_{\S ^3}(T_h-\al  f_{\hat \Phi})^3\, \d s _{\hat h} 
			+6\al \int_{\S ^3}f _{\hat \Phi}(T_h-\al  f_{\hat \Phi})^2\, \d s _{\hat h} .\label{39}
		\end{align}
		For any $\var _0>0$.  By \eqref{22}, there exists $t_0$ arbitrarily large such that
		\be \label{40}
		F_2(t_0)<\var_0.
		\ee 
		Then we choose $t_1> t_0$ such that
		\be \label{41}
		\sup_{t\in [t_0,t_1)} 	F_2(t)\leq 2\var _0.
		\ee 
		Now using \eqref{42}, \eqref{26} and \eqref{41}, we obtain  for $t\in [t_0,t_1)$ that
		\[
		\|T_h\|_{L^2(\S ^3,\hat h)} 
		\leq F_2(t)^{1/2}+\al  \|f_{\hat \Phi}\|_{L^2(\S ^3,\hat h)}
		\leq \sqrt{2\var _0}+\sqrt{2}\pi\frac{M_f }{m_f }:=C(f).
		\]
		This, together with H\"older inequality and Lemma \ref{lem:3.2},  implies that the right-hand side
		  of $\cP^{3}_{g_{\S ^3}}\hat v+T_{g_{\B ^4}}=T_h e^{3\hat v}$
		is uniformly bounded in $L^p(\S ^3,g_{\S ^3})$ for every $p\in (1,2)$.
		Standard $L^p$-estimates  then yield that
		$\hat v$ is bounded in $W^{3,p}(\S ^3,g_{\S ^3})$ for all $p\in (1,2)$,
		and hence also in $L^\infty(\S ^3)$.
		In particular, there exists a  constant $C>1$, independent of $t$,  such that
		\be \label{43}
		C^{-1}g_{\S ^3}\leq \hat h\leq Cg_{\S ^3}\quad \text{ for }\, t\in [t_0,t_1),
		\ee 
		where the inequality is understood in the sense of quadratic forms, i.e., uniformly for all tangent vectors.

		By  H\"older inequality, \eqref{43}, and the Sobolev's embedding $H^{\frac{3}{2}}(\S ^3,g_{\S ^3})
		\hookrightarrow L^4(\S ^3,g_{\S ^3})$, we deduce that
		\begin{align}		
			\int_{\S ^3}|T_h-\al  f_{\hat \Phi}|^3\, \d s _{\hat h} 
			\leq&\, \|T_h-\al  f_{\hat \Phi}\|_{L^2(\S ^3,\hat h)}\|T_h-\al  f_{\hat \Phi}\|^2_{L^4(\S ^3,\hat h)}\nonumber\\
			\leq &\,C\|T_h-\al  f_{\hat \Phi}\|_{L^2(\S ^3,\hat h)}\|T_h-\al  f_{\hat \Phi}\|^2_{L^4(\S ^3,g_{\S ^3})}\nonumber\\
			\leq&\, C\|T_h-\al  f_{\hat \Phi}\|_{L^2(\S ^3,\hat h)}\|T_h-\al  f_{\hat \Phi}\|^2_{H^{\frac{3}{2}}(\S ^3,g_{\S ^3})}\nonumber\\
			\leq &\, C\|T_h-\al  f_{\hat \Phi}\|_{L^2(\S ^3,\hat h)}
			\Big(\int_{\S ^3}(T_h-\al  f_{\hat \Phi})\cP^{3}_{g_{\S ^3}}(T_h-\al  f_{\hat \Phi})\, \d s _{g_{\S ^3}}\nonumber\\
			&\,+
			\int_{\S ^3}(T_h-\al  f_{\hat \Phi})^2\, \d s _{g_{\S ^3}}\Big)\nonumber\\
			\leq &\, C_0F_2(t)^{1/2}(G_2(t)+F_2(t)). \label{44}
		\end{align}
		If we  choose $\var _0>0$ such  that
		$18\var _0C_0^2\leq 1$,
		it follows from \eqref{42}, \eqref{39} and \eqref{44} that
		\[
		\frac{\d F_2(t)}{\d t}\leq (6\al M_f +1)F_2(t)	\leq C_1F_2(t) ,
		\]
		where $C_1:=  12 M_f/m_f+1$.
		The same estimate also holds in the original coordinates for the functions $w$ and $f$
		instead of $v$ and $f_{\hat \Phi}$.
		Integrating the above differential equation from $t_0$ to $t$ with $t\in [t_0,t_1)$, we then obtain
		\[
		F_2(t) \le F_2(t_0)+C_1\int_{t_0}^{t}F_2(s)\,\d s
		\leq  F_2(t_0) 
		+C_1\int_{t_0}^{+\infty}F_2(t) \,\d t.
		\]
		By \eqref{21} and \eqref{40}, the right hand side is less than $2\var _0$
	provided $t_0$ is chosen sufficiently large and satisfies \eqref{41}.
	Hence \eqref{41}   holds for all $t\geq t_0$.
		
		Finally, letting $t_0\to+\infty$ suitably, we conclude that
		\[\limsup_{t\to+\infty }F_2(t)
		\leq\,\limsup_{t_0\to+\infty}
		\Big(F_2(t_0)
		+C_1\int_{t_0}^{+\infty}F_2(t) \,\d t\Big)=0,
		\]
	which proves that $F_2(t)\to0$ as $t\to+\infty$.
	\end{proof}

	Differentiating \eqref{24} with respect to $t$ yields  
	\be \label{33}
	\pa _t \hat v=\pa _t \hat w\circ \hat \Phi+\frac{1}{3}e^{-3\hat v}\operatorname{div}_{g_{\S ^3}}(\hat\xi e^{3\hat v}),
	\ee 
	where the vector field $\hat\xi$ is defined by $\hat\xi:=\xi|_{\S ^3}=(\d\hat \Phi)^{-1}\frac{\d\hat \Phi}{\d t}$
	and  $\xi$ is given by $\xi:=(\d\Phi)^{-1}\frac{\d\Phi}{\d t}$;
	see, for instance, (33) in \cite{MS2006} or its analogue in \cite{H2012,CX2011}.

	Finally, we estimate the $L^{\infty}$-norm of the conformal vector fields  $\hat \xi(\cdot,t)$ in terms of $F_2(t)$.

	\begin{lem}\label{35}
		There exists a uniform constant $C>0$ such that
		\[
		\|\hat \xi(\cdot,t)\|_{L^{\infty}(\S ^3)}\leq C F_2(t)^{1/2}\quad \text{ for all  }\, t\geq 0.
		\]
	\end{lem}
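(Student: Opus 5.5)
The estimate comes from differentiating the center-of-mass normalization \eqref{23} in time, in the spirit of \cite[Lemma 3.3]{MS2006} and its analogues in \cite{H2012,CX2011}. Since $\hat\Phi(t)$ is a conformal diffeomorphism of $\S^3$, the vector field $\hat\xi=(\d\hat\Phi)^{-1}\frac{\d\hat\Phi}{\d t}$ is a conformal Killing field on $(\S^3,g_{\S^3})$. Every such field has the form $\hat\xi=\nabla_{g_{\S^3}}\langle b,x\rangle+(\text{Killing field})$ with $b\in\R^4$. The Killing part generates rotations, which do not affect \eqref{23}, so we may normalize it to zero; this is the convention in \cite{MS2006}. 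Hence $\hat\xi$ ranges over a $4$-dimensional space on which all norms are equivalent, and $\|\hat\xi\|_{L^\infty}\le C|b|$. It therefore suffices to bound $|b|$ by $CF_2(t)^{1/2}$.

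\textbf{Step 1: differentiate \eqref{23}.} We have $\d s_{\hat h}=e^{3\hat v}\d s_{g_{\S^3}}$, and by \eqref{33} and \eqref{flow3} (with \eqref{17} and $T_h=T_g\circ\hat\Phi$),
\[
3\pa_t\hat v\,e^{3\hat v}=3(\al f_{\hat\Phi}-T_h)e^{3\hat v}+\operatorname{div}_{g_{\S^3}}(\hat\xi e^{3\hat v}).
\]
Differentiating \eqref{23} and integrating by parts then gives, for $i=1,\dots,4$,
\[
0=3\int_{\S^3}x_i(\al f_{\hat\Phi}-T_h)\,\d s_{\hat h}-\int_{\S^3}\langle\nabla_{g_{\S^3}}x_i,\hat\xi\rangle_{g_{\S^3}}\,\d s_{\hat h}.
\]

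\textbf{Step 2: bound the first term.} By Cauchy--Schwarz and \eqref{26}, the first term is at most $3\sqrt{2\pi^2}\,F_2(t)^{1/2}$ in absolute value.

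\textbf{Step 3: coercivity of the linear map $b\mapsto\big(\int\langle\nabla x_i,\nabla\langle b,x\rangle\rangle\,\d s_{\hat h}\big)_i$.} This map is given by the symmetric matrix
\[
A_{ij}(t)=\int_{\S^3}\langle\nabla x_i,\nabla x_j\rangle\,\d s_{\hat h}.
\]
Since $\nabla x_i$ is the tangential projection of $e_i$, we get $b^{T}Ab=\int_{\S^3}(|b|^2-\langle b,x\rangle^2)e^{3\hat v}\,\d s_{g_{\S^3}}$. The weight $|b|^2-\langle b,x\rangle^2$ vanishes only at the two points $\pm b/|b|$. By Lemma~\ref{lem:3.2}, $\int e^{-3\hat v}\le C$ uniformly in $t$, so Hölder's inequality gives $\int_{\S^3}\phi\,e^{3\hat v}\ge(\int\phi^{1/2})^2/\int e^{-3\hat v}$ for $\phi\ge0$. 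Applied to $\phi=|b|^2-\langle b,x\rangle^2$, this yields $b^{T}Ab\ge c|b|^2$ with $c>0$ independent of $t$. Combining the three steps gives $c|b|^2\le|b|\,|Ab|\le C|b|F_2(t)^{1/2}$, and the lemma follows.

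The main obstacle is Step 3, the uniform positive definiteness of $A(t)$. The danger is that $\hat h$ could concentrate near two antipodal points, where the weight vanishes. The uniform exponential-integrability bound for $-\hat v$ from Lemma~\ref{lem:3.2}, which in turn relies on the normalization \eqref{23} through Proposition~\ref{prop:3.1}, is exactly what excludes this. Some care is also needed to justify dropping the rotational (Killing) part of $\hat\xi$, following the normalization choice in \cite{MS2006}.
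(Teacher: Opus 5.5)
Your proposal is correct and follows the same route as the paper: differentiate the normalization \eqref{23} to obtain \eqref{34}, bound the curvature term by $CF_2(t)^{1/2}$ via Cauchy--Schwarz, and then invert the linear map $b\mapsto Ab$ using the finite-dimensionality of conformal fields together with the uniform bounds of Lemma~\ref{lem:3.2}. Your two additions are exactly the details the paper defers to \cite{NS2024,H2012,CX2011}. The first is the explicit uniform coercivity of $A(t)$, obtained from H\"older's inequality and $\sup_t\int_{\S^3}e^{-3\hat v}\,\d s_{g_{\S^3}}<\infty$. The second is the removal of the Killing part of $\hat\xi$, which \eqref{34} cannot detect because rotations preserve \eqref{23}; some such normalization is implicitly required for the lemma to hold at all.
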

	\begin{proof}	
		Differentiating \eqref{23} with respect to $t$
		and using   \eqref{33}, we obtain
		\begin{align}
			0=&\,\frac{\d}{\d t}\Big(\int_{\S ^3}x\, \d s _{\hat h}\Big)=\, 3\int_{\S ^3}x\pa _t \hat v\, \d s _{\hat h}\notag\\
			=&\,3\int_{\S ^3}x\pa _t \hat w\circ \hat\Phi \, \d s _{\hat h}+\int_{\S ^3}x\,\operatorname{div}_{g_{\S ^3}}(\hat \xi e^{3\hat v})\, \d s _{g_{\S ^3}}\notag\\
			=&\, 3\int_{\S ^3}x\pa _t \hat w\circ \hat\Phi \, \d s _{\hat h}-\int_{\S ^3}\hat \xi \, \d s _{\hat h}.\label{34}
		\end{align}
		Following the argument in  \cite{NS2024} (see also \cite{H2012,CX2011}), and combining \eqref{34} with \eqref{flow3} and Lemma \ref{lem:3.2}, we obtain the uniform estimate
		\[
		\|\hat \xi(\cdot,t)\|_{L^{\infty}(\S ^3)}^2\leq C\int_{\S ^3}|\pa _t \hat w\circ \hat \Phi|^2\, \d s _{\hat h}
		=C\int_{\S ^3}|\al  f-T_{g}|^2\, \d s _{\hat g},
		\]
		which completes the proof.
	\end{proof}

	\section{Blow-up analysis}\label{sec:4}	
	As in  \cite{MS2006,H2012,XZ2016,S2005,CX2011}, suppose that the flow \eqref{flow3} does not converge for any initial data $w_0$ with
	$E_f  [w_0] \leq  \beta$ for a suitable real number $\beta$ to be determined later, then the standard
	Morse theory argument leads to conclude that system \eqref{eq:system} would hold for some nonnegative
	$k_i$ with $0 \leq i \leq 3$, which contradicts our assumption. As the first step for
	this purpose, we need to focus on the blow-up analysis.

In view of \eqref{samecurvature} and Lemma \ref{lem:3.4},  we adapt the argument of  \cite[Proposition 1.4]{B2003} to give a  characterization of the asymptotic behavior of sequences of functions $\{w_k\}$ whose $T$-curvatures  are bounded in $L^2(\S^3, \hat g_k)$.

	\begin{lem}\label{lem:3.5}
		Let $\{w_k\}$ be a sequence of smooth functions on $\B^4$ with associated metrics $g_k=e^{2 w_k} g_{*}$. Assume that $Q_{g_k}=0$ in $\B^4$, $H_{g_k}=0$ on $\S^3$, $\vol(\S^3, \hat g_k)=2\pi^2$ and $\|T_{g_k}\|_{L^2(\S^3, \hat g_k)} \leq C$ for some constant $C>0$ independent of $k$. Then either there exists a subsequence (still denoted by $\{w_k\}$) which is bounded in $H^4(\B^4,g_{\B^4})$, or for every sequence $k \to  +\infty$ we can find a subsequence  (still denoted by $\{w_k\}$) and some points $p_1, \ldots, p_L \in \S^3$ such that for every $r>0$ and any $i \in\{1, \ldots, L\}$ there holds	
		\be\label{47}
		\liminf _{k \to  +\infty} \int_{B_r(p_i)}|T_{g_k}| \, \d s_{\hat g_k} \geq 2 \pi^2.
		\ee
		In the latter case,  a subsequence $\{w_k\}$ either converges in $H_{loc}^4(\B^4 \backslash\{p_1, \ldots, p_L\},g_{\B^4})$, or $w_k \to -\infty$ locally uniformly away from $p_1, \ldots, p_L$ as $k \to +\infty$.
	\end{lem}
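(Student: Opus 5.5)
We reduce the statement to a Brendle-type concentration–compactness lemma on $\S^3$ for the Beckner operator, and then lift the conclusions back to $\B^4$ through the biharmonic Neumann extension. Since $Q_{g_k}=0$ in $\B^4$, $H_{g_k}=0$ on $\S^3$ and $g_k=e^{2w_k}g_*$, each $w_k$ solves \eqref{maineq3} with $f$ replaced by $T_{g_k}$. So $w_k$ is the unique solution of $P^4_{g_{\B^4}}w_k=0$ with $\pa_\nu w_k=0$ and boundary trace $\hat w_k$. By \eqref{Beckner-3-equal} and \eqref{samecurvature}, $\hat w_k$ then satisfies
\[
\cP^3_{g_{\S^3}}\hat w_k+2=T_{g_k}e^{3\hat w_k}\quad\text{on }\S^3,\qquad \int_{\S^3}e^{3\hat w_k}\,\d s_{g_{\S^3}}=2\pi^2 .
\]
Elliptic regularity for the Neumann biharmonic problem gives $\|w_k\|_{H^4(\B^4)}\le C\|\hat w_k\|_{H^{7/2}(\S^3)}$, together with local versions of this bound near boundary points and interior estimates. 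It therefore suffices to prove the dichotomy for $\hat w_k$ in $H^{7/2}(\S^3)$, or equivalently in $H^3$ after the bootstrap below.

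\textbf{Step 1: the concentration set.} Let $\mu_k:=|T_{g_k}|\,\d s_{\hat g_k}$. By H\"older's inequality and the volume normalization, $\mu_k(\S^3)\le C\sqrt{2\pi^2}$. We say $p$ is a concentration point if $\liminf_k\mu_k(B_r(p))\ge 2\pi^2$ for every $r>0$. A diagonal argument on a countable dense set of centers and rational radii lets us pass to a subsequence along which the set of such points is well defined. The mass bound shows it is finite, say $\{p_1,\dots,p_L\}$, and \eqref{47} holds by definition.

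\textbf{Step 2: local estimates away from concentration.} This is the main step. Suppose $x_0$ is not a concentration point, so that $\int_{B_r(x_0)}|T_{g_k}|\,\d s_{\hat g_k}\le 2\pi^2-\delta$ for some $r>0$ and infinitely many $k$. We split $\hat w_k=w^1_k+w^2_k+\bar w_k$. Here $w^1_k$ solves $\cP^3 w^1_k=\chi_{B_r(x_0)}T_{g_k}e^{3\hat w_k}-\text{(its mean)}$, $w^2_k$ carries the remaining right-hand side (which is smooth on $B_{r/2}(x_0)$), and $\bar w_k$ is the mean value. The Green function of $\cP^3$ on $\S^3$ behaves like $\frac{1}{2\pi^2}\log\frac1{|x-y|}$. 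A Brezis--Merle type inequality for this kernel then gives $\int e^{3(1+\epsilon)|w^1_k|}\le C$ when the local mass is strictly below $2\pi^2$ (with $\epsilon$ depending on $\delta$). This is the constant $2\pi^2$ in \eqref{47}. Because $\mu_k(\S^3)$ is bounded, $w^2_k$ is bounded in $L^\infty(B_{r/2}(x_0))$ modulo its mean. Combining these, $e^{3(\hat w_k-\bar w_k)}$ is bounded in $L^{1+\epsilon}(B_{r/2})$. Since $T_{g_k}e^{3\hat w_k/2}$ is bounded in $L^2$, the product $T_{g_k}e^{3(\hat w_k-\bar w_k)}$ is bounded in some $L^q$ with $q>1$. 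Local $L^p$ theory for $\cP^3$ then gives $W^{3,q}$ bounds on $\hat w_k-\bar w_k$, hence $L^\infty$ bounds, and a bootstrap using $T_{g_k}\in L^2$ gives local $H^3$ bounds on $B_{r/4}(x_0)$. I expect this step to be the hardest: adapting Brezis--Merle to the nonlocal operator $\cP^3$ with sharp constant $2\pi^2$, and controlling the error from the cutoff. I would first follow the argument of \cite[Proposition 1.4]{B2003} for $\S^4$, with the Green function of $\cP^3$ (known explicitly via Beckner) replacing that of the Paneitz operator.

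\textbf{Step 3: conclusion.} If $L=0$, Step 2 holds at every point. Covering $\S^3$ by finitely many balls, we get that $\hat w_k-\bar w_k$ is bounded in $H^3$. Jensen's inequality and the volume constraint bound $\bar w_k$ from above. For a lower bound, $2\pi^2=\int e^{3\hat w_k}\le e^{3\bar w_k}\int e^{3(\hat w_k-\bar w_k)}$ with the last integral bounded. Hence $\hat w_k$ is bounded in $H^3$, and the biharmonic extension bounds $w_k$ in $H^4(\B^4,g_{\B^4})$; this is the first alternative. Otherwise, on compact subsets of $\S^3\setminus\{p_i\}$ the function $\hat w_k-\bar w_k$ is locally bounded in $H^3$, and $\bar w_k$ is bounded above. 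Passing to a subsequence, either $\bar w_k$ stays bounded, in which case $w_k$ converges in $H^4_{loc}(\B^4\setminus\{p_1,\dots,p_L\})$, or $\bar w_k\to-\infty$, in which case $w_k\to-\infty$ locally uniformly away from the $p_i$. In both cases we transfer from the boundary to the interior by using the Poisson-type representation of the Neumann biharmonic extension, whose kernel is smooth away from the singular boundary points.
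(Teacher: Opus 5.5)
The step that fails is the last one in Step 3: ``$\hat w_k$ is bounded in $H^3$, and the biharmonic extension bounds $w_k$ in $H^4(\B^4,g_{\B^4})$.'' As you note yourself at the start, an $H^4(\B^4)$ bound on the Neumann biharmonic extension requires the traces to be bounded in $H^{7/2}(\S^3)$, i.e.\ $T_{g_k}e^{3\hat w_k}$ bounded in $H^{1/2}(\S^3)$. But $\cP^3_{g_{\S^3}}\hat w_k=T_{g_k}e^{3\hat w_k}-2$, with a right-hand side controlled only in $L^2$, gives exactly $H^3(\S^3)$, hence only $H^{7/2}(\B^4)$, and no bootstrap can improve this.

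In fact the hypotheses do not imply the $H^4$ alternative at all. Take $\hat w_k=c_k+n_k^{-3}Y_{n_k}$, where $Y_n$ is an $L^2$-normalized spherical harmonic of degree $n$, $n_k\to\infty$, and $c_k\to0$ is chosen so that $\vol(\S^3,\hat g_k)=2\pi^2$. Let $w_k$ be the extension with $P^4_{g_{\B^4}}w_k=0$ and $\pa_\nu w_k=0$. Then:
\begin{itemize}
\item $Q_{g_k}=0$ and $H_{g_k}=0$;
\item $\|\hat w_k\|_{L^\infty}\le|c_k|+Cn_k^{-2}\to0$, since $\|Y_n\|_{L^\infty}\le Cn$;
\item $T_{g_k}=e^{-3\hat w_k}(\cP^3_{g_{\S^3}}\hat w_k+2)$ is bounded in $L^2$, so $\int_{B_r(p)}|T_{g_k}|\,\d s_{\hat g_k}\le Cr^{3/2}$ and no point concentrates;
\item yet $\|\hat w_k\|_{H^{7/2}(\S^3)}\sim n_k^{1/2}\to\infty$, so by the trace theorem no subsequence is bounded in $H^4(\B^4,g_{\B^4})$.
\end{itemize}

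So $H^4$ and $H^4_{loc}$ must be replaced by $H^{7/2}$ and $H^{7/2}_{loc}$ (equivalently, $H^3$ for the traces on $\S^3$). The $H^4$ comes from the closed-manifold results of Brendle and Malchiodi--Struwe, where a fourth-order operator acts on a four-manifold. The paper's proof passes to the boundary via $\cP^3_{g_{\S^3}}\hat v=P^{3,b}_{g_{\B^4}}v$ and then defers to Brendle's Proposition 1.4 and Struwe's Theorem 3.2 with details omitted, so it does not address this either.

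With that correction, your argument is the one the paper has in mind, and Steps 1--2 are a sound fleshing-out. The kernel $\frac{1}{2\pi^2}\log\frac{1}{|x-y|}$ of $\cP^3_{g_{\S^3}}$ makes Brezis--Merle give $e^{3(1+\epsilon)|w^1_k|}\in L^1$ exactly when the local $|T|$-mass stays below $2\pi^2$. Two small fixes remain:
\begin{itemize}
\item The quantity you must bound in $L^q$ is $T_{g_k}e^{3\hat w_k}=(T_{g_k}e^{3\hat w_k/2})\,e^{3\bar w_k/2}\,e^{3(\hat w_k-\bar w_k)/2}$, using $\bar w_k\le 0$. It is not $T_{g_k}e^{3(\hat w_k-\bar w_k)}$, which carries a factor $e^{-3\bar w_k}$ that blows up when $\bar w_k\to-\infty$.
\item With only $L^2$ control of $T_{g_k}$, the convergence away from the $p_i$ is weak in $H^{7/2}_{loc}$ and strong only in lower norms.
\end{itemize}
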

	
	\begin{proof}
		Using the relation \eqref{samev}, the proof follows closely the arguments in 
		\cite[Proposition 1.4]{B2003} or \cite[Theorem 3.2]{S2002}. 
		The main difference  in the present setting is that one must address additional difficulties arising from the 
		pseudo-differential operator $\cP^{3}_{\hat g}$. 
		These issues will be treated in the proofs of the next two lemmas. 
		For brevity, we omit the details here.
	\end{proof}
	
	Following the same  strategy as the proof of Lemma 3.6 in \cite{MS2006}, we  have
	\begin{lem}\label{lem:3.6-1}
		Let $\{\hat w_k\}$ be a sequence of smooth functions on $\S ^3$, and let 
		$\hat g_k=e^{2\hat w_k}g_{\S ^3}$ be the corresponding sequence of metrics.
		Assume that $\vol (\S^3,\hat{g}_k)=2\pi^2$ and 
		$\|Q^3_{\hat g_k}-Q^3_\infty\|_{L^2(\S ^3,\hat{g}_k)}\to 0$ as $k\to+\infty$
		for some smooth positive function $Q^3_\infty$ on $\S ^3$.
		Let $\hat\Phi_k:\S^3\to\S^3$ be conformal diffeomorphisms such that, with
		$\hat h_k:=\hat\Phi_k^*\hat g_k=e^{2\hat v_k}g_{\S^3}$, the normalization  \eqref{23}
		holds. Then, after passing to a subsequence, one of the following two possibilities must occur:\\
		\\  (i) Compactness: $\hat w_k\to \hat w_\infty$ in $H^{\frac{3}{2}}(\S ^3,g_{\S^3})$, and
		$Q^3_{\hat g_\infty}=Q^3_\infty$ for $\hat g_\infty:=e^{2\hat w_\infty}g_{\S^3}$.
		\\	(ii) Concentration: There exists a point $p\in \S ^3$ such that 
		\be\label{48-1} 
		\d s_{\hat g_k}\rightharpoonup 2\pi^2\delta_p\quad \mbox{ as }\, k\to+\infty
		\ee 
		in the weak sense of measures, and 
		\[
		\left\{\begin{aligned}
			&	\hat v_k\to 0&&\mbox{ in } \, H^{\frac{3}{2}}(\S ^3,g_{\S ^3}),\\
			&Q^3_{\hat h_k}\to 2&& \mbox{ in } \, L^2(\S ^3,g_{\S ^3}).	
		\end{aligned}\right.
		\]
		In this case,  the boundary maps $\hat \Phi_k$ converge weakly in $H^{\frac{3}{2}}(\S ^3,g_{\S ^3})$ 
		to the constant map $ p$. 
	\end{lem}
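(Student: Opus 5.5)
The plan is to follow the proof of Lemma 3.6 in \cite{MS2006}, working only on $\S^3$ with the Beckner operator $\cP^3_{g_{\S^3}}$. The key inputs are the conformal invariance \eqref{2.7} of $\hat E$, the improved Ache--Chang/Widom inequality of Proposition \ref{prop:3.1} under the balancing condition \eqref{23}, and the concentration--compactness alternative of Lemma \ref{lem:3.5}.

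\textbf{Step 1: uniform bounds on the normalized sequence.} Since $\vol(\S^3,\hat h_k)=2\pi^2$ and $Q^3_{\hat h_k}=Q^3_{\hat g_k}\circ\hat\Phi_k$, we get $\|Q^3_{\hat h_k}\|_{L^2(\S^3,\hat h_k)}\le C$. The first step is to bound $\hat v_k$ in $H^{3/2}$. Testing $\cP^3\hat v_k+2=Q^3_{\hat h_k}e^{3\hat v_k}$ against $\hat v_k$ and combining with Proposition \ref{prop:3.1} (with $a<1$) and Jensen's inequality yields, as in Lemma \ref{lem:3.2}, that $\|\hat v_k\|_{H^{3/2}}\le C$ and $\int e^{3\sigma\hat v_k}\le C(\sigma)$. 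As in Lemma \ref{lem:3.4}, the right-hand side $Q^3_{\hat h_k}e^{3\hat v_k}$ is then bounded in $L^p$ for $p\in(1,2)$. Hence $\hat v_k$ is bounded in $W^{3,p}\subset C^{0,\gamma}$, and in particular $\hat h_k$ is uniformly equivalent to $g_{\S^3}$. Passing to a subsequence, $\hat v_k\to\hat v_\infty$ in $H^{3/2}$ and uniformly.

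\textbf{Step 2: the dichotomy for the conformal maps.} Pass to a subsequence so that either $\hat\Phi_k$ stays in a compact subset of the conformal group, or $\hat\Phi_k$ degenerates. In the compact case, $\hat\Phi_k\to\hat\Phi_\infty$ smoothly. Then $\hat w_k=\hat v_k\circ\hat\Phi_k^{-1}-\frac13\log\det(\d\hat\Phi_k^{-1})$ converges in $H^{3/2}$, and the equation passes to the limit weakly, since $Q^3_{\hat g_k}e^{3\hat w_k}\to Q^3_\infty e^{3\hat w_\infty}$ in $L^1$ by the $L^2$ convergence and the uniform bounds. This gives (i).

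In the degenerate case, write $\hat\Phi_k$ as a rotation composed with a dilation of strength $\to\infty$ centred at a point $p_k\to p$. The volume form of $\hat g_k=(\hat\Phi_k^{-1})^*\hat h_k$ is then the push-forward by $\hat\Phi_k$ of $\d s_{\hat h_k}$, whose density is bounded uniformly. Because $\hat\Phi_k$ maps everything outside a shrinking neighbourhood of a point $q$ into a shrinking neighbourhood of $p$, we obtain \eqref{48-1}. Also $\hat\Phi_k\rightharpoonup p$ weakly in $H^{3/2}$, by the conformal invariance of the $\dot H^{3/2}$-type energy of $\hat\Phi_k$ combined with the pointwise convergence to $p$ off $q$.

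\textbf{Step 3 (main obstacle): identifying the limit in the concentration case.} The hardest point is showing $\hat v_\infty=0$ and $Q^3_{\hat h_k}\to2$. Since $Q^3_{\hat h_k}=Q^3_{\hat g_k}\circ\hat\Phi_k$ and $Q^3_\infty\circ\hat\Phi_k\to Q^3_\infty(p)$ a.e., the $L^2$ hypothesis, which transfers to $\hat h_k$ by invariance, gives $Q^3_{\hat h_k}\to Q^3_\infty(p)=:c>0$ in $L^2$. Therefore $\hat v_\infty$ solves $\cP^3\hat v_\infty+2=ce^{3\hat v_\infty}$ with $\vol=2\pi^2$ and the balancing condition \eqref{23}. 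Integrating gives $c=2$. The classification of solutions of the constant $Q^3$-equation on $\S^3$ (via the method of moving spheres or Beckner's inequality, e.g. \cite{CY1997,XZ2016}) shows that $e^{2\hat v_\infty}g_{\S^3}$ is the pull-back of the round metric by a conformal map. The balancing condition \eqref{23} then forces that map to be an isometry, so $\hat v_\infty=0$. Finally, strong $H^{3/2}$ convergence follows by testing the equation with $\hat v_k-\hat v_\infty$ and using the uniform convergence. The $L^2$ convergence of $Q^3_{\hat h_k}$ to $2$ was obtained above.
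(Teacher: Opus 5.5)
Your Step 1 is where the argument breaks, and Steps 2--3 depend on it. The uniform $H^{3/2}$-bound of Lemma \ref{lem:3.2} (i.e.\ \cite[Lemma 3.2]{MS2006}) does not come from testing the equation. It comes from the energy bound $E[v(t)]=E[w(t)]\le E_f[w_0]+\frac{16\pi^2}{3}\log M_f$, which holds only because $E_f$ is non-increasing along the flow (Lemma \ref{lem:2.1}), fed into Proposition \ref{prop:3.1}. The sequence in this lemma is arbitrary and carries no energy bound. Testing $\cP^3_{g_{\S^3}}\hat v_k+2=Q^3_{\hat h_k}e^{3\hat v_k}$ against $\hat v_k$ only gives the identity
\[
\|\hat v_k\|^2_{\dot H^{3/2}}+2\int_{\S^3}\hat v_k\,\d s_{g_{\S^3}}=\int_{\S^3}Q^3_{\hat h_k}\hat v_k\,\d s_{\hat h_k},
\]
which every solution satisfies. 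Proposition \ref{prop:3.1} bounds the left side below by $c\|\hat v_k\|^2_{\dot H^{3/2}}$. The right side, however, is only bounded by $\|Q^3_{\hat h_k}\|_{L^2(\S^3,\hat h_k)}\bigl(\int_{\S^3}\hat v_k^2e^{3\hat v_k}\,\d s_{g_{\S^3}}\bigr)^{1/2}$, and Beckner's inequality controls $\int\hat v_k^2e^{3\hat v_k}$ only up to a factor $\exp(C\|\hat v_k\|^2_{\dot H^{3/2}})$. For a concentrating standard bubble both sides grow like $\log(1/\var)$, with $\var$ the concentration scale, so the estimate does not close. The bounds $\int e^{3\sigma\hat v_k}\le C(\sigma)$ that you then use for the $L^p$ step are exactly what has not been proved.

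The missing idea is quantization. Balancing alone cannot exclude concentration of $\hat v_k$: a metric whose volume splits equally between two antipodal concentration regions is balanced. What rules this out is the $L^2$ control of $Q^3$. Exploiting that control requires showing that every blow-up profile is a standard bubble carrying the full total curvature $4\pi^2$, and this is the core of the paper's proof:
\begin{itemize}
\item Apply Lemma \ref{lem:3.5} to $\hat w_k$.
\item In the concentration case, rescale at the scale $r_k$ where $\int_{B_{r_k}(p_k)}|Q^3_{\hat g_k}|\,\d s_{\hat g_k}=\pi^2$ and pass to $\R^3$. This gives a finite-volume solution of $(-\Delta_{\R^3})^{3/2}\hat w_\infty=Q^3_\infty(p)e^{3\hat w_\infty}$.
\item Classify this solution via \cite[Theorem 4]{JMMX2015}, after proving $\Delta_{\R^3}\hat w_\infty\to0$ at infinity through the Green-function estimate \eqref{observation}.
\item Then $\int_{\R^3}Q^3_\infty(p)e^{3\hat w_\infty}\,\d y=4\pi^2$ forces single-point concentration, $Q^3_\infty(p)=2$, and \eqref{48-1}.
\item Only after that are \cite{CD1987} and Lemma \ref{lem:3.5} applied to $\hat v_k$, with \eqref{23} now ruling out concentration.
\end{itemize}
Given a uniform bound on $\hat v_k$, your Steps 2--3 would be fine, and getting $c=2$ by integrating the limit equation on $\S^3$ is a nice shortcut. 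But you have to supply that bound through this blow-up/classification argument or an equivalent one, not through the energy argument of Lemma \ref{lem:3.2}.
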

	
	\begin{proof} 
		First, Lemma \ref{lem:3.5} can be applied to $\hat w_k$ in view of \eqref{samecurvature}. If $\hat w_k$ is uniformly bounded in $H^{\frac{3}{2}}(\S^3, g_{\S^3})$, from the assumption  $Q^3_{\hat g_k} \to  Q^3_{\infty}$ in $L^2(\S^3, g_{\S^3})$ as $k \to + \infty$, we may use \eqref{Q-curvature-eq}
		and standard elliptic estimates for the pseudo-differential operator $\cP^3_{g_{\S^3}}$
		to obtain, after passing to a subsequence, $\hat w_k \to  \hat w_{\infty}$ in $H^{\frac{3}{2}}(\S^3, g_{\S^3})$ as $k \to  +\infty$.  Consequently, $\hat g_k=e^{2\hat w_k}g_{\S^3}\to \hat g_\infty:=e^{2\hat w_\infty}g_{\S^3}$ in
		$H^{\frac{3}{2}}(\S^3,g_{\S^3})$ as $k\to+\infty$.
		
		Otherwise, if concentration occurs in the sense of \eqref{47}, we need to show the desired behavior in (ii). For each $k$, select $p_k \in \S^3$ and $r_k>0$ such that
		\be\label{49-1}
		\sup _{p \in \S^3} \int_{B_{r_k}(p)}|Q^3_{\hat g_{k}}| \d s_{\hat g_k}=\int_{B_{r_k}(p_k)}|Q^3_{\hat g_{k}}| \d s_{\hat g_k}=\pi^2,
		\ee 
		then $r_k \to  0$ as $k \to  +\infty$ by \eqref{47}. Also it may assume $p_k \to  p$ as $k \to  +\infty$, thus $p$ cannot be any of these blow-up points $p_l$, $1 \leq l \leq L$ described in Lemma \ref{lem:3.5}. For brevity, one may regard $p$ as the north pole $\mathcal{N}$ on $\S^3$.
		
		Denote by $\tilde{\Phi}_k: \S^3 \to  \S^3$ the conformal diffeomorphisms mapping the upper hemisphere $\S_{+}^3 \equiv \S^3 \cap\{x^{4}>0\}$ into $B_{r_k}(p_k)$ and taking the equatorial sphere $\pa \S_{+}^3$ to $\pa B_{r_k}(p_k)$.  Consider the sequence of functions $\tilde{w}_k:\S^3\to \R$
		\[\tilde{w}_k=\hat w_k\circ \tilde \Phi_k+\frac{1}{3}\log (\operatorname{det}(\d \tilde \Phi_k)),\]
		which satisfy the equation
		\[
		\cP_{g_{\S^3}}^3 \tilde{w}_k+Q^3_{g_{\S^3}}=\hat{Q}_{k} e^{3 \tilde{w}_k} \quad \text { on }\, \S^3,
		\]
		where $\hat{Q}_k=Q^3_{\hat g_k} \circ \tilde{\Phi}_k$. From the selection of $r_k, p_k$ and \eqref{49-1}, by applying Lemma \ref{lem:3.5} and \eqref{samecurvature} to $\tilde{w}_k$, we conclude that $\tilde{w}_k \to  \tilde{w}_{\infty}$ in $H_{loc}^3(\S^3 \backslash\{\mathcal{S}\}, g_{\S^3})$ as $k \to  +\infty$, where $\mathcal{S}$ is the south pole on $\S^3$. Meanwhile, $\hat{Q}_k \to  Q^3_{\infty}(p)$ almost everywhere as $k \to  +\infty$.  Using the map  \be\label{PsiR^3}
		\hat \Psi(y):=\frac{1}{1+|y|^2}(2y_1,2y_2,2y_3,1-|y|^2),\quad y=(y_1,y_2,y_3)\in\R ^3,
		\ee
		and defining 
		\[
		\hat w_k:=\tilde w_k \circ \hat \Psi+\frac{1}{3} \log (\operatorname{det} \d \hat \Psi),
		\]
		we obtain $\hat w_k$ converging in $H_{loc}^3(\R^3)$ to a function $\hat{w}_{\infty}$, which satisfies the equation
		\be\label{51-1}
		(-\Delta_{\R^3})^{\frac{3}{2}} \hat{w}_{\infty}=Q^3_{\infty}(p) e^{3 \hat{w}_{\infty}} \quad \text { in }\, \R^3 .
		\ee
		Moreover, by Fatou's lemma we get
		\be\label{52-2} 
		\int_{\R^3} e^{3 \hat{w}_{\infty}}\, \d y \leq  \liminf _{k \to  +\infty} \int_{\R^3} e^{3 \tilde{w}_k} \,\d y=2\pi^2 .
		\ee

\textbf{Claim:} $\hat{w}_{\infty} $ is of the form   
		\be\label{classification-eq}
		\hat  w_{\infty}(y)=\log \Big(\frac{2\lam}{1+\lam^2|y-y_0|^2}\Big)-\frac{1}{3}\log \frac{Q^3_{\infty}(p)}{2}
		\ee
		for some $\lam>0$ and $y_0\in\R^3$.

		Indeed, by \cite[Theorem 4]{JMMX2015}, we only need to prove that 
		\[
		\lim_{|y|\to +\infty}\Delta_{\R^3} \hat  w_{\infty}(y)=0
		\]	 
		For simplicity, one uses $ w_{\infty}$ instead of  $\hat  w_{\infty}$.

		Firstly, we deduce from  \cite[Lemma 18]{JMMX2015} that 
		\[
		- \Delta_{\R^3} w_{\infty}(y)\geq 0 \quad \text{ in }\,\R^3.
		\]
		Set
		\[
		v(y):=-\frac{1}{2\pi^2}\int_{\R^3}\log \Big(\frac{|y-x|}{|x|}\Big)Q^3_{\infty}(p)e^{3w_{\infty}(x)}\, \d x.
		\] 
		Obviously, $v(y)$ satisfies that
		\[(-\Delta_{\R^3})^{\frac{3}{2}}v(y)=Q^3_{\infty}(p)e^{3w_{\infty}(y)}\quad \text{ in }\, \R^3.\]
		It follows from \cite[Theorem 3]{JMMX2015} (see also \cite[p.1763, Remark]{Z2004}) that 
		\[\lim_{|y|\to +\infty}\Delta_{\R^3}v(y) =0. \]
		It is easy to see that $(-\Delta_{\R^3})^{\frac{3}{2}}(v-w_{\infty})=0$ in $\R^3$, and by \cite[Lemma  15]{JMMX2015}  we have $-\Delta_{\R^3} (v-w_{\infty})\equiv b$  in $\R^3$ for some  constant $b$.  It remains to prove $b=0$.

		We claim that, for any
		$r >0$ small and $q \in \S^3$,  there exists a constant $C_0>0$,
		independent of $k$, such that
		\be\label{observation}
		\Big|\int_{B_r(q)} \Delta_{\S^3} \hat w_k \, \d s_{g_{\S^3}}\Big| \leq C_0 r .
		\ee

Indeed, using the equation
		\[
		(-\Delta_{g_{\S^3}}+1)^{1/2}(-\Delta_{g_{\S^3}} \hat  w_k)=Q_{\hat g_{k}}^3 e^{3 \hat w_k}-Q^3_{g_{\S^3}}:=F_k\quad \text { on }\, \S^3 ,
		\]
and letting  $G(\cdot, z)$ be the Green function of the operator $(-\Delta_{g_{\S^3}}+1)^{1/2}$. It is known that  $G(x,z)>0$ for $x\neq z$, and moreover, 
		\[
		G(x, z) \sim  c\operatorname{dist}_{g_{\S^3}}(x,z)^{-2} \quad \text { as }\,\operatorname{dist}_{g_{\S^3}}(x,z) \to  0,
		\]
		where $c>0$ is a positive constant.
		More precisely,
		\[
		G(x, z)=c\operatorname{dist}_{g_{\S^3}}(x,z)^{-2}+h(x, z),
		\]
		where $h(y, z)$ is a smooth function defined on $\S^3$. 
		
		By Green's formula, we obtain
		\[
		-\Delta_{\S^3} \hat w_k(z)=\int_{\S^3} G(x, z)F_k(x) \, \d s_{g_{\S^3}}(x).
		\]
		Then, for any $q \in \S^3$ and $r>0$, by Fubini's theorem we have
	\[\Big|\int_{B_r(q)}(-\Delta_{\S^3} \hat w_k(z)) \, \d s_{g_{\S^3}}(z)\Big|
	\leq  \int_{\S^3}|F_k(x)|\Big(\int_{B_r(q)} G(x, z) \,\d s_{g_{\S^3}}(z)\Big)\, \d s_{g_{\S^3}}(x).\]	
	Since 	the singularity $\operatorname{dist}(x,z)^{-2}$ is integrable on the
		three-dimensional sphere, we have the uniform estimate
		\be\label{G-integral-est}
			\sup_{x\in\S^3}\int_{B_r(q)}G(x,z)\,ds_{g_{\S^3}}(z)\le Cr.
		\ee	
		Indeed, if $x\in B_{2r}(q)$, then $B_r(q)\subset B_{3r}(x)$ and
		\[
		\int_{B_r(q)} \operatorname{dist}(x,z)^{-2}\,\d s_{g_{\S^3}}(z)
		\le
		\int_{B_{3r}(x)} \operatorname{dist}(x,z)^{-2}\,\d s_{g_{\S^3}}(z)
		\le C\int_0^{3r}\rho^{-2}\rho^2\,\d \rho
		\le Cr.
		\]
		If instead $x\notin B_{2r}(q)$, then $\operatorname{dist}(x,z)\ge r$ for all $z\in B_r(q)$, hence
		\[
		\int_{B_r(q)} \operatorname{dist}(x,z)^{-2}\,\d s_{g_{\S^3}}(z)
		\le r^{-2}\vol(B_r(q))
		\le Cr.
		\]
		Thus \eqref{G-integral-est} holds.
		
		Combining \eqref{G-integral-est} with the uniform $L^1$-bound	$\|F_k\|_{L^1(\S^3)}\le C$,
		which follows from $\vol(\S^3,\hat g_k)=2\pi^2$ and the convergence
		$\|Q^3_{\hat g_k}-Q^3_\infty\|_{L^2(\S^3,\hat g_k)}\to0$ together with the smoothness of
		$Q^3_\infty$, we prove  the validity of \eqref{observation}.

		Let $\phi:=v-w_\infty$. Then $-\Delta_{\R^3}\phi\equiv b$ in $\R^3$.
		Assume by contradiction that $b\neq 0$. Since $\lim_{|y|\to\infty}\Delta_{\R^3}v(y)=0$, we have
		\[
		\Delta_{\R^3} w_\infty(y)=\Delta_{\R^3}v(y)+b \to b \qquad \text{as }|y|\to\infty.
		\]
		In particular, if $b>0$, there exists $R_0>0$ such that $\Delta_{\R^3}w_\infty \ge b/2$ in $\R^3\setminus B(0,R_0)$, hence
	\be\label{lowerbound}
		\int_{B(0,d)}\Delta_{\R^3}w_\infty(y)\,\d y \ge c\,b\,d^3
		\quad \text{ for all } \,d\gg 1.
\ee
		On the other hand, by the blow-up procedure (via stereographic coordinates) and the local
		expansion of $g_{\S^3}$ around $p_k$, for any fixed $d>0$ we have
		\[
		\Big|\int_{B(0,d)}\Delta_{\R^3}w_\infty(y)\,\d y\Big|
	\le 	\liminf_{k\to\infty}\frac1{r_k}\Big|\int_{B_{r_k d}(p_k)}\Delta_{\S^3}\hat w_k \, \d s_{g_{\S^3}}\Big|
		\le C_0 d,
		\]
	where we used \eqref{observation} in the last inequality.	This contradicts \eqref{lowerbound} by letting $d\to\infty$. The case $b<0$ can be ruled out analogously.
		Therefore $b=0$, and we complete the proof of Claim.

		Since $\|Q^3_{\hat g_k}-Q^3_\infty\|_{L^2(\S ^3,\hat{g}_k)}\to 0$ as $k\to+\infty$, we deduce from \eqref{classification-eq} that
		\begin{align*}
			4\pi^2  =&\,\int_{\R^3} Q^3_{\infty}(p) e^{3 \hat w_{\infty}} \,\d y \leq  \int_{B_r(p)} Q^3_{\infty}\, \d s_{\hat{g}_k}+o(1) \\
			\leq &\,\int_{\S^3} Q_{\infty}^3 \, \d s_{\hat{g}_k}+o(1) \leq \int_{\S^3}(Q_{\hat g_{k}}^3+|Q_{\hat g_{k}}^3-Q_{\infty}^3|) \, \d s_{\hat{g}_k}+o(1) \\
			\leq&\,\int_{\S^3} Q_{\hat g_{k}}^3  \, \d s_{\hat{g}_k}+o(1)=4\pi^2+o(1).
		\end{align*}
	Moreover, 	since $\dashint_{\S ^3}\,  \d s_{\hat g_k}=1$, it implies that $p$ is the only concentration point of $\hat w_k$ and that
		\[
		Q^3_{\infty}(p)=2\quad  \text{ and }\quad \d s_{\hat g_k} \rightharpoonup 2\pi^2 \delta_p \quad \text { as }\, k \to  +\infty
		\]
		in the weak sense of measures, which proves  \eqref{48-1}.  By \eqref{48-1} and Proposition 3.4 of \cite{CD1987}, it follows that the family of conformal transformations $\hat \Phi_k$ associated with normalized metrics converges to a constant map $p$. In essence, using conformal invariance of $\cP_{g_{\S^3}}^3$, one has
		\begin{align*}
			\int_{\S^3} \hat \Phi_k \cP_{g_{\S^3}}^3(\hat  \Phi_k) \, \d s_{g_{\S^3}}  =&\,\int_{\S^3} x \cP_{(\hat \Phi_k^{-1})^*(g_{\S^3})}^3(x)  \, \d s_{(\hat \Phi_k^{-1})^*(g_{\S^3})} \\
			=&\,\int_{\S^3} x \cP_{g_{\S^3}}^3(x) \, \d s_{g_{\S^3}} \\
			=&\,\int_{\R^3} \hat \Psi \cP_{\hat\Psi^*(g_{\S^3})}^3(\hat \Psi)  \, \d_{\hat\Psi^* (g_{\S^3})} \\
			=&\,\int_{\R^3}|(-\Delta_{\R^3})^{\frac{3}{4}} \hat \Psi |^2 \, \d y<\infty.
		\end{align*}
		Hence, up to a subsequence, $\hat \Phi_k$ converges weakly in $H^{\frac{3}{2}}(\S^3, g_{\S^3})$ to the constant map $p$ as $k \to  +\infty$.
		
		From \eqref{48-1}, as $k \to  +\infty$ we have
		\[
		\|Q^3_{\infty} \circ \hat  \Phi_k-Q^3_{\infty}(p)\|_{L^2(\S^3, \hat h_k)}=\|Q^3_{\infty}-Q^3_{\infty}(p)\|_{L^2(\S^3, \hat g_k)} \to  0
		\]
		and then
		\begin{align*}
			\|Q^3_{\hat h_k} -Q^3_{\infty}(p)\|_{L^2(\S^3,\hat  h_k)} =&\,\|Q^3_{\hat h_k} -Q^3_{\infty} \circ \hat  \Phi_k\|_{L^2(\S^3, \hat h_k)}+o(1) \\
			=&\,\|Q^3_{\hat g_k}-Q^3_{\infty}\|_{L^2(\S^3, \hat g_k)}+o(1) \to  0,
		\end{align*}
		where $o(1) \to  0$ as $k \to  +\infty$. Consequently, one may apply Lemma \ref{lem:3.5} to $v_k$ and metrics $h_k$. Furthermore, condition \eqref{23} excludes concentration in the sense of  \eqref{47}. Thus a subsequence $\hat v_k \to  \hat v_{\infty}$, $\hat h_k \to \hat h_{\infty}$ in $H^{\frac{3}{2}}(\S^3, g_{\S^3})$ as $k \to  +\infty$. Since $\hat h_{\infty}$ has constant $Q$-curvature $Q^3_{\infty}(p)= 2$ and $\dashint_{\S^3} \,\d s_{\hat h_{\infty}}=1$, $\hat v_{\infty}$ has to be the constant 0, thus the proof is complete.
	\end{proof}

	Lemma \ref{lem:3.5} admits a considerably sharper form if one assumes that the $T$-curvatures associated with $\{w_k\}$ converge in $L^2$ to some smooth limiting function $T_\infty>0$.   To handle the pseudo-differential nature of $\cP^{3}_{\hat g}$, we adopt an alternative method based on the Dirichlet-to-Neumann map, realized via a biharmonic extension to the upper half-space $\R^4_+$.	
	\begin{lem}\label{lem:3.6-2}
		Let $\{w_k\}$ be a sequence of smooth functions on $\B ^4$, and let 
		$g_k=e^{2w_k}g_{*}$ be the corresponding sequence of metrics.
		Assume that  $Q_{g_k}=0$ in $\B^4$, $H_{g_k}=0$ on $\S^3$, and $\vol (\S^3,\hat{g}_k)=2\pi^2$. 	Assume also that
		$\|T_{g_k}-T_\infty\|_{L^2(\S ^3,\hat{g}_k)}\to 0$ as $k\to+\infty$
		for some smooth function $T_\infty$ on $\S ^3$.
		Let $h_k=\Phi_k^*(g_k)=e^{2v_k}g_{\B ^4}$ be the associated sequence of normalized metrics as in  Section \ref{sec:3.1}.
		Then, after passing to a subsequence, one of the following two possibilities must occur:\\
		(i) Compactness: $w_k\to w_\infty$ in $H^4(\B ^4,g_{\B^4})$, where $g_\infty:=e^{2w_\infty}g_{*}$ satisfies
		\[
		\left\{\begin{aligned}
			&Q_{g_{\infty}}=0 &&\mbox{ in } \, \B^4,\\
			&H_{g_{\infty}}=0&& \mbox{ on } \, \S^3,\\ &T_{g_{\infty}}=	T_\infty&&\mbox{ on } \, \S^3.
		\end{aligned}\right.
		\] \\ 
		(ii) Concentration:  There exists a point $p\in \S ^3$ such that 
		\[
		\d s _{\hat g_k}\rightharpoonup 2\pi^2\delta_p\quad \mbox{ as }\,k\to+\infty
		\]
		in the weak sense of measures. In addition,
		\[
		\left\{\begin{aligned}
			&v_k\to 0&&\mbox{ in }\, H^4(\B ^4,g_{\B ^4}),\\
			&T_{h_k}\to 2&&\mbox{ in }\, L^2(\S ^3,g_{\S ^3}).
		\end{aligned}\right.
		\]
		In this case, the boundary maps $\hat \Phi_k$ converge weakly in $H^{\frac{3}{2}}(\S ^3,g_{\S ^3})$ 
		to the constant map $p$. 
	\end{lem}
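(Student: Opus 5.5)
The plan is to transfer Lemma \ref{lem:3.6-1} from the boundary $\S^3$ to the interior $\B^4$, using the fact that each $w_k$ (and each $v_k$) is the unique biharmonic extension of its boundary trace with homogeneous Neumann data. By \eqref{samecurvature} and \eqref{Beckner-3-equal}, the restrictions $\hat w_k$ satisfy $\cP^3_{g_{\S^3}}\hat w_k+Q^3_{g_{\S^3}}=T_{g_k}e^{3\hat w_k}$ on $\S^3$. Here $Q^3_{\hat g_k}=T_{g_k}$ and $\vol(\S^3,\hat g_k)=2\pi^2$. Moreover, by \eqref{11} and the $L^2$ convergence, $\int_{\S^3}T_\infty\,\d s_{\hat g_k}\to 4\pi^2$.

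First I would check that the hypotheses of Lemma \ref{lem:3.6-1} hold with $Q^3_\infty:=T_\infty$. If $T_\infty$ is not positive, its proof still goes through: the only place positivity enters is the classification \eqref{classification-eq} at a concentration point $p$. There one gets $T_\infty(p)>0$ automatically, since otherwise \eqref{51-1} together with $\int e^{3\hat w_\infty}<\infty$ admits no solution; Liouville-type nonexistence for $(-\Delta)^{3/2}$ gives this. Applying Lemma \ref{lem:3.6-1} then yields one of two alternatives.

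In the compact alternative, $\hat w_k\to\hat w_\infty$ in $H^{3/2}(\S^3)$ and $Q^3_{\hat g_\infty}=T_\infty$. The next step is to upgrade the regularity. The right-hand side $T_{g_k}e^{3\hat w_k}$ is bounded in $L^p(\S^3)$ for $p<2$, by H\"older together with the exponential integrability coming from the $H^{3/2}$ bound via Adams/Moser--Trudinger on $\S^3$. The argument of Lemma \ref{lem:3.4} then gives $W^{3,p}$ and $L^\infty$ bounds. Since now $e^{3\hat w_k}$ is bounded, the right-hand side converges in $L^2(\S^3)$, so $\hat w_k\to\hat w_\infty$ in $H^3(\S^3)$. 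Next I pass to the interior. The biharmonic Neumann extension $\tilde v\mapsto \tilde w$ maps $H^{s}(\S^3)$ continuously into $H^{s+1/2}(\B^4)$. A cleaner route is elliptic regularity for $\Delta^2 w=0$ with boundary conditions $\pa_\nu w=0$ and $P^{3,b}_{g_{\B^4}}w=T_{g_k}e^{3\hat w_k}-2$ in $H^{1/2}$-type trace spaces. This gives $w_k\to w_\infty$ in $H^4(\B^4)$. For the $H^{1/2}$ trace of the right-hand side I may need an extra bootstrap, or I can use the $H^{7/2}$ trace norm of $\hat w$; since $T_\infty$ is smooth, one more derivative should follow by differentiating the boundary equation. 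The limit $g_\infty$ satisfies $Q=0$ and $H=0$ by construction, and $T_{g_\infty}=T_\infty$ by passing to the limit.

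In the concentrating alternative, Lemma \ref{lem:3.6-1} directly gives $\d s_{\hat g_k}\rightharpoonup2\pi^2\delta_p$, weak $H^{3/2}$ convergence of $\hat\Phi_k$ to $p$, $\hat v_k\to0$ in $H^{3/2}$, and $T_{h_k}=Q^3_{\hat h_k}\to2$ in $L^2$; here I use that \eqref{QHT-conformal} identifies $Q^3_{\hat h_k}$ with $T_{h_k}$. Because $v_k$ solves \eqref{flow5}, the same bootstrap applies. Indeed, $\cP^3\hat v_k=T_{h_k}e^{3\hat v_k}-2$, where $e^{3\hat v_k}$ is uniformly bounded in every $L^q$ by Lemma \ref{lem:3.2}, and the right-hand side tends to $0$ in $L^p$. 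This gives $\hat v_k\to0$ in $L^\infty$ and then in $H^3(\S^3)$, and the Neumann biharmonic extension yields $v_k\to0$ in $H^4(\B^4)$.

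The main obstacle is this final regularity step: achieving convergence in $H^4(\B^4)$ rather than $H^3$-type convergence. It requires control of the boundary data $T_{h_k}e^{3\hat v_k}$ in $H^{1/2}(\S^3)$ (or better) while knowing only $L^2$ convergence of the $T$-curvatures. I would try first the Dirichlet-to-Neumann realization on $\R^4_+$ mentioned before the lemma, in order to get sharp trace estimates. Failing that, I would fall back on parabolic smoothing along the flow, replacing $L^2$ by higher norms of $T$ as in Brendle's argument.
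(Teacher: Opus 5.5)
Your strategy differs from the paper's. The paper handles the concentration case by redoing the blow-up in four dimensions. It rescales $w_k$ by conformal maps of $\B^4$, pulls back to $\R^4_+$ via $\Psi$, and obtains a limit $W_\infty$ with $\Delta^2W_\infty=0$, $\pa_{y_4}\Delta W_\infty=2T_\infty(p)e^{3W_\infty}$, $\pa_{y_4}W_\infty=0$. It then identifies $W_\infty$ via the Ndiaye--Sun classification to get $T_\infty(p)=2$, and refers the compactness case and the remaining steps back to Lemma~\ref{lem:3.6-1}. You instead use that $w_k$ and $v_k$ are the unique Neumann--biharmonic extensions of their traces, and apply Lemma~\ref{lem:3.6-1} to $\hat w_k$ directly through \eqref{Beckner-3-equal} and \eqref{samecurvature}. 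This reduction is legitimate and more economical. It makes the half-space blow-up redundant and sidesteps the growth hypothesis $W_\infty(y',0)=o(|y'|^2)$ needed to invoke the half-space classification. The paper's detour is meant to avoid the pseudo-differential classification, but Lemma~\ref{lem:3.6-1} already relies on it, so logically nothing is gained.

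The gap is your last step, the upgrade to $H^4(\B^4)$, and your suggested fix fails. Differentiating $\cP^3_{g_{\S^3}}\hat w_k+2=T_{g_k}e^{3\hat w_k}$ produces derivatives of $T_{g_k}$, not of the smooth $T_\infty$, and $T_{g_k}$ is controlled only in $L^2$. In fact no argument can close this step under the stated hypotheses. By the trace theorem $\|w_k\|_{H^4(\B^4)}\gtrsim\|\hat w_k\|_{H^{7/2}(\S^3)}$. Since $T_{g_k}=e^{-3\hat w_k}(\cP^3_{g_{\S^3}}\hat w_k+2)$, an $H^{7/2}$ bound on $\hat w_k$ would force an $H^{1/2}$ bound on $T_{g_k}$.

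Here is a concrete counterexample. Let $\hat w_k=c_k+n_k^{-13/4}Y_{n_k}$, where $Y_{n_k}$ is an $L^2$-normalized spherical harmonic of degree $n_k\to\infty$ and $c_k\to0$ is chosen so that $\vol(\S^3,\hat g_k)=2\pi^2$. Let $w_k$ be the Neumann--biharmonic extension of $\hat w_k$. Then $Q_{g_k}=0$, $H_{g_k}=0$, $\hat w_k\to0$ uniformly, and $\cP^3_{g_{\S^3}}\hat w_k=O(n_k^{-1/4})Y_{n_k}$. So $T_{g_k}\to2$ in $L^2$ with no concentration, yet $\|w_k\|_{H^4(\B^4)}\gtrsim n_k^{1/4}\to\infty$, and neither alternative holds in $H^4$.

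What your bootstrap does prove is the correct form of the conclusion: $\hat w_k\to\hat w_\infty$ (resp.\ $\hat v_k\to0$) in $H^3(\S^3)$, hence convergence of the extensions in $H^{7/2}(\B^4)$. The paper's proof does not obtain $H^4$ either, since Lemma~\ref{lem:3.6-1}, to which it defers, gives only $H^{3/2}(\S^3)$ convergence of traces. Your fallback to parabolic smoothing would change the hypotheses, because the lemma concerns arbitrary sequences.

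There is also a smaller correction. Positivity of $T_\infty$ is not used only in the classification. Lemma~\ref{lem:3.6-1} also uses $Q^3_\infty\ge0$ in the bound $\int_{B_r(p)}Q^3_\infty\,\d s_{\hat g_k}\le\int_{\S^3}Q^3_\infty\,\d s_{\hat g_k}$, which is what yields $Q^3_\infty(p)=2$ and a single concentration point. You should therefore assume $T_\infty>0$, as the paper tacitly does; in its application $T_\infty=\al_\infty f$. Likewise, Lemma~\ref{lem:3.2} is proved using the energy bound along the flow. For an arbitrary sequence, the $L^q$ bounds on $e^{3\hat v_k}$ should instead come from $\hat v_k\to0$ in $H^{3/2}$ together with the Adams--Moser--Trudinger inequality.
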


	Let us introduce  the M\"obius transformation that maps the upper half–space
	\[\R_{+}^{4}:=\{Y=(y', y_4)\in\R^3\times\R: y_4\geq 0\}\]
	onto the unit ball
	\[\B^{4}=\{X=(x',x_4)\in\R^3\times\R:|X|\leq 1\}.\]
	For convenience, we denote this map by  $\Psi$, and write it explicitly as
	\be\label{Mobius-map}
	\Psi (Y):=\frac{2(Y+e_4)}{|Y+e_4|^2}-e_4, 
	\ee
	where $e_4:=(0, 0, 0,1) \in \R^{4}$. Its inverse  is given by 
	\[
	\Psi^{-1} (X)=\frac{2(X+e_4)}{|X+e_4|^2}-e_4 .
	\]
	It is well-known that
	\begin{gather*}
		\Psi^*(|\d X|^2)=\Big(\frac{2}{|Y+e_4|^2}\Big)^2|\d Y|^2 , \\
		|y'|^2+y_4^2=1-\frac{4 x_{4}}{|x'|^2+(x_{4}+1)^2}, \quad|X|^2=1-\frac{4 y_4}{|y'|^2+(y_4+1)^2},
	\end{gather*}
	see, e.g.,  \cite{NS2024}. The restriction of $\Psi $ to the boundary  $\pa \R_{+}^{4}=\R ^3$  is given explicitly by \eqref{PsiR^3}, 
	which is exactly the inverse stereographic projection from the south pole $\cS$. Similarly, the restriction of $\Psi^{-1}$  to   $\pa \B^4=\S^3 $ coincides with the stereographic projection from $\cS$. In particular,
	\be \label{75}
	\Psi^{-1}(x)=\frac{(x_1,x_2,x_3)}{1+x_4},\quad x=(x_1,x_2,x_3,x_4)\in\S ^3.
	\ee 
	Using M\"obius transformation \eqref{Mobius-map}, we can find a function $\tilde{w}$ such that $(\B^4 \backslash\{\cS\}, g)$ is isometric to $(\R_{+}^4, e^{2 \tilde{w}}|\d Y|^2)$ through the conformal relation $\Psi^*(e^{w}g_{*})=e^{2 \tilde{w}}|\d Y|^2$,
	where $w$ and $\tilde{w}$ are related by
	\begin{align*}
		\tilde{w}=&\,w \circ \Psi+\frac{1}{2}-\frac{1}{2} \frac{|y'|^2+(y_4-1)^2}{|y'|^2+(y_4+1)^2}+\log \Big(\frac{2}{(y')^2+(1+y_4)^2}\Big), \\
		w=&\,\tilde{w} \circ \Psi^{-1} -\frac{1-|X|^2}{2}+\log \Big(\frac{2}{|x'|^2+(1+x_4)^2}\Big).
	\end{align*}
	By the isometry, we may regard  \eqref{maineq} as  an equation on $\R_{+}^4$ for the pullback metric $\Psi ^* g$. Taking $|\d Y|^2$ as the background metric and applying the conformal transformation laws for $P_g^4$, $P_g^{3,b}$ and $H_g$ in \eqref{conformal-3}, we rewrite system  \eqref{maineq} as 
	\[
	\left\{\begin{aligned}&\Delta^2 \tilde{w}=0 && \text { in }\, \R_{+}^4, \\& \pa_{y_4} \Delta \tilde{w}=2f e^{3 \tilde{w}} && \text { on }\, \R^3, \\ &\pa_{y_4} \tilde{w}=0 && \text { on }\, \R^3.	\end{aligned}\right.
	\]
	Moreover, the isometry also implies the finite-volume conditions
	\[
	\int_{\R_{+}^4} e^{4 \tilde{w}(y', y_4)} \d y' \d y_4=\operatorname{vol}(\B^4, g)<+\infty, \quad \int_{\R^3} e^{3 \tilde{w}(y', 0)} \,\d y'=\vol (\S^3,\hat g)<+\infty.
	\]

	\begin{proof}[Proof of Lemma \ref{lem:3.6-2}]
	The proof of the compactness alternative (i) is identical to the one in Lemma \ref{lem:3.6-1}, so we omit it. Assume therefore that concentration occurs in the sense of Lemma \ref{lem:3.5}, we proceed to show that this necessarily yields the behavior described in (ii).

		By Lemma \ref{lem:3.5},  we can choose $p_k \in \S^3$ and $r_k>0$ such that
		\be\label{49}
		\sup _{p \in \S^3} \int_{B_{r_k}(p)}|T_{g_k}| \, \d s_{\hat g_k} =\int_{B_{r_k}(p_k)}|T_{g_k}| \, \d s_{\hat g_k} = \pi^2.
		\ee
		In view of \eqref{47},  we have $r_k \to 0$ as $k \to +\infty$.  Up to a subsequence, we may also assume  $p_k \to p\in \S^3$.

		Let $\tilde{\Phi}_k: \B^4 \to  \B^4$ be conformal diffeomorphisms whose boundary restrictions
		$\tilde\Phi_k|_{\S^3}$ mapping the upper hemisphere $\S_{+}^3$ into $B_{r_k}(p_k)$ and taking the equatorial sphere $\pa \S_{+}^3$ to $\pa B_{r_k}(p_k)$. Consider the sequence of functions $\tilde{w}_k: \B^4 \to  \R$ defined by
	\[
		\tilde{w}_k=w_k \circ \tilde{\Phi}_k+\frac{1}{4} \log (\operatorname{det}(\d \tilde{\Phi}_k)).
	\]
Then $\tilde w_k$ solves
		\[
		\left\{\begin{aligned}
			&P^4_{g_{\B^4}}\tilde{w}_k=0&&\mbox{ in }\,\B ^4,\\
			&P^{3,b}_{g_{\B^4}}\tilde{w}_k+T_{g_{\B^4}}=\widetilde{T}_{g_k}e^{3\tilde{w}_k}&&\mbox{ on }\,\S ^3,\\&\frac{\pa \tilde{w}_k}{\pa\nu_{g_{\B^4}}}=0&&\mbox{ on }\,
			\S ^3,
		\end{aligned}\right.
		\]
		where $\widetilde{T}_{g_k}:=T_{g_k} \circ \tilde{\Phi}_k$. 
		By  Lemma \ref{lem:3.5} and the choice of $p_k, r_k$, together with \eqref{49}, we obtain  $\tilde{w}_k \to w_{\infty}$ in $H_{loc}^4(\B^4 \backslash\{\cS\})$ and  $\widetilde{T}_{g_k} \to  T_{\infty}(p)$ almost everywhere as $k \to  +\infty$. 
		
Let $\Psi:\R^4_+\to\B^4$ be the M\"obius transform \eqref{Mobius-map} and define  $W_k$ on $\R^4_+$ by
	\be\label{2.31}
		W_k=\tilde{w}_k \circ \Psi+\frac{1}{2}-\frac{1}{2} \frac{|y'|^2+(y_4-1)^2}{|y'|^2+(y_4+1)^2}+\log \Big(\frac{2}{(y')^2+(1+y_4)^2}\Big).
		\ee 
 Then $W_k\to W_\infty$ in $H^4_{loc}(\R^4_+)$, and $W_\infty$ satisfies
		\be\label{51}
		\left\{\begin{aligned}
			&\Delta^2W_{\infty}=0 && \text { in }\, \R_{+}^4, \\ &\pa_{y_4} \Delta W_{\infty}= 2 T_{\infty}(p)e^{3 W_{\infty}} && \text { on }\, \R^3, \\ &\pa_{y_4} W_{\infty}=0 && \text { on }\, \R^3.
		\end{aligned}\right.
		\ee
		By \eqref{20} and Fatou's lemma,
		\be\label{52-1}
		\int_{\R ^4_+} e^{4 W_{\infty}(y',y_4)} \, \d y'\, \d y_4 \leq \liminf _{k \to  +\infty} \int_{\R ^4_+} e^{4 W_k(y',y_4)} \, \d y'\, \d y_4=\vol(\B^4,g)
		\ee
		and 
		\be\label{52}
		\int_{\R ^3} e^{3 W_{\infty}(y',0)}  \, \d y' \leq \liminf _{k \to  +\infty} \int_{\R ^3} e^{3 W_k(y',0)}\, \d y'=\vol(\S^3,\hat g)=2 \pi^2.
		\ee

 Since $w_k$ is smooth on $\B^4$, then \eqref{2.31} leads to $W_{\infty}(y',0)=o(|y'|^2)$. Therefore the classification theorem \cite[Theorem 1.2]{NS2024} applies to \eqref{51}-\eqref{52}, yielding
		\[
		W_{\infty}(y',y_4)=\log \Big(\frac{2 \lam}{(\lam+y_4)^2+|y'-a|^2}\Big)+\frac{2y_4\lam}{(\lam+y_4)^2+|y'-a|^2}+cy_4^2+\frac{1}{3}\log \frac{2}{T_{\infty}(p)}
		\]
		for some $\lam>0$, $a \in \R ^3$ and $c\leq 0$.  From this one easily verifies that
		\[
		\int_{\R ^3} T_{\infty}(p) e^{3 W_{\infty}(y',0)} \,\d y'=4 \pi^2,
		\]
		which also implies that $T_{\infty}(p)=2$ in view of  \eqref{26}. The remaining part of the proof is the same as that of Lemma \ref{lem:3.6-1}, and we omit the details for simplicity.
	\end{proof}

	From Lemmas \ref{lem:3.4}, \ref{lem:3.6-1} and \ref{lem:3.6-2}, we obtain a precise description of the asymptotic behavior of the flow $w(t)$  in the divergent case. 
	For $t \geq 0$, let
	\[
	S:=S(t)=\dashint_{\S ^3} x \,\d s_{\hat g}=\dashint_{\S^3} \hat \Phi \,\d s_{\hat h}
	\] 
	denote the center of mass of the boundary metric $\hat g(t)$. When $h(t)$ is sufficiently close to $g_{\B^4}$, this point is well approximated by
	\[
	p:=p(t)=\dashint_{\S^3} \hat\Phi \,\d s_{g_{\S^3}}.
	\]
	For notational convenience, we extend the function $f$ to the region  $\{p\in\B^4: |p| \geq \frac{1}{2}\}$ by setting  $f(p):=f(\frac{p}{|p|})$.

	\begin{prop}\label{prop:3.7}
		Suppose  that problem   \eqref{maineq} admits no solution in the conformal class of $g_{\B ^4}$. Let $u(t)$ be a solution to \eqref{flow}--\eqref{20}, and let $v(t)$ be the corresponding normalized flow introduced in Section \ref{sec:3.1}. 
		Then, as $t \to  +\infty$,  one has
		\be\label{prop:3.7-1}
		\left\{\begin{aligned}
			&v(t)\to 0 &&\mbox{ in }\,H^4(\B ^4,g_{\B ^4}), \\ &h(t)\to g_{\B ^4} && \mbox{ in }\,H^4(\B ^4,g_{\B ^4}), \\& T_{h(t)}\to 2 && \mbox{ in }\,L^2(\S ^3,g_{\S ^3}),\\&\hat \Phi(t) \to p(t) && \mbox{ in }\,L^2(\S ^3,g_{\S ^3}).
		\end{aligned}\right.
		\ee 
		Moreover, as $t \to  +\infty$,  we have \[ \|f\circ \hat \Phi(t)- f(p(t))\|_{L^2(\S ^3,\hat h)}\to 0 \quad \text{ and } \quad  \al  f(p(t))\to 2. \] 
	\end{prop}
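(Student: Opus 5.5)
We argue by contradiction with the compactness alternative, using the uniform control supplied by Lemma \ref{lem:3.4} and the dichotomy of Lemma \ref{lem:3.6-2}.

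Fix an arbitrary sequence $t_k\to+\infty$ and write $w_k=w(t_k)$, $g_k=g(t_k)$. By \eqref{42} we may pass to a subsequence with $\al(t_k)\to\al_\infty\in[2/M_f,2/m_f]$. Lemma \ref{lem:3.4} then gives $\|T_{g_k}-\al_\infty f\|_{L^2(\S^3,\hat g_k)}\to0$, using $\vol(\S^3,\hat g_k)=2\pi^2$ to absorb the error $|\al(t_k)-\al_\infty|$. The constraints $Q_{g_k}=0$ and $H_{g_k}=0$ hold by \eqref{QHt=0}, so Lemma \ref{lem:3.6-2} applies with $T_\infty=\al_\infty f$, which is smooth and positive.

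\emph{Ruling out compactness.} Alternative (i) would give a limit $g_\infty$ with $T_{g_\infty}=\al_\infty f$. After adding $\frac13\log\al_\infty$ to the conformal factor, this solves \eqref{maineq}, contradicting the hypothesis. Hence (ii) holds: $v(t_k)\to0$ in $H^4$, $T_{h(t_k)}\to2$ in $L^2$, and $\hat\Phi(t_k)\rightharpoonup q$ weakly in $H^{3/2}$ for some $q\in\S^3$. Since the limit $0$ does not depend on the sequence, the subsequence principle upgrades this to $v(t)\to0$ and $T_{h(t)}\to2$ along the full flow. Because $h=e^{2v}g_*$ with $g_*$ fixed, the convergence of $h$ follows from that of $v$. (If the statement means convergence to $g_*$, this is just the normalization $v=0$; I will read it that way.)

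\emph{Convergence $\hat\Phi(t)\to p(t)$ in $L^2$.} For each sequence, the weak $H^{3/2}$ convergence gives strong $L^2$ convergence $\hat\Phi(t_k)\to q$ by compact embedding. Consequently $p(t_k)=\dashint\hat\Phi(t_k)\,\d s_{g_{\S^3}}\to q$, and so $\|\hat\Phi(t_k)-p(t_k)\|_{L^2}\to0$. The subsequence argument again yields the full limit. This also shows $|p(t)|\to1$, so $f(p(t))$ is eventually defined through the radial extension.

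\emph{The $f$-statements.} Since $\hat h\to g_{\S^3}$ uniformly (from $H^4$ convergence of $v$ together with the trace embedding into $L^\infty$), and $f$ is Lipschitz, we get
\[
\|f\circ\hat\Phi-f(p)\|_{L^2(\hat h)}\le C\|\hat\Phi-p/|p|\|_{L^2}\to0 .
\]
Finally, write
\[
\al f(p)-2=(\al f\circ\hat\Phi-T_h)+(T_h-2)+\al\,(f(p)-f\circ\hat\Phi).
\]
The first term has $L^2(\hat h)$-norm $F_2(t)^{1/2}\to0$ by \eqref{62} and Lemma \ref{lem:3.4}, the second tends to zero by what was just shown, and the third is controlled by \eqref{42} and the previous estimate. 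Since the left-hand side is constant on $\S^3$, dividing by the volume gives $\al f(p(t))\to2$.

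The main obstacle is the first step. One must check that the hypotheses of Lemma \ref{lem:3.6-2} hold along an arbitrary time sequence, including the $L^2$ convergence to a fixed limit $T_\infty$ despite $\al$ varying, and one must pass from subsequential limits to limits along the full flow. This is delicate only for $\hat\Phi$, whose concentration point $q$ may depend on the sequence. That is why the statement compares $\hat\Phi(t)$ with $p(t)$ rather than with a fixed point, and the argument above handles it that way.
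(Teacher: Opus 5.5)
Your proof is correct and follows essentially the paper's argument: pass to a subsequence with $\al(t_k)$ convergent, feed Lemma \ref{lem:3.4} into the dichotomy of Lemma \ref{lem:3.6-2}, rule out compactness because $\al_\infty^{2/3}g_\infty$ would solve \eqref{maineq}, and upgrade to the full limit. The paper phrases the same step as a contradiction along a bad sequence $t_k$. Your reading $h(t)\to g_*$ is the right one, since $h=e^{2v}g_*$ and $H_h=0$. For $\al f(p)\to 2$, the paper takes the shorter route $2-\al f(p)=\al\dashint_{\S^3}(f_{\hat\Phi}-f(p))\,\d s_{\hat h}$, which follows from \eqref{19} and \eqref{26} and does not need $T_h\to 2$ or $F_2\to 0$.
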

	
	\begin{proof}
	We first show that 	$h(t)\to g_{\B ^4}$  in $H^4(\B ^4,g_{\B ^4})$ and $\hat \Phi(t) \to p(t)$ in $L^2(\S ^3,g_{\S ^3})$. Assume by contradiction that this is false. Then there exists a sequence $t_k\to +\infty$  such that 
		\[
		\liminf _{k \to \infty}\, (\|h(t_k)-g_{\B^4}\|_{H^4(\B ^4,g_{\B ^4})}+\|\hat \Phi(t_k)-p(t_k)\|_{L^2(\S^3,g_{\S^3})})>0.
		\]
		By Lemmas \ref{lem:3.4}, \ref{lem:3.6-1} and \ref{lem:3.6-2}, after passing to a subsequence (still denoted by $\{t_k\}$), we may assume that $u(t_k) \to  u_{\infty}$, $g_k=e^{2 u(t_k)} g_{\B^4} \to  g_{\infty}:=e^{2 u_{\infty}} g_{\B^4}$ in $H^4(\B ^4,g_{\B ^4})$, and that $\al(t_k) \to  \al$ as $k\to+\infty$. 
		Lemmas \ref{lem:3.4}, \ref{lem:3.6-1} and \ref{lem:3.6-2} then imply that the metric $\alpha^{\frac23}g_\infty$ has vanishing $Q$-curvature and mean curvature, and its $T$-curvature is equal to $f$ on $\S^3$, which contradicts the assumption that problem \eqref{maineq} has no solution in $[g_{\B^4}]$.  This proves the claim.

		The remaining limits in \eqref{prop:3.7-1} follow immediately.  Moreover, since  $f$ is smooth on $\S^3$ and  $\hat \Phi(t) \to p(t)$ in $L^2(\S ^3,g_{\S ^3})$, we obtain $f\circ \hat \Phi(t) \to f(p(t))$ in $L^2(\S ^3,g_{\hat h})$. Finally, 		by \eqref{19}, \eqref{26} and H\"older inequality, 
		\[
		2-\al  f(p)=\al \dashint_{\S ^3}(f_{\hat \Phi}-f(p)) \, \d s _{\hat h} \to 0\quad  \mbox{ as }\, t\to+\infty ,
		\]
		which completes the proof.
	\end{proof}

	\section{Spectral decomposition}\label{sec:5}
	From now on, we assume that $f$ cannot be realized as the $T$-curvature of any
	conformal metric in the standard conformal class of $g_{\B^4}$. Hence, the $T$-curvature
	flow \eqref{flow3} with  initial data $w_0$ does not converge, and  Proposition \ref{prop:3.7} applies throughout without further mention.

	We consider the following eigenvalue problem on $(\B^4,g)$: find an eigenpair $(\Lam,\phi)\in\R\times \mathcal H_g$  with $\phi\not\equiv 0$ such that
	\begin{equation}\label{eigenvalue_problem}
		\left\{
		\begin{aligned}
			&P_g^4\phi=0 &&\mbox{ in }\B ^4,\\
			&P_g^{3,b}\phi=\Lam\phi && \mbox{ on }\,\S ^3,\\
			&\frac{\pa\phi}{\pa\nu_g}=0 && \mbox{ on }\,\S ^3.
		\end{aligned}\right.
	\end{equation}
	It is well known that problem \eqref{eigenvalue_problem} admits a complete sequence of eigenfunctions
	$\{\varphi_i^g\}_{i\in\N_0}$ whose boundary traces on $\S^3$, denoted by $\hat\varphi_i^g:=\varphi_i^g|_{\S^3}$, form an orthonormal basis of $L^2(\S^3,\hat g)$. 
	In particular, they are normalized so that $\int_{\S^3}\hat\varphi_i^g\,\hat\varphi_j^g\, \d s_{\hat g}=\delta_{ij}$,
	$i,j\in\N_0$,
	and hence $\|\hat\varphi_i^g\|_{L^2(\S^3,\hat g)}=1$ for all $i\in\N_0$.
	Moreover, the associated eigenvalues satisfy
	\be\label{nonegativeeigenvalues}
	\Lam_0^g=0<\Lam_1^g\le \Lam_2^g\le \ldots,
	\ee
	listed in nondecreasing order and repeated according to multiplicity. Similarly, we denote by $\{\varphi_i^h\}_{i\in\N_0}$ and $\{\varphi_i\}_{i\in\N_0}$ the eigenfunctions associated with the metrics $h$ and $g_{\B^4}$, respectively, with corresponding eigenvalues $\Lambda_i^h$ and $\Lam_i:=\Lam_i^{g_{\B^4}}$.

	The eigenfunctions can be chosen in such a way that
	$\varphi_i^h=\varphi_i^g\circ\Phi$. Moreover, we deduce from Proposition \ref{prop:3.7} that 
	and $\varphi_i^h\to\varphi_i$ smoothly as $t\to+\infty $ for all $i\in\N _0$. 
	In particular, 
	\be\label{65}
	\Lam_i^h=\Lam_i^g\to \Lam_i\quad \mbox{ for all }\,i\in\N _0.
	\ee

	We also consider the Steklov eigenvalue problem on $(\B^4,g_{\B^4})$: find  an eigenpair $(\lam,\psi)\in\R\times H^1(\B^4,g_{\B^4})$ with $\psi\not\equiv 0$ such that
	\be\label{Steklov}
	\left\{\begin{aligned}
		&\Delta_{g_{\B ^4}} \psi=0&&\mbox{ in }\B ^4,\\
		&\frac{\pa  \psi}{\pa \nu_{g_{\B ^4}}}=\lam \psi&& \mbox{ on }\,\S ^3.	
	\end{aligned}\right.
	\ee 
	By elliptic regularity, any Steklov eigenfunction is smooth up to the boundary.
	
	It is well-known that the Steklov eigenspaces on $(\B ^4,g_{\B ^4})$ are given by the boundary restrictions to $\S^3$  of homogeneous harmonic polynomials on $\R^4$.  More precisely, for each $k\in \N_0$, let  $E(k)$ denote the space of homogeneous harmonic polynomials of degree $k$ in $\R^4$. Then the associated Steklov eigenvalue is $\lam=k$, with multiplicity $(k+1)^2$; see, for instance, \cite[Example 1.3.2]{GP2017}.  Accordingly, the Steklov spectrum of \eqref{Steklov} can be arranged as
	\be\label{lambda}
	0=\lam _0<\lam _1=\ldots=\lam _4=1<\lam _5=2\leq\ldots,
	\ee  
	with a corresponding  basis of eigenfunctions  $\{\psi_i\}_{i\in\N _0}$  whose boundary traces on $\S^3$, denoted by $\{\hat \psi_i\}_{i\in\N _0}$, form an orthonormal basis of $L^2(\S^3,g_{\S^3})$.
	On the other hand, the eigenvalues of $-\Delta_{g_{\S^3}}$
	are given by  $k(k+2)$ for $k\in\N _0$, and we denote by $\hat E(k)$ the corresponding eigenspace. It is easy to see that $\hat E(k)=E(k)|_{\S ^3}$.
	Therefore, if $\lam_i=k$, then $\hat\psi_i:=\psi_i|_{\S^3}\in \hat E(k)$, and we obtain
	\be \label{egenfunction-sphere}
	-\Delta_{g_{\S ^3}}\hat \psi_i=\lam _i(\lam _i+2)\hat \psi_i\quad \mbox{ on }\,\S ^3.
	\ee

	\begin{prop}
		The eigenvalues of problem \eqref{eigenvalue_problem} on $(\B^4,g_{\B^4})$
		are given by $\Lam_k = k(k+1)(k+2)$, $ k\in\N_0$,
		each with multiplicity $(k+1)^2$. Moreover, the corresponding eigenspace associated with $\Lam_k$
		is spanned by the harmonic extensions of Steklov eigenfunctions
		with Steklov eigenvalue $k$.
		In particular, when listed in nondecreasing order and repeated according to multiplicity, the spectrum begins as
		\be\label{eigenvalue}
		0=\Lam_0<6=\Lam_1=\Lam_2=\Lam_3=\Lam_4<24=\Lam_5\le\ldots.
		\ee 
	\end{prop}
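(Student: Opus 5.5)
The plan is to reduce \eqref{eigenvalue_problem} (with $g=g_{\B^4}$) to a computation on spherical harmonics. I will write down explicit eigenfunctions mode by mode and then use completeness of spherical harmonics in $L^2(\S^3,g_{\S^3})$ to show there are no others. I will use the explicit form of $P^4_{g_{\B^4}}$ and $P^{3,b}_{g_{\B^4}}$ in \eqref{P^4&P^3}, together with the fact recalled before the statement that, for each boundary datum $\tilde v$, there is a unique biharmonic $\tilde w$ on $\B^4$ with $\frac{\pa \tilde w}{\pa\nu}=0$ and trace $\tilde v$. By this uniqueness, an eigenfunction is determined by its trace $\hat\phi$, so it suffices to decompose $\hat\phi$ into spherical harmonics.

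\textbf{Step 1 (extension of a single mode).} Fix $k\in\N_0$ and $Y\in\hat E(k)$, so $Y$ is the restriction of a homogeneous harmonic polynomial of degree $k$ (a Steklov eigenfunction with eigenvalue $k$ by \eqref{lambda}). In polar coordinates $x=r\theta$, I take the ansatz $\phi=(a r^k+b r^{k+2})Y(\theta)$. The function $r^kY$ is harmonic. A direct computation gives $\Delta(r^{k+2}Y)=\big((k+2)(k+4)-k(k+2)\big)r^kY=(4k+8)r^kY$, which is harmonic, so $\phi$ is biharmonic. The Neumann condition $\pa_r\phi|_{r=1}=0$ reads $ak+b(k+2)=0$. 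Normalizing the trace so that $a+b=1$ gives $a=\frac{k+2}{2}$ and $b=-\frac k2$. By the uniqueness above, this $\phi$ is the biharmonic Neumann extension of $Y$. Since $\phi|_{\S^3}=Y$, it is exactly the function attached to the Steklov eigenfunction $r^kY$.

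\textbf{Step 2 (evaluate the boundary operator).} I evaluate each term of \eqref{P^4&P^3} at $r=1$. Since $\Delta\phi=b(4k+8)r^kY$, we get $-\frac12\pa_r\Delta\phi=-2bk(k+2)Y$. Next, $\pa_r\phi=0$ kills the term $-\Delta_{g_{\S^3}}\pa_\nu\phi$. Finally, $-\Delta_{g_{\S^3}}\phi=k(k+2)(a+b)Y$ by \eqref{egenfunction-sphere}. Summing,
\[
P^{3,b}_{g_{\B^4}}\phi=k(k+2)(a-b)Y=k(k+1)(k+2)\,Y .
\]
As a consistency check, this agrees with \eqref{Beckner-3-equal}: $\cP^3_{g_{\S^3}}Y=(\lam_k+1)^{1/2}\lam_kY$ with $\lam_k+1=(k+1)^2$.

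\textbf{Step 3 (completeness and multiplicity).} Let $(\Lam,\phi)$ be any eigenpair. I expand $\hat\phi=\sum_k\hat\phi_k$ with $\hat\phi_k\in\hat E(k)$. Because the extension map is linear and unique, Step 2 gives $P^{3,b}\phi=\sum_k k(k+1)(k+2)\hat\phi_k$, a convergent sum for smooth $\phi$. Comparing with $\Lam\hat\phi$ coefficientwise forces $\hat\phi_k=0$ unless $\Lam=k(k+1)(k+2)$. The map $k\mapsto k(k+1)(k+2)$ is strictly increasing, so each eigenspace is exactly the set of extensions of $\hat E(k)=E(k)|_{\S^3}$. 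Its dimension is $(k+1)^2$, and \eqref{eigenvalue} follows with $\Lam_1=6$ and $\Lam_5=24$.

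The main obstacle is keeping track of conventions in Step 2, namely the outward normal $\pa_\nu=\pa_r$ and the signs in \eqref{P^4&P^3}. The cross-check against the Beckner symbol guards against sign errors there. The phrase ``spanned by harmonic extensions of Steklov eigenfunctions'' should be read through traces: the eigenfunction is the biharmonic Neumann extension of the trace of the harmonic polynomial $r^kY$.
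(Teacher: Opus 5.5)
Your proof is correct, but it takes a different route from the paper's.

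The paper expands $\hat\phi$ in the traces $\hat\psi_i$ of Steklov eigenfunctions. It then invokes Lemma~6.2 of \cite{AC2017} to replace $P^{3,b}_{g_{\B^4}}\phi$ by $\cP^3_{g_{\S^3}}\hat\phi$, and applies the same lemma to the \emph{harmonic} functions $\psi_i$. Using $\Delta\psi_i=0$ and $\pa_\nu\psi_i=\lam_i\psi_i$, it obtains $P^{3,b}_{g_{\B^4}}\psi_i=-(\lam_i+1)\Delta_{g_{\S^3}}\hat\psi_i=\lam_i(\lam_i+1)(\lam_i+2)\hat\psi_i$.

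You instead write down the genuine biharmonic Neumann extension $\big(\frac{k+2}{2}r^k-\frac{k}{2}r^{k+2}\big)Y$ of each mode and evaluate \eqref{P^4&P^3} on it directly. The Ache--Chang identification then enters only as a consistency check. What your version buys:
\begin{itemize}
\item It is self-contained.
\item It produces functions that actually satisfy the Neumann condition in \eqref{eigenvalue_problem}. The harmonic $\psi_i$ do not when $k\ge 1$, so the statement's phrase about harmonic extensions must indeed be read through traces, as you note.
\item Your strict-monotonicity step makes explicit that every eigenfunction for $\Lam_k$ has trace in $\hat E(k)$, which the paper leaves implicit.
\end{itemize}
The paper's version is shorter once the black-box lemma is accepted, and it fits the Beckner-operator framework used throughout.

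Your Step 3 applies $P^{3,b}_{g_{\B^4}}$ termwise to a series and assumes $\phi$ is smooth. You can avoid both by testing against a single mode instead. Let $\phi_k$ be the extension of $Y\in\hat E(k)$; both $\phi$ and $\phi_k$ are biharmonic with zero Neumann data. The symmetry of \eqref{P43-1} then gives $\Lam\langle\hat\phi,Y\rangle=\langle P^{3,b}_{g_{\B^4}}\phi,Y\rangle=\langle\hat\phi,P^{3,b}_{g_{\B^4}}\phi_k\rangle=k(k+1)(k+2)\langle\hat\phi,Y\rangle$, with all pairings in $L^2(\S^3,g_{\S^3})$. This yields your coefficientwise comparison directly, and only the smooth test function $\phi_k$ needs to be differentiated.
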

	
	\begin{proof}
		Let $(\phi,\Lam)$ be an eigenpair of problem \eqref{eigenvalue_problem}
		on $(\B^4,g_{\B^4})$.
		We first claim that $\Lam\in \{\lam_i(\lambda_i+1)(\lambda_i+2):i\in\N_0\}$,
		where $\{\lambda_i\}_{i\in\N_0}$ is the Steklov spectrum
		\eqref{lambda}.
		
		Indeed,	assume by contradiction that
		\be\label{1}
		\Lam\notin\{\lam_i(\lam_i+1)(\lam_i+2):i\in\N_0\}.
		\ee
		Since $\{\hat\psi_i\}_{i\in\N_0}$
		is an orthonormal basis of $L^2(\S^3,g_{\S^3})$, we may expand
		\be\label{expanding}
		\hat\phi=\sum_{i=0}^\infty a_i\,\hat\psi_i,
		\ee 
		where $a_i:=\int_{\S^3} \hat\phi \hat\psi_i \, \d s_{g_{\S^3}}$. 
		Because $P^4_{g_{\B^4}}\phi=0$ in $\B^4$,      Lemma 6.2 in \cite{AC2017}  together with \eqref{expanding}  yields
		\[
		\Lam \hat\phi =P^{3,b}_{g_{\B^4}}\phi
		=\cP^3_{g_{\S^3}}\hat\phi
		=\sum_{i=0}^\infty a_i\,\cP^3_{g_{\S^3}}\hat\psi_i=\sum_{i=0}^\infty a_i P^{3,b}_{g_{\B^4}}\psi_i,
		\]
		where we also used  $\Delta_{g_{\B ^4}} \psi_i=0$  in $\B ^4$ and Lemma 6.2 in \cite{AC2017} in the last equation.
		Since $(\lam_i,\psi_i)$ is a Steklov eigenpair for problem \eqref{Steklov},  we further deduce from \eqref{P^4&P^3} and \eqref{egenfunction-sphere} that 
		\begin{align*}
			\Lam \hat \phi = \sum_{i=0}^\infty a_i P^{3,b}_{g_{\B^4}}\psi_i=&\, \sum_{i=0}^\infty a_i \Big(-\frac{1}{2}\frac{\pa  \Delta_{g_{\B ^4}} \psi_i}{\pa \nu_{g_{\B ^4}}}
			-\Delta_{g_{\S ^3}}\frac{\pa \psi_i}{\pa \nu_{g_{\B ^4}}}
			-\Delta_{g_{\S ^3}}\hat\psi_i\Big)
			\\=&\,- \sum_{i=0}^\infty a_i (\lam_i+1) \Delta_{g_{\S ^3}}\hat \psi_i\\=&\, \sum_{i=0}^\infty a_i \lam_i (\lam_i+1) (\lam_i+2)\hat \psi_i.
		\end{align*}
		Combining this with \eqref{expanding}, we obtain
		\[
		0=\sum_{i=0}^\infty a_i(\lam_i(\lam_i+1)(\lam_i+2)-\Lam)\hat\psi_i.
		\]
		By  \eqref{1} and the fact that $\{\hat\psi_i\}$ is a basis of $L^2(\S^3,g_{\S^3})$, we get
		$a_i=0$ for all $i$. Hence $\hat\phi\equiv0$, contradicting the fact that
		$\phi$ is an eigenfunction.
		This proves the claim.
		
		Finally, on $(\B^4,g_{\B^4})$ the Steklov spectrum satisfies $\lambda_i=k$
		with multiplicity $(k+1)^2$.
		For each $k\in\N_0$, let $\{\psi_i:\lambda_i=k\}$ be a basis of Steklov eigenfunctions.
		Since each $\psi_i$ is harmonic in $\B^4$ and satisfies
		$\frac{\pa\psi_i}{\pa\nu_{g_{\B^4}}}=k\psi_i$ on $\S^3$,
		it follows from \eqref{P^4&P^3} and \eqref{egenfunction-sphere} that
		each $\psi_i$ gives rise to an eigenfunction of problem \eqref{eigenvalue_problem}
		with eigenvalue $\Lam_k=k(k+1)(k+2)$.
		Hence, the eigenspace corresponding to $\Lam_k$ is spanned by
		$\{\psi_i:\lambda_i=k\}$ and has dimension $(k+1)^2$.
		Arranging the eigenvalues in nondecreasing order and counting multiplicities
		then yields \eqref{eigenvalue}, which completes the proof.
	\end{proof}

	\subsection{Upper bound on the changing rate of $F_2$}
	\begin{lem}\label{lem:3.8}
		Assume that either  $f\equiv\text{constant}$ or  problem  \eqref{maineq} admits no solution in the conformal class of $g_{\B ^4}$.
		Then, as $t\to +\infty$, we have
		\[
		\frac{\d F_2(t)}{\d t}\leq -(2+o(1))G_2(t)
		+(12+o(1)) F_2(t) ,
		\]
		where $o(1)\to 0$ as $t\to+\infty $.
	\end{lem}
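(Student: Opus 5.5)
We start from the evolution identity \eqref{39} of Lemma \ref{lem:3.4}, written in the normalized coordinates:
\[
\frac{\d F_2}{\d t}+\frac{8\pi^2}{3}\Big(\frac{\al_t}{\al}\Big)^2=-2G_2+3\int_{\S^3}(T_h-\al f_{\hat\Phi})^3\,\d s_{\hat h}+6\al\int_{\S^3}f_{\hat\Phi}(T_h-\al f_{\hat\Phi})^2\,\d s_{\hat h}.
\]
We drop the nonnegative term $\frac{8\pi^2}{3}(\al_t/\al)^2$ and bound the two remaining terms on the right separately.

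\textbf{Quadratic term.} Under the hypothesis that \eqref{maineq} has no solution, Proposition \ref{prop:3.7} gives $\|f_{\hat\Phi}-f(p(t))\|_{L^2(\S^3,\hat h)}\to0$ and $\al f(p(t))\to2$. We need more than $L^2$ smallness, since we multiply by $(T_h-\al f_{\hat\Phi})^2$. We therefore split
\[
6\al f_{\hat\Phi}=6\al f(p)+6\al(f_{\hat\Phi}-f(p)).
\]
The first piece equals $(12+o(1))$ times its weight, so it contributes $(12+o(1))F_2$. For the error piece we use H\"older:
\[
\Big|\int_{\S^3}\al(f_{\hat\Phi}-f(p))(T_h-\al f_{\hat\Phi})^2\,\d s_{\hat h}\Big|\le C\|f_{\hat\Phi}-f(p)\|_{L^2(\hat h)}\|T_h-\al f_{\hat\Phi}\|^2_{L^4(\hat h)}.
\]
Since $h\to g_{\B^4}$ in $H^4$, we have $\hat v\to0$ uniformly and $\hat h$ is uniformly equivalent to $g_{\S^3}$. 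The Sobolev embedding $H^{3/2}(\S^3)\hookrightarrow L^4$, together with $\cP^3_{\hat h}=e^{-3\hat v}\cP^3_{g_{\S^3}}$, then gives
\[
\|\cdot\|^2_{L^4(\hat h)}\le C(G_2+F_2).
\]
Hence the error piece is $o(1)(G_2+F_2)$. In the case $f\equiv$ const, \eqref{19} and \eqref{26} give $\al f\equiv2$ exactly, so this term is $12F_2$ with no error.

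\textbf{Cubic term.} We use \eqref{44}, now valid for all large $t$ because the uniform equivalence of metrics holds along the whole flow:
\[
\int_{\S^3}|T_h-\al f_{\hat\Phi}|^3\,\d s_{\hat h}\le C_0F_2^{1/2}(G_2+F_2).
\]
By Lemma \ref{lem:3.4}, $F_2\to0$, so this is $o(1)(G_2+F_2)$. In the constant case Lemma \ref{lem:3.4} applies equally, since it did not use nonexistence.

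\textbf{Conclusion and main obstacle.} Collecting the estimates,
\[
\frac{\d F_2}{\d t}\le-(2-o(1))G_2+(12+o(1))F_2,
\]
which is the claim. The main obstacle is justifying the uniform-in-time control $C^{-1}g_{\S^3}\le\hat h\le Cg_{\S^3}$ and the Sobolev constant relating $G_2$ to the $H^{3/2}(g_{\S^3})$ norm. In the nonexistence case this comes from $v\to0$ in $H^4$ (Proposition \ref{prop:3.7}). In the case $f\equiv$ const we would instead use the $L^\infty$ bound on $\hat v$ from the proof of Lemma \ref{lem:3.4}, which holds for all $t\ge t_0$ there, possibly combined with Lemma \ref{lem:3.6-2}, which gives concentration or convergence with $v\to v_\infty$. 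In either case the equivalence constant is bounded and $G_2$ controls the homogeneous $H^{3/2}$ seminorm up to that constant. Only the exact coefficient of $F_2$ needs $\al f_{\hat\Phi}\to2$, and only the coefficient of $G_2$ needs $F_2\to0$.
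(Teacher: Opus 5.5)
Your proposal is correct and follows essentially the same route as the paper. You start from \eqref{39} and absorb the cubic term via \eqref{44} together with $F_2\to0$ (Lemma \ref{lem:3.4}). You split $f_{\hat\Phi}=f(p)+(f_{\hat\Phi}-f(p))$ and control the remainder by H\"older, $H^{3/2}\hookrightarrow L^4$ and Proposition \ref{prop:3.7}. In the constant case you use $\al f\equiv 2$ exactly. Your observation that the uniform metric equivalence \eqref{43} persists for all $t\ge t_0$, by the bootstrap in Lemma \ref{lem:3.4}, makes explicit a point the paper leaves implicit: it is what keeps \eqref{44} available when Proposition \ref{prop:3.7} does not apply, e.g.\ for $f\equiv$ const.
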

	\begin{proof}
		From \eqref{39}, \eqref{44},  and Lemma \ref{lem:3.4}, one has 
		\begin{align*}
			\frac{\d F_2(t)}{\d t}\leq &\,-2 G_2(t)+6\al \int_{\S ^3}f _{\hat \Phi}(T_h-\al  f_{\hat \Phi})^2\, \d s _{\hat h}+o(1)(G_2(t)+F_2(t))
			\\=&\,-(2+o(1))G_2(t)
			+(6\al  f(p)+o(1)) F_2(t)\\&\,+6\al \int_{\S ^3}(f _{\hat \Phi}-f(p))(T_h-\al  f_{\hat \Phi})^2\, \d s _{\hat h}.
		\end{align*}
		If $f\equiv\text{constant}$, then by \eqref{19} we have  $\al f(p)=2$,  and last term vanishes, yielding the desired conclusion. Otherwise, 
		applying H\"older inequality,  Sobolev's embedding $H^{\frac{3}{2}}(\S ^3,g_{\S ^3})
		\hookrightarrow L^4(\S ^3,g_{\S ^3})$
		and Proposition  \ref{prop:3.7}, we deduce
		\begin{align*}
			&\Big|\al \int_{\S ^3}(f_{\hat \Phi}-f(p))(\al  f_{\hat \Phi}-T_h)^2\, \d s _{\hat h} \Big|\\
			=&\, O(\|f_{\hat \Phi}-f(p)\|_{L^2(\S ^3,g_{\S ^3})}
			\|\al  f_{\hat \Phi}-T_h\|^2_{L^4(\S ^3,g_{\S ^3})})\\ =&\, o(1)\|\al  f_{\hat \Phi}-T_h\|^2_{H^{\frac{3}{2}}(\S ^3,g_{\S ^3})}\\
			= &\,o(1)(F_2(t)+G_2(t)).
		\end{align*}
		This, together with Proposition \ref{prop:3.7},  yields the desired conclusion and completes the proof.
	\end{proof}

	\subsection{Dominance of $B$ and the estimate of $F_2$}

	Expanding $\al  f-T_g$ and 
	$\al  f_{\hat \Phi}-T_h$   in the orthonormal eigenbases
	$\{\hat\varphi_i^g\}_{i\in\N_0}$ and $\{\hat\varphi_i^h\}_{i\in\N_0}$ of $L^2(\S^3,\hat g)$ and $L^2(\S^3,\hat h)$, respectively, we write
	\be \label{4.10}
	\al  f-T_g=\sum_{i=0}^\infty \beta^i\hat\varphi_i^g,\quad 
	\al  f_{\hat \Phi}-T_h=\sum_{i=0}^\infty \gamma^i \hat\varphi_i^h
	\ee 
	where
	\[\beta^i:=\int_{\S ^3}(\al  f-T_g)\hat\varphi_i^g \, \d s _{\hat g} 
	\quad \text{ and }\quad \gamma^i:=\int_{\S ^3}(\al  f_{\hat \Phi}-T_h)\hat\varphi_i^h \, \d s _{\hat h}.\]
	By Parseval's identity, it follows that 
	\be\label{F2(t)}
	F_{2}(t)=\sum_{i=0}^{\infty}|\beta^i|^2.
	\ee 
	Moreover, since the eigenfunctions can be chosen so that $\varphi_i^h=\varphi_i^g\circ\Phi$, we have $\beta^i=\gamma^i$ for all $i\in\N_0$. 
	In addition, condition \eqref{19} implies that
	\be \label{beta^0}
	\beta^0=0.
	\ee 
	Consequently, invoking  \eqref{63}, \eqref{4.10}, \eqref{beta^0}, and  Lemma 6.2 in \cite{AC2017}, we obtain 
	\be \label{G2(t)}
	G_2(t)=\sum_{i=1}^\infty\Lam_i^g|\beta^i|^2.
	\ee

	Let $x=(x_1,x_2,x_3,x_4)$ denote the coordinate functions in $\R ^4$ restricted to $\S^3$. We may choose
	\be\label{normal0}
	\hat\varphi_i= \frac{\sqrt{2}}{\pi}x_i,\quad i=1,2,3,4,
	\ee 
	so that $\int_{\S^3}\hat \varphi_i^2\, \d s_{g_{\S^3}}=1$.
	Define 
	\be \label{67}
	b^i:=\int_{\S ^3} x_i (\al  f_{\hat \Phi}-T_h) \, \d s _{\hat h} \quad \mbox{ for }\,i=1,2,3,4,
	\ee 
	and set $b:=(b^1,b^2,b^3,b^4)$.
	By Proposition \ref{prop:3.7}, we have 
	\be\label{betab}
	\beta^i=\frac{\sqrt{2}}{\pi} b^i+o(1)\|\al  f-T_g\|_{L^2(\S ^3,\hat g)}\quad \mbox{ for }\,i=1,2,3,4,
	\ee
	where  $o(1)\to 0$ as $t\to+\infty$. In addition, we introduce
	\be\label{def:B}
	B:=(B^1,B^2,B^3,B^4)= \frac{\sqrt{2}}{\pi}b.
	\ee

	For the  case $f= T_{g_{\B ^4}}=2$,
	we  estimate $b^i$ using a Kazdan-Warner type identity (see, e.g., \cite[p.205]{CY1995})
	\be \label{4.9}
	\int_{\S ^3}\la \nabla_{g_{\S ^3}}T_h,\nabla_{g_{\S ^3}}x_i  \ra _{g_{\S ^3}} e^{3\hat v}\, \d s _{g_{\S ^3}}=0
	\quad \mbox{ for }\,i=1,2,3,4.
	\ee 
	It follows from
	\eqref{20} and \eqref{19} that $\al \equiv 1$ when $f=2$.
	Combining this with \eqref{67} and the fact 
	\be\label{S3eigenvalue}
	-\Delta_{g_{\S ^3}}x_i=3x_i \quad \mbox{ on }\,\S ^3
	\quad \mbox{ for }\,i=1,2,3,4,
	\ee
	we obtain, by integration by parts,
	\begin{align*}
		b^i
		=-\frac{1}{3}\int_{\S ^3}  (T_{g_{\B ^4}}-T_h)e^{3\hat v}\Delta_{g_{\S ^3}} x_i \, \d s _{g_{\S ^3}}
		=\int_{\S ^3}  (T_{g_{\B ^4}}-T_h)\la \nabla_{g_{\S ^3}}\hat v,\nabla_{g_{\S ^3}}x_i  \ra _{g_{\S ^3}}\, \d s _{\hat h}.
	\end{align*}
	Consequently, using \eqref{62} and Proposition \ref{prop:3.7}, we deduce that
	\be \label{69}
	|b^i|= O( F_2(t)^{\frac{1}{2}}\|\hat v\|_{H^1(\S ^3,g_{\S ^3})})
	=o(1)F_2(t)^{\frac{1}{2}}\quad \mbox{ for }\,i=1,2,3,4,
	\ee
	where $o(1)\to 0$ as $t\to+\infty $. It then follows from \eqref{F2(t)} and \eqref{69} that 
	\be\label{beta5}
	\sum_{i=5}^{\infty}|\beta^i|^2=(1+o(1))F_2(t).
	\ee

	\begin{proof}[Proof of Theorem \ref{thm:4.1}]
		We divide the proof into  three steps. Throughout the argument, $o(1)$ denotes a quantity that tends to $0$ as $t\to+\infty$.

		\textbf{Step 1: (Decay estimate of $F_2$).} 
		In view of  \eqref{nonegativeeigenvalues}, \eqref{65}, \eqref{eigenvalue},  and \eqref{beta5},  we obtain
		\be \label{4.17}
		G_2(t)=
		\sum_{i=1}^4\Lam_i^g|\beta^i|^2+\sum_{i=5}^\infty\Lam_i^g|\beta^i|^2
		\geq (\Lam_5+o(1))\sum_{i=5}^\infty|\beta^i|^2
		=(24+o(1))F_2(t). 
		\ee
		Combining \eqref{4.17} and Lemma \ref{lem:3.8}, we obtain
		\[
		\frac{\d}{\d t}F_2(t)\leq -\delta F_2(t)
		\]
		for some uniform constant $\delta>0$.
		Integrating this differential inequality yields
		\be \label{71}
		F_2(t)\leq Ce^{-\delta t}
		\ee 
		for all $t\geq 0$,  where  $C>0$ is a uniform constant.
		
		\textbf{Step 2: (Decay estimate of $\hat v$).} 
		From \eqref{26} we observe that 
		\be\label{normal1}
		\int_{\S ^3}(e^{3\hat v}-1)\, \d s _{g_{\S ^3}}=
		\int_{\S ^3}\, \d s _{\hat h} -\int_{\S ^3}\, \d s _{g_{\S ^3}}=0.
		\ee
		Moreover,  the normalization condition \eqref{23} implies 
		\be\label{noemal2}
		\int_{\S ^3}x\, \d s _{\hat h} =\int_{\S ^3}x(e^{3\hat v}-1)\, \d s _{g_{\S ^3}}=0.
		\ee
		Hence we may expand $e^{3\hat v}-1=\sum_{i=0}^\infty \hat V^i\hat \varphi_i$
		with respect to the basis $\{\hat \varphi_i\}_{i\in\N_0}$, where $\hat V^i:=\int_{\S^3}(e^{3\hat v}-1) \hat \varphi_i \, \d s_{g_{\S^3}} $. In particular, it follows from \eqref{normal0}, \eqref{normal1} and \eqref{noemal2} that 
		$\hat V^0=\ldots=\hat V^4=0$. 	
		
		On the other hand, let 	$\hat v= \sum_{i=0}^\infty \hat v^i\hat \varphi_i$
		be the expansion in the basis $\{\hat \varphi_i\}_{i\in\N_0}$ with coefficients $\hat v^i:=\int_{\S^3} \hat v \hat \varphi_i \, \d s_{g_{\S^3}}$.
		By Proposition  \ref{prop:3.7},
		and  Sobolev's embedding
		$H^{\frac{3}{2}}(\S ^3,g_{\S ^3})
		\hookrightarrow L^p(\S ^3,g_{\S ^3})$
		for every $ p\in [1,+\infty)$
		together with  Lemma \ref{lem:3.2}, we obtain
		\[
		3\hat v^i=3\int_{\S ^3}\hat v\hat \varphi_i \, \d s _{g_{\S ^3}}
		=\int_{\S ^3}(e^{3\hat v}-1) \hat \varphi_i\, \d s _{g_{\S ^3}}+O(\|\hat  v\|^2_{H^{\frac{3}{2}}(\S ^3,g_{\S ^3})})
		=\hat V^i+o(1)\|\hat v\|_{H^{\frac{3}{2}}(\S ^3,g_{\S ^3})}.
		\]
		In particular, we have
		\be \label{72}
		\sum_{i=0}^4|\hat v^i|^2= o(1)\|\hat v\|_{H^{\frac{3}{2}}(\S ^3,g_{\S ^3})}^2.
		\ee

		Next, rewriting the second equation in \eqref{flow5} as
		$P^{3,b}_{g_{\B ^4}}v=(T_h-T_{g_{\B ^4}}) e^{3\hat v}+T_{g_{\B ^4}}(e^{3\hat v}-1)$,
		and using Young's inequality together with the uniform boundedness of $\hat v$,
		and by using Sobolev's embedding together with Lemma \ref{lem:3.2}, we infer that for  any $\var >0$,
		\begin{align*}
			\int_{\S ^3}|P_{g_{\B ^4}}^{3,b}v|^2\, \d s _{g_{\S ^3}}
			\leq&\, C(\var )F_2(t)
			+4(1+\var )\int_{\S ^3}(e^{3\hat v}-1)^2\, \d s _{g_{\S ^3}}\\
			\leq&\, C(\var )e^{-\delta t}
			+36(1+\var )\int_{\S ^3}|\hat v|^2\, \d s _{g_{\S ^3}}+o(1)\|\hat v\|^2_{H^{\frac{3}{2}}(\S ^3,g_{\S ^3})},
		\end{align*}
		where we used \eqref{71} in the last inequality.
		Using \eqref{samev}, this implies (in terms of the Fourier coefficients $\hat v^i$)  that
		\[\sum_{i=0}^\infty\Lam_i^2|\hat v^i|^2\leq C(\var )e^{-\delta t}
		+(\Lam_1^2(1+\var )+o(1))\sum_{i=0}^\infty|\hat v^i|^2.
		\]
		Choosing $\var >0$ such that $\Lam_1^2(1+\var )<\Lam _5^2$
		and using \eqref{72}, we obtain
		\[\|\hat v(t)\|_{H^3(\S ^3,g_{\S ^3})}^2\leq C\sum_{i=0}^\infty(1+\Lam_i^2)|\hat v^i|^2
		\leq Ce^{-\delta t}+o(1)\|\hat v(t)\|_{H^3(\S ^3,g_{\S ^3})}^2.
		\]
		Therefore, for $t$ sufficiently large,
		\be \label{73}
		\|\hat v(t)\|_{H^3(\S ^3,g_{\S ^3})}^2\leq Ce^{-\delta t},
		\ee 
		as claimed.
		
		\textbf{Step 3: (Convergence of the M\"{o}bius map $\hat \Phi(t)$).} By \eqref{71}  and Lemma \ref{35}, we have
		\[\Big\|\d\hat \Phi(t)^{-1}\frac{\d\hat \Phi(t)}{\d t}\Big\|^2_{T_{id}G}= O(\|\hat \xi(t)\|^2_{L^\infty(\S ^3)})
		= O(F_2(t))= O(e^{-\delta t}).
		\]
		Here 
		$G$ denotes the (finite-dimensional) group of conformal diffeomorphisms of $\S^3$, and $\|\cdot\|_{T_{id}G}$	is any fixed norm on its Lie algebra $T_{id}G$ (equivalently, on the space of conformal Killing vector fields on $\S^3$). It follows that $\hat\Phi(t)$ converges smoothly and exponentially fast to some limit map
		$\hat\Phi_\infty$ on $\S^3$ as $t\to+\infty$. 
		Combining this with \eqref{73}, we conclude that
		\[
		\hat g(t)=(\hat\Phi(t)^{-1})^*\hat h(t)\to
		\hat g_\infty:=(\hat\Phi_\infty^{-1})^*g_{\S^3},
		\]
		and that $\hat w(t)\to\hat w_\infty$ exponentially fast in $H^3(\S^3,g_{\S^3})$,
		where $\hat g_\infty=e^{2\hat w_\infty}g_{\S^3}$.
		Consequently, $w(t)\to w_\infty$
		exponentially fast in  $H^4(\B^4,g_{\B^4})$
		as $t\to+\infty$, where $w_\infty\in C^\infty(\B^4)$, and the limiting metric
		$g_\infty=e^{2w_\infty}g_{\B^4}$ satisfies \eqref{thm:4.1-1}.
		This completes the proof.
	\end{proof}

	If the flow $\{w(t)\}$
	does not converge, then the coefficients $B(t)$ defined in \eqref{def:B} dominate the higher Fourier modes, and the flow asymptotically shadows the trajectory of an ODE on a four-dimensional subspace.
	\begin{lem}\label{lem:5.1}
		For each $i=1,2,3,4$, one has 
		\[\frac{\d B^i}{\d t}=o(1)F_2(t)^{\frac{1}{2}},\]
		where $o(1)\to 0$ as $t\to+\infty $.
	\end{lem}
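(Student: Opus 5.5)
We will differentiate the definition \eqref{67} of $b^i=\int_{\S^3}x_i(\al f_{\hat\Phi}-T_h)\,\d s_{\hat h}$ directly in time and show that every term is $o(1)F_2(t)^{1/2}$. It is convenient to first rewrite $b^i$ in the original (non-normalized) coordinates. Since $\hat h=\hat\Phi^*\hat g$ and $T_h=T_g\circ\hat\Phi$, we have
\[
b^i=\int_{\S^3}(x_i\circ\hat\Phi^{-1})(\al f-T_g)\,\d s_{\hat g}.
\]
Differentiating produces three groups of terms.

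The first group is $\int(x_i\circ\hat\Phi^{-1})\,\pa_t(\al f-T_g)\,\d s_{\hat g}$. The second is the volume-variation term $3\int(x_i\circ\hat\Phi^{-1})(\al f-T_g)\pa_t\hat w\,\d s_{\hat g}=-3\int(x_i\circ\hat\Phi^{-1})(\al f-T_g)^2\,\d s_{\hat g}$. The third is the term from the motion of $\hat\Phi$, namely $\int\pa_t(x_i\circ\hat\Phi^{-1})(\al f-T_g)\,\d s_{\hat g}$.

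The third group is controlled by Lemma \ref{35}: $\pa_t(x_i\circ\hat\Phi^{-1})$ is bounded in $L^\infty$ by $C\|\hat\xi\|_{L^\infty}\le CF_2^{1/2}$. Hence the whole term is $O(F_2)=o(1)F_2^{1/2}$, since $F_2\to0$ by Lemma \ref{lem:3.4}. The second group is also $O(F_2)$.

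For the first group we use the evolution equation from the previous subsection,
\[
\pa_t T_g=3T_g(T_g-\al f)-\cP^3_{\hat g}(T_g-\al f),
\]
together with $\pa_t(\al f)=\al_t f$. Three contributions arise.

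The $\al_t$ contribution is handled by \eqref{alpha_t}: $|\al_t|\le C|\int f(T_g-\al f)\,\d s_{\hat g}|$. Pulled back by $\Phi$, this integral equals $\int f_{\hat\Phi}(T_h-\al f_{\hat\Phi})\,\d s_{\hat h}$. Writing $f_{\hat\Phi}=f(p)+(f_{\hat\Phi}-f(p))$, the first part vanishes by $\beta^0=0$. The second part is $o(1)F_2^{1/2}$ by Proposition \ref{prop:3.7} and Cauchy--Schwarz.

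The term $3\int(x_i\circ\hat\Phi^{-1})T_g(T_g-\al f)\,\d s_{\hat g}$ is rewritten in $h$-coordinates as $-3\int x_iT_h(\al f_{\hat\Phi}-T_h)\,\d s_{\hat h}$. We write $T_h=2+(T_h-2)$; since $T_h\to2$ in $L^2$ and $\hat h\to g_{\S^3}$ in $L^\infty$, this equals $-6b^i+o(1)F_2^{1/2}$.

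The operator term is handled by self-adjointness of $\cP^3_{\hat g}$ in $L^2(\d s_{\hat g})$ and conformal naturality. It becomes $\int(\al f_{\hat\Phi}-T_h)\cP^3_{\hat h}x_i\,\d s_{\hat h}$. Since $\hat v\to0$ in $H^3$ (Proposition \ref{prop:3.7}), we have $\cP^3_{\hat h}x_i=e^{-3\hat v}\cP^3_{g_{\S^3}}x_i+\text{error}=6x_i+o(1)$ in $L^2$. Here we use $\Lam_1=6$ and that $x_i$ spans the first eigenspace. This contributes $6b^i+o(1)F_2^{1/2}$.

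The two $\pm6b^i$ contributions cancel exactly. This reflects the conformal Killing/Kazdan--Warner structure: $2\cdot3=\Lam_1$ for $\S^3$ with $Q^3=2$. We conclude that $\frac{\d b^i}{\d t}=o(1)F_2^{1/2}$, and the lemma follows from \eqref{def:B}.

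The main obstacle is making the operator term rigorous. $\cP^3_{\hat h}$ is pseudo-differential, so the expansion $\cP^3_{\hat h}x_i=6x_i+o(1)$ requires the transformation law $\cP^3_{\hat h}=e^{-3\hat v}\cP^3_{g_{\S^3}}$ applied to $x_i$ with a correct commutator. Concretely, we would exploit the identity $e^{-3\hat v}\cP^3(x_i)=e^{-3\hat v}\cdot6x_i$, which is exact. The error from pairing it against $\al f_{\hat\Phi}-T_h$ in $\d s_{\hat h}=e^{3\hat v}\d s_{g_{\S^3}}$ then disappears exactly, leaving only $6\int x_i(\cdot)\,\d s_{g_{\S^3}}$. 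Comparing this with $b^i$ costs $\|e^{3\hat v}-1\|_{L^2}F_2^{1/2}=o(1)F_2^{1/2}$. Similarly, the $T_h-2$ term needs $\|T_h-2\|_{L^4}$ control, or else Hölder combined with the $H^{3/2}$ bound used in Lemma \ref{lem:3.8}, absorbing an extra $G_2^{1/2}$. If only $o(1)(F_2+G_2)^{1/2}$ is obtainable, we would need to sharpen this using the $L^\infty$ bound on $\hat v$ and $T_h\to2$.
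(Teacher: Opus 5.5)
Your proof is correct, but it is organized differently from the paper's.

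\textbf{The paper's route.} The paper never differentiates $T_h$. Using $\cP^3_{g_{\S^3}}\hat v+2=T_he^{3\hat v}$, self-adjointness and $\cP^3_{g_{\S^3}}x_i=6x_i$, it first rewrites $\int_{\S^3}x_iT_h\,\d s_{\hat h}=6\int_{\S^3}x_i\hat v\,\d s_{g_{\S^3}}$. After this, $b^i$ depends only on $\al$, $f_{\hat\Phi}$ and $\hat v$. The paper then differentiates in the normalized frame, inserts formula \eqref{33} for $\hat v_t$, and integrates by parts against $\operatorname{div}_{g_{\S^3}}(\hat\xi e^{3\hat v})$, splitting into $I,II,III$. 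The $\d f_{\hat\Phi}\cdot\hat\xi$ term cancels in $II$. The cancellation $3\cdot 2=\Lam_1$ that you found shows up in $III$ as the small factor $\al f(p)-2e^{-3\hat v}$.

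\textbf{Your route.} You differentiate in the unnormalized frame with test function $x_i\circ\hat\Phi^{-1}$. The motion of $\hat\Phi$ then enters only through $\pa_t(x_i\circ\hat\Phi^{-1})=-\hat\xi_i\circ\hat\Phi^{-1}$, which is $O(F_2)$ directly by Lemma \ref{35}. The remaining terms use:
\begin{itemize}
\item the $T$-curvature evolution equation;
\item symmetry of $\cP^3_{\hat g}$ in $L^2(\d s_{\hat g})$ and conformal naturality;
\item the exact law $\cP^3_{\hat h}x_i=6e^{-3\hat v}x_i$.
\end{itemize}
You also get $|\al_t|=o(1)F_2^{1/2}$ via $\beta^0=0$, slightly sharper than the paper's $O(F_2^{1/2})$ combined with $\int_{\S^3}x_i\,\d s_{\hat h}=0$.

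\textbf{What each buys.} Your version needs neither \eqref{33} nor any integration by parts. It only uses $L^2$-smallness of $T_h-2$ and $f_{\hat\Phi}-f(p)$, plus $L^\infty$-smallness of $\hat v$. The paper's version avoids the nonlocal evolution of $T_g$ and any appeal to self-adjointness or naturality of $\cP^3_{\hat g}$.

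\textbf{Your closing hedges are unnecessary.}
\begin{itemize}
\item The covariance $\cP^3_{\hat h}=e^{-3\hat v}\cP^3_{g_{\S^3}}$ is exact, so there is no commutator or error term.
\item Since $|x_i|\le 1$, Cauchy--Schwarz gives $\bigl|3\int_{\S^3}x_i(T_h-2)(\al f_{\hat\Phi}-T_h)\,\d s_{\hat h}\bigr|\le 3\|T_h-2\|_{L^2(\S^3,\hat h)}F_2^{1/2}=o(1)F_2^{1/2}$. No $L^4$ or $G_2$ control is needed.
\end{itemize}

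\textbf{Minor sign points.} The volume-variation term is $+3\int_{\S^3}(x_i\circ\hat\Phi^{-1})(\al f-T_g)^2\,\d s_{\hat g}$, not $-3\int\cdots$. Your first group also carries an overall sign flip, since you effectively differentiated $T_g-\al f$. Both are harmless: the volume term is $O(F_2)$ either way, and the two $6b^i$ contributions still cancel.
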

	\begin{proof}
		Since $B^i=\frac{\sqrt{2}}{\pi}b^i$ with $b^i$ defined in \eqref{67}, it suffices to prove the estimate for $b^i$.
		From \eqref{flow-g*}, \eqref{samev},  \eqref{eigenvalue} and  the self-adjointness of $\cP^{3}_{g_{\S ^3}}$, we compute
		\[
		\int_{\S ^3}x_i T_h\, \d s _{\hat h} 
		=
		\int_{\S ^3}x_i (\cP^{3}_{g_{\S^3}}\hat v +2)\, \d s _{g_{\S ^3}}=\int_{\S ^3}x_i \cP^{3}_{g_{\S^3}}\hat v\, \d s _{g_{\S ^3}}
		=6\int_{\S ^3}x_i\hat v\, \d s _{g_{\S ^3}}.
		\]
		Consequently,
		\begin{align*}
			\frac{\d b^i}{\d t}
			=&\,\frac{\d}{\d t}\Big(\int_{\S ^3}x_i(\al  f_{\hat \Phi}-T_h)\, \d s _{\hat h} \Big)\\
			=&\,\int_{\S ^3}x_i\Big(\frac{\d(\al  f_{\hat \Phi})}{\d t}+3\hat  v_t(\al  f_{\hat \Phi}-2e^{-3\hat v})\Big)\, \d s _{\hat h} \\
			=&\,\al _t\int_{\S ^3}x_i f_{\hat \Phi} \, \d s _{\hat h} 
			+\al \int_{\S ^3} x_i(\d f_{\hat \Phi}\cdot\hat\xi+3\hat v_t(f_{\hat \Phi}-f(p)))\, \d s _{\hat h} \\
			&\,+3\int_{\S ^3} x_i \hat v_t(\al  f(p)-2e^{-3\hat  v})\, \d s _{\hat h} \\:=&\, I+II+III,
		\end{align*}
		where $\hat\xi=(\d \hat\Phi)^{-1}\frac{\d \hat\Phi}{\d t}$. It remains to estimate $I$, $II$ and $III$.

		\textbf{Estimate of $I$.}
		By \eqref{42}-\eqref{alpha_t}, \eqref{26}, \eqref{62} and H\"older inequality,  we have 
		\begin{align*}
			|I|=&\,\Big|\al _t\int_{\S ^3}x_i f_{\hat \Phi} \, \d s _{\hat h} 
			\Big|=\Big|\al _t\int_{\S ^3}x_i (f_{\hat \Phi}-f(p)) \, \d s _{\hat h} \Big|\\= &\, O(|\al_t|\|f_{\hat \Phi}-f(p)\|_{L^2(\S ^3,\hat h)})\\= &\, O(\|f_{\hat \Phi}-f(p)\|_{L^2(\S ^3,\hat h)}
			F_2(t)^{\frac{1}{2}})=o(1)F_2(t)^{\frac{1}{2}},	
		\end{align*}
		where we used Proposition \ref{prop:3.7} in the last step.
		
		\textbf{Estimate of $II$.}  Using \eqref{33} and integration by parts, we write
		\begin{align*}
			II=&\,\al \int_{\S ^3} x_i(\d f_{\hat \Phi}\cdot\hat\xi+3 \hat v_t(f_{\hat \Phi}-f(p)))\, \d s _{\hat h} \\
			=&\,3\al \int_{\S ^3}x_i \hat w_t\circ\hat \Phi (f_{\hat \Phi}-f(p)) \, \d s _{\hat h} \\
			&\,+\al \int_{\S ^3}x_i \operatorname{div}_{g_{\S ^3}}(\hat \xi e^{3\hat v}(f_{\hat \Phi}-f(p))\, \d s _{g_{\S ^3}}\\
			=&\,3\al \int_{\S ^3}x_i \hat w_t\circ \hat \Phi (f_{\hat \Phi}-f(p)) \, \d s _{\hat h} 
			-\al \int_{\S ^3}\hat \xi_i (f_{\hat \Phi}-f(p)) \, \d s _{\hat h} . \end{align*}
		Hence, by \eqref{42},  \eqref{flow3}, \eqref{26}, H\"older inequality and Lemma \ref{35} that 
		\begin{align*}
			|II|=&\,
			O(\|f_{\hat \Phi}-f(p)\|_{L^2(\S ^3,\hat h)}
			(\|\hat w_t\circ\hat \Phi\|_{L^2(\S ^3,\hat h)}+\|\hat\xi\|_{L^\infty(\S ^3)}))\\
			= &\,O(\|f_{\hat \Phi}-f(p)\|_{L^2(\S ^3,\hat h)} F_2(t)^{\frac{1}{2}})=o(1)F_2(t)^{\frac{1}{2}},
		\end{align*}
		where we used Proposition \ref{prop:3.7} in the last step.
		
		\textbf{Estimate of $III$.}	Using \eqref{33} and integration by parts, we obtain
		\begin{align*}
			III =&\,3\int_{\S ^3} x_i \hat v_t(\al  f(p)-2e^{-3\hat v})\, \d s _{\hat h} \\
			=&\,3\int_{\S ^3} x_i \hat w_t\circ\hat \Phi(\al  f(p)-2e^{-3\hat v})\, \d s _{\hat h} \\
			&\,+\int_{\S ^3} x_i (\al  f(p)-2e^{-3\hat v}) \operatorname{div}_{g_{\S ^3}}(\hat\xi e^{3 \hat v})\, \d s _{g_{\S ^3}}\\
			=&\,3\int_{\S ^3} x_i \hat w_t\circ \hat \Phi(\al  f(p)-2e^{-3 \hat v})\, \d s _{\hat h} 
			-\int_{\S ^3} (\al  f(p)-2e^{-3\hat v})\hat \xi_i\, \d s _{\hat h}\\
			&\,-6\int_{\S ^3} x_i \la \nabla_{g_{\S^3}} \hat v, \hat\xi \ra_{g_{\S^3}}  \, \d s _{g_{\S ^3}}.
		\end{align*}
		By \eqref{flow3}, \eqref{26}, H\"older inequality, Lemma \ref{35} and  Proposition  \ref{prop:3.7}, we have
		\begin{align*}
			|III|= &\,O((\|1-e^{-3\hat v}\|_{L^2(\S ^3,g_{\S ^3})}
			+|\al  f(p)-2|)(\|\hat w_t\circ \hat \Phi\|_{L^2(\S ^3,\hat h)}+\|\hat\xi\|_{L^\infty(\S ^3)}))\\
			&\,+O(\|\hat v\|_{H^1(\S ^3,g_{\S ^3})}\|\hat \xi\|_{L^\infty(\S ^3)})
			\\=&\, o(1)F_2(t)^{\frac{1}{2}}.
		\end{align*}

		Combining the estimates for $I$, $II$, and $III$ yields $\frac{\d b^i}{\d t}=o(1)F_2(t)^{1/2}$.
		The desired conclusion for $B^i$ follows immediately.
	\end{proof}

	From Lemma \ref{lem:5.1} we obtain the following estimate for $F_2$. 
	\begin{lem}\label{lem:5.2}
		The following estimate holds for sufficiently large $t$:
		\[F_2(t)= (1+o(1))|B|^2,\]
		where  $o(1)\to 0$ as $t\to+\infty $.
	\end{lem}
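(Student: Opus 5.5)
The plan is to follow the Malchiodi--Struwe argument: compare $F_2$ with the low-mode energy, and use the slow variation of $B$ from Lemma \ref{lem:5.1} to show that the high modes $\beta^i$, $i\ge5$, are asymptotically negligible.

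Since $\beta^0=0$ by \eqref{beta^0}, Parseval \eqref{F2(t)} and \eqref{betab} give
\[
F_2(t)=\sum_{i=1}^4|\beta^i|^2+\sum_{i\ge5}|\beta^i|^2=|B|^2+o(1)F_2(t)+\sum_{i\ge5}|\beta^i|^2 .
\]
So it suffices to show $\sum_{i\ge5}|\beta^i|^2=o(1)F_2(t)$. Set $F_2^{\ge5}:=\sum_{i\ge5}|\beta^i|^2$.

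Suppose instead that along some sequence $F_2^{\ge5}\ge\epsilon F_2$. By \eqref{65} and \eqref{eigenvalue}, $\Lam_i^g\ge 6+o(1)$ for $1\le i\le4$ and $\Lam_i^g\ge 24+o(1)$ for $i\ge5$. Inserting these into \eqref{G2(t)} gives
\[
G_2\ge(6+o(1))|B|^2+(24+o(1))F_2^{\ge5}.
\]
Lemma \ref{lem:3.8} then yields
\[
\frac{\d F_2}{\d t}\le -(12+o(1))|B|^2-(48+o(1))F_2^{\ge5}+(12+o(1))\big(|B|^2+F_2^{\ge5}\big)\le -(36+o(1))F_2^{\ge5}+o(1)|B|^2 .
\]
Consider the quantity $D:=F_2-(1+\eta)|B|^2$ for a small $\eta>0$. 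Using Lemma \ref{lem:5.1},
\[
\frac{\d}{\d t}|B|^2=2B\cdot\frac{\d B}{\d t}=o(1)|B|F_2^{1/2}=o(1)F_2 .
\]
Hence, whenever $D\ge0$ (so that $F_2^{\ge5}\ge \eta|B|^2/2$ up to $o(1)F_2$, and hence $F_2^{\ge5}\ge c(\eta)F_2$), we get
\[
\frac{\d D}{\d t}\le -c F_2^{\ge5}+o(1)F_2\le -c'(\eta)F_2<0 .
\]
This strict decay should be exploited in the same way as the Malchiodi--Struwe argument ([MS2006, Lemma 3.10]). If $D\ge0$ on an interval, $D$ decreases at rate comparable to $F_2$, i.e. comparable to $D+|B|^2$. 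I would aim for a differential inequality of the form $\frac{\d}{\d t}\log F_2\le -c$ on such intervals, while $|B|^2$ changes only by $\int o(1)F_2$. Then either $F_2$ drops below $(1+\eta)|B|^2$ in finite time, or we obtain exponential decay of $F_2$. In the latter case the argument of Theorem \ref{thm:4.1}, Step 3, shows that $\hat\Phi(t)$ converges, which contradicts the concentration in Proposition \ref{prop:3.7} (no convergence, since $\hat\Phi\to$ a point while $F_2\in L^1$ makes $\hat\Phi$ Cauchy). Once $D<0$ holds at some time, the sign of $\frac{\d D}{\d t}$ at $D=0$ keeps $D<0$ forever. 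Letting $\eta\to0$ gives $F_2\le(1+o(1))|B|^2$, and the reverse inequality is immediate from \eqref{betab}.

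The main obstacle is the exclusion step. I need to rule out that $F_2$ persists with $D\ge0$ without contradiction, that is, to justify that exponential decay of $F_2$ forces convergence of $\hat\Phi(t)$ and hence of the flow (via Lemma \ref{35} and the integrability of $F_2^{1/2}$). This convergence contradicts the standing assumption that the flow does not converge. Some care is also needed there because the $o(1)$ terms depend on $t$ only through Proposition \ref{prop:3.7}.
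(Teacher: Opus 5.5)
Your argument is correct and is essentially the paper's. The paper likewise first rules out persistent dominance of the modes $i\ge5$, since that would force exponential decay of $F_2$ and hence convergence, contradicting the standing hypothesis. It then traps the flow using Lemma \ref{lem:3.8} (where the first four modes contribute $2\Lambda_i-12=0$ and the higher modes at least $36$) together with Lemma \ref{lem:5.1}; the only difference is that it tracks the ratio $\delta=F_2/|B|^2-1$ through $\dot\delta\le-(36\delta+o(1))$, where you use the barrier $F_2-(1+\eta)|B|^2$ for each $\eta$.

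One small slip: what makes $\hat\Phi(t)$ Cauchy is the integrability of $F_2^{1/2}$ (which exponential decay gives, via Lemma \ref{35}), not $F_2\in L^1$, which holds in the divergent case as well. You state this correctly yourself at the end.
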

	\begin{proof}
		Set   $ \hat F_2:=\sum_{i= 5}^{\infty}|\beta^i|^2$. It follows from \eqref{62} and \eqref{betab} that 
		\be\label{lem:5.2-1}
		F_2=|\beta|^2+\hat F_2=|B|^2+\hat F_2+o(1) F_2.
		\ee
		We complete the proof in  three steps.
		
		\textbf{Step 1. Excluding the case $2|B|^2\leq \hat F_2$ for  large $t$. }
		Assuming that $2|B|^2\leq \hat F_2$ for sufficiently large $t$. Then  \eqref{lem:5.2-1} implies that   \[\hat F_2\geq \frac{2}{3}(1+o(1))F_2\quad \text{ for sufficiently large $t$.}\] 
		Arguing as in the derivation of \eqref{4.17}, we obtain for large $t$ that
		\[
		G_2\geq \Lam_5^g \hat F_2=(\Lam_5+o(1))\hat F_2\geq
		\frac{1}{2}\Lam_5 F_2=12 F_2,
		\]
		where we used  $\Lam_5=24$.  Applying Lemma \ref{lem:3.8}, we conclude that
		\[\frac{\d F_2}{\d t}\leq -(12+o(1))F_2\leq -F_2\quad \text{ for sufficiently large $t$.}\]
		As in the proof of Theorem \ref{thm:4.1},
		this implies that the flow
		$g(t)$ converges exponentially fast to a metric of $T$-curvature
		a positive constant multiple of $f$, zero $Q$-curvature and mean curvature, contradicting our hypothesis in this section.
		Therefore, there exists an arbitrarily large time $t_1\ge 0$ such that
		$2|B(t_1)|^2> \hat F_2(t_1)$.
		
		\textbf{Step 2. Introducing $\delta(t)$ and estimating $\hat{F}_2$.}
		Write $F_2=(1+\delta(t))|B|^2$.
		Then \eqref{lem:5.2-1} gives that 
		\be\label{lem:5.2-3}
		\hat{F}_2=F_2-\frac{F_2}{1+\delta(t)}+o(1) F_2=\frac{\delta(t)}{1+\delta(t)} F_2+o(1) F_2.
		\ee
		Since $F_2-|B|^2\geq o(1)F_2$ by \eqref{lem:5.2-1},
		we have $\delta(t)\geq o(1)$ and hence $\delta(t)>-\frac{1}{2}$ for all $t\geq t_0$ and
		sufficiently large $t_0\geq 0$.
		In addition, for sufficiently large $t_1$ as above, we have
		$\delta(t_1)=\frac{F_2(t_1)}{|B(t_1)|^2}-1<4$ since $F_2(t_1)\leq 3|B(t_1)|^2+o(1) F_2(t_1)$. By continuity,  $\delta(t)<4$ for all $t$ sufficiently close to $t_1$.

		\textbf{Step 3. A differential inequality for $\delta(t)$.}
		From  \eqref{F2(t)}-\eqref{G2(t)}, and Lemma \ref{lem:3.8}
		we obtain
		\begin{align*}
			\frac{\d F_2}{\d t}=&\,\frac{\d\delta}{\d t}|B|^2+2(1+\delta)B\cdot\frac{\d B}{\d t}\\=&\,-\sum_{i= 0}^{\infty}(2\Lam_i-12+o(1))|\beta^i|^2\\
			\leq  &\,-36\hat F_2+o(1)F_2(t)\\=&\,
			-\Big(\frac{36\delta}{1+\delta}+o(1)\Big)F_2,
		\end{align*}
		in view of  $\Lam_i\geq \Lam_5=24$ for all $i\geq 5$  and \eqref{lem:5.2-3}.
		Since Lemma \ref{lem:5.2} implies $|B\cdot\frac{\d B}{\d t}|= o(1)F_2$
		for $t$ near $t_1$, it follows that
		\[\frac{\d\delta}{\d t}|B|^2\leq -\Big(\frac{36\delta}{1+\delta}+o(1)\Big)F_2
		=-(36\delta+o(1))|B|^2
		\]
		and then
		\be\label{lem:5.2-4}
		\frac{\d\delta}{\d t}\leq -(36\delta+o(1)).
		\ee
		In particular, for sufficiently large $t_1$,
		we obtain  $\delta(t)<4$ for all $t\geq t_1$. The differential inequality
		\eqref{lem:5.2-4} shows that $\delta(t)\to 0$
		as $t\to+\infty $, which completes the proof.
	\end{proof}

	\section{The shadow flow}\label{sec:6}

	Recall from Proposition \ref{prop:3.7} that 
	the center of mass $S(t)$ of  $\hat g(t)$
	is given approximately by the shadow flow 
	\[
	p=p(t)=\dashint_{\S ^3}\hat \Phi(t)\, \d s _{g_{\S ^3}}.
	\]
	In what follows, we  relate $B$ (equivalently $b$) to the gradient of $f$ evaluated at $\hat p= \frac{p}{|p|}$.
	\subsection{Scaled stereographic projection}\label{sec:5.2}

	Recall that $\Psi$ denotes the conformal map from the upper half-space $\R_{+}^{4}$ to the unit ball $\B^{4}$ introduced in  \eqref{Mobius-map}.  Its inverse $\Psi^{-1}$, when  restricted to the boundary $\pa \B^4=\S^3$, maps into $\pa \R_{+}^{4}=\R^3$ and is given by \eqref{75}. For any $q=(q_1,q_2,q_3)\in\R ^3$ and $r>0$, we define the conformal map $\Psi_{q,r}:\R ^4_+\to\B ^4$ by 
	\be\label{sec:5.2-0}
	\Psi_{q,r}:=\Psi\circ \delta_{q,r},
	\ee
	where $\delta_{q,r}$ is the affine linear transformation 
	\begin{align*}
		\delta_{q,r}:\R ^4_+&\longrightarrow \R ^4_+,\\y&\longmapsto (q,0)+ry.
	\end{align*}
	Restricting  $\Psi_{q, r}$ to the boundary $\pa \R_{+}^{4}=\R^3$, we write $\hat \Psi_{q, r}=\Psi_{q, r}|_{\R ^3}=\hat\Psi\circ\hat\delta_{q, r}$, where  $\hat \delta_{q, r}=\delta_{q, r}|_{\R ^3}$.  A direct computation yields that 
	\be\label{sec:5.2-1}
	\frac{\pa \hat\Psi_{q,r}}{\pa  p_i}\Big|_{q=0,r=1}=\frac{\pa \hat\Psi}{\pa  y_i}:=e_i,\quad i=1,2,3,
	\ee
	and
	\be\label{sec:5.2-2}
	\frac{\pa \hat \Psi_{q,r}(y)}{\pa  r}\Big|_{q=0,r=1}=\sum_{i=1}^3 y_i e_i(y),\quad y\in\R ^3,
	\ee
	where  the vector fields $\{e_1,e_2,e_3\}$ are given explicitly  by
	\begin{align}
		e_1(y)=&\,\frac{1}{(1+|y|^2)^2}(2(1-2|y_1|^2+|y|^2),-4y_1y_2,-4y_1y_3,-4y_1)\nonumber\\
		=&\,(1+x_4-|x_1|^2,-x_1x_2,-x_1x_3,-x_1(1+x_4)),\label{76-1}	\\
		e_2(y)=&\,\frac{1}{(1+|y|^2)^2}(-4y_2y_1,2(1-2|y_2|^2+|y|^2),-4y_2y_3,-4y_2)\nonumber\\
		=&\,(-x_2x_1,1+x_4-|x_2|^2,-x_2x_3,-x_2(1+x_4)),\label{76-2}\\
		e_3(y)=&\,\frac{1}{(1+|y|^2)^2}(-4y_3y_1,-4y_3y_2,2(1-2|y_3|^2+|y|^2),-4y_3)\nonumber\\
		=&\,(-x_3x_1,-x_3x_2,1+x_4-|x_3|^2,-x_3(1+x_4)),\label{76}	
	\end{align}
	where we used the stereographic coordinates $y\to x\in\S^3$ given by \eqref{75}.
	In particular, we obtain
	\be \label{77}
	\sum_{i=1}^3y_ie_i(y)=(x_1x_4,x_2x_4,x_3x_4, |x_4|^2-1).
	\ee 
	
	For $t_0\geq 0$ and $t\geq 0$ close to $t_0$, let
	\[
	\Phi_{t_0}(t):=\Phi(t_0)^{-1}\Phi(t) .
	\]
	and 
	\[
	\hat\Phi_{t_0}(t):=\Phi_{t_0}(t)|_{\S ^3}=\hat \Phi(t_0)^{-1}\hat \Phi(t).
	\]
	Define $q=q(t)$, $r=r(t)$ such that
	\be\label{sec:5.2-3}
	\hat \Phi_{t_0}(t)\circ \hat\Psi=\hat\Psi_{q(t),r(t)}.
	\ee 
	By \eqref{sec:5.2-1} and \eqref{sec:5.2-2}, the vector field
	$\hat\xi=  (\d\hat\Phi(t_0))^{-1}\frac{\d\hat\Phi}{\d t}|_{t=t_0}=\frac{\d\hat\Phi_{t_0}}{\d t}|_{t=t_0}$ has the following representation
	\[
	\hat \xi =\frac{\d}{\d t}(\hat\Phi_{t_0}\circ\hat\Psi)\Big|_{t=t_0}
	=\sum_{i=1}^3\frac{\pa \hat\Psi_{q,r}}{\pa  p_i}\frac{\d p_i}{\d t}
	+\frac{\pa \hat\Psi_{q,r}}{\pa  r}\frac{\d r}{\d t}
	=\sum_{i=1}^3\Big(\frac{\d p_i}{\d t}+y_i\frac{\d r}{\d t}\Big)e_i,
	\]
	where the derivative of $\hat\Psi_{q,r}$ are evaluated at $(q,r)=(0,1)$.
	We then define the map
	\be \label{X_defn}
	X:=\int_{\S ^3}\hat\xi \, \d s _{g_{\S ^3}}
	=(X_1,X_2,X_3,X_4)\in\R ^4,
	\ee 
	and using cancellations by oddness, from \eqref{76-1}-\eqref{77}, we obtain
	\be \label{78}
	X_i=\frac{\d p_i}{\d t}\int_{\S ^3} (1-|x_i|^2)\, \d s _{g_{\S ^3}}=\frac{3\pi^2}{2}\frac{\d p_i}{\d t} \quad \mbox{ for }\,i=1,2,3,
	\ee 
	and
	\be \label{79}
	X_4=-\frac{\d r}{\d t}\int_{\S ^3} (1-|x_4|^2)\, \d s _{g_{\S ^3}}=-\frac{3\pi^2}{2}\frac{\d r}{\d t}.
	\ee

	\subsection{Chracterization of the limit of the shadow flow}\label{sec:5.3}

	Fix $t_0\geq 0$. Consider a rotation that maps $\hat p(t_0)$
	into the north pole $\mathcal{N}$.
	Under this normalization, the map $\hat \Phi(t_0):\S ^3\to\S ^3$
	can be written in the form
	$\hat \Phi(t_0)=\hat\Psi_\var \circ \pi$
	for some $\var =\var (t_0)>0$,
	where $\pi:=\Psi^{-1}|_{\S^3}$ and  $\hat\Psi_\var (y):=\hat\Psi(\var  y)=\hat\Psi_{0,\var }(y)$, as defined in \eqref{75} and \eqref{sec:5.2-0}.
	Consequently, in stereographic coordinates one has
	\[
	\hat \Phi(t)\circ\hat\Psi=\hat \Phi(t_0)\circ\hat\Phi_{t_0}(t)\circ\hat\Psi=\hat\Psi_\var \circ\hat\delta_{q,r},
	\]
	in view of   \eqref{sec:5.2-0} and \eqref{sec:5.2-3}.
	
	For the statements of the  following lemmas, we
	recall that  $f$  is extended as $f(p)=f(\frac{p}{|p|})$
	for $p\in\B ^4$ with $|p|>1/2$.
	In addition, for a fixed time $t=t_0$,
	we write $\hat\Phi=\hat\Phi(t_0)$
	for brevity, where  $\hat \Phi=\hat\Psi_\var \circ \pi$.

	\begin{lem}\label{lem:5.3}
		The following estimate holds:
		\[
		\|f_ {\hat\Phi}-f(p)\|_{L^2(\S ^3,g_{\S ^3})}+\|\nabla_{g_{\S ^3}} f _{\hat \Phi}\|_{L^{\frac{6}{5}}(\S ^3,g_{\S ^3})}= O(\var) .
		\]
	\end{lem}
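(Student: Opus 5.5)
The plan is to write everything in stereographic coordinates and compute directly with the explicit form $\hat\Phi=\hat\Psi_\var\circ\pi$. We rotate so that $\hat p(t_0)=\mathcal N$. Then $f_{\hat\Phi}(x)=f(\hat\Psi(\var y))$ with $y=\pi(x)\in\R^3$, and $\d s_{g_{\S^3}}=\big(\frac{2}{1+|y|^2}\big)^3\d y$.

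\textbf{Locating the center.} We first show that $p=\dashint_{\S^3}\hat\Phi\,\d s_{g_{\S^3}}$ satisfies $|\hat p-\mathcal N|=O(\var^2)$ (indeed $p$ lies on the $x_4$-axis by symmetry, and $1-|p|=O(\var)$). In particular $|f(p)-f(\mathcal N)|=0$ after the rotation, and in general $O(\var^2)$, since $f(p)=f(p/|p|)$. It is a direct computation that
\[
\dashint_{\S^3}\hat\Psi(\var\pi(x))\,\d s_{g_{\S^3}}
\]
is a multiple of $e_4$ by oddness in $y$. Its fourth component is
\[
\dashint\frac{1-\var^2|y|^2}{1+\var^2|y|^2}\,\d s=1-O(\var).
\]
The main contribution comes from $|y|\sim\var^{-1}$, where the density is $\sim|y|^{-6}$, so the tail integral is $O(\var^{3})\cdot\var^{-3}\cdot$, and more precisely $\int_{|y|>1/\var}|y|^{-6}\d y=O(\var^3)$. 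The bulk $|y|<1/\var$ contributes $O(\var^2|y|^2)$ integrated against $|y|^{-6}$, which gives $O(\var)$.

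\textbf{$L^2$ term.} Since $|\hat\Psi(\var y)-\mathcal N|\le C\min(\var|y|,1)$ and $f$ is Lipschitz, we have
\[
\|f_{\hat\Phi}-f(p)\|_{L^2}^2\le C\int_{\R^3}\min(\var^2|y|^2,1)(1+|y|^2)^{-3}\,\d y .
\]
Splitting the integral at $|y|=1$ and $|y|=1/\var$: the region $|y|<1$ gives $O(\var^2)$; the region $1<|y|<1/\var$ gives $\var^2\int_1^{1/\var}r^{-2}\,\d r=O(\var^2)$; and the region $|y|>1/\var$ gives $\int r^{-4}=O(\var^3)$. Together with $|f(\mathcal N)-f(p)|=O(\var^2)$ from the first step, this yields the bound $O(\var)$.

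\textbf{Gradient term.} Conformal invariance gives
\[
|\nabla_{g_{\S^3}}f_{\hat\Phi}|(x)=|\nabla f|(\hat\Phi(x))\,\det(\d\hat\Phi)^{1/3}(x)\le C\,\frac{\var(1+|y|^2)}{1+\var^2|y|^2}.
\]
Hence
\[
\int|\nabla f_{\hat\Phi}|^{6/5}\,\d s\le C\int_{\R^3}\Big(\frac{\var(1+|y|^2)}{1+\var^2|y|^2}\Big)^{6/5}(1+|y|^2)^{-3}\,\d y .
\]
For $|y|<1/\var$ the integrand is $\le C\var^{6/5}(1+|y|^2)^{-9/5}$, which is integrable in $\R^3$ since $18/5>3$; this piece is $O(\var^{6/5})$. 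For $|y|>1/\var$ the integrand is $\le C\var^{-6/5}|y|^{-6}$, which integrates to $O(\var^{-6/5}\var^{3})=O(\var^{9/5})$. Raising to the power $5/6$ gives $O(\var)$.

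The main obstacle is the first step: controlling $f(p)$ versus $f(\mathcal N)$ requires the precise symmetry of $\hat\Psi_\var$ and the convention $f(p)=f(p/|p|)$. The remaining steps are routine scaling integrals. The exponent $6/5$ is exactly what makes the bulk region integrable, and it is the borderline choice that yields the linear rate $O(\var)$.
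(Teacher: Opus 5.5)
Your proof is correct and is essentially the direct stereographic computation the paper relies on (it omits the proof, citing \cite[Lemma 4.3]{H2012}, and carries out the same $L^2$ calculation itself in \eqref{lem:5.5-6.14}): $p$ lies on the positive $x_4$-axis, so $f(p)=f(\mathcal{N})$, and the conformal factor $\varepsilon(1+|y|^2)/(1+\varepsilon^2|y|^2)$ of $\hat\Psi_\varepsilon\circ\pi$ reduces both terms to elementary radial integrals. Two side remarks are inaccurate but do not affect the argument: $1-|p|$ is actually $O(\varepsilon^2)$, with the region $|y|\lesssim 1$ rather than $|y|\sim\varepsilon^{-1}$ dominating (compare Lemma \ref{lem:5.8}), and the exponent $6/5$ is not borderline, since any exponent $q<3/2$ also gives $O(\varepsilon)$.
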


	\begin{proof}
		The proof is similar  to that in  \cite[Lemma 4.3]{H2012},
		and we omit the details here.
	\end{proof}

	\begin{lem}\label{lem:5.4}
		The following estimate holds:
		\[
		\|\hat v\|_{H^3(\S ^3,g_{\S ^3})}
		= O(F_2^{1/2}+\|f_ {\hat \Phi}-f(p)\|_{L^2(\S ^3,g_{\S ^3})}).
		\]
	\end{lem}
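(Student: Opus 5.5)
The estimate is an elliptic bound for the boundary equation in \eqref{flow5}. The argument follows Step 2 of the proof of Theorem \ref{thm:4.1}, but without the a priori decay $F_2\le Ce^{-\delta t}$. We keep $F_2$ and the oscillation $f_{\hat\Phi}-f(p)$ as error terms. Using \eqref{samev} and $T_{g_{\B^4}}=2$, we rewrite the equation as
\[
\cP^3_{g_{\S^3}}\hat v = (T_h-\al f_{\hat\Phi})e^{3\hat v} + \al (f_{\hat\Phi}-f(p))e^{3\hat v} + (\al f(p)-2)e^{3\hat v} + 2(e^{3\hat v}-1).
\]
By Proposition \ref{prop:3.7} and the argument leading to \eqref{43}, $\hat v$ is uniformly bounded in $L^\infty$, so $e^{3\hat v}$ is bounded above and below.

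We first take $L^2(\S^3,g_{\S^3})$ norms. The first term is $O(F_2^{1/2})$, because $\d s_{\hat h}$ and $\d s_{g_{\S^3}}$ are comparable. The second term is $O(\|f_{\hat\Phi}-f(p)\|_{L^2})$. The third term needs a separate bound on the constant $\al f(p)-2$. We integrate the definition \eqref{19} against $\d s_{\hat h}$, as in the proof of Proposition \ref{prop:3.7}: \eqref{19} and \eqref{26} give $\al\int f_{\hat\Phi}\,\d s_{\hat h}=4\pi^2$, so $2-\al f(p)=\al\dashint (f_{\hat\Phi}-f(p))\,\d s_{\hat h}$. Hence $|\al f(p)-2|=O(\|f_{\hat\Phi}-f(p)\|_{L^2})$.

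The last term is $2(e^{3\hat v}-1)=6\hat v+O(\hat v^2)$. Since $\hat v\to 0$ in $H^{3}$ (Proposition \ref{prop:3.7} and Sobolev embedding), the quadratic remainder is $o(1)\|\hat v\|_{H^3}$. Projecting onto spherical harmonics, the equation therefore takes the form
\[
\sum_i \Lam_i^2|\hat v^i|^2 \le (36(1+\var)+o(1))\sum_i|\hat v^i|^2 + C(\var)\big(F_2+\|f_{\hat\Phi}-f(p)\|_{L^2}^2\big),
\]
where $\Lam_i\in\{0,6,24,\dots\}$ are the eigenvalues from \eqref{eigenvalue}.

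The main obstacle is the low modes, namely the constants and the first harmonics $x_1,\dots,x_4$, where $\Lam_i^2\le 36$. On these modes the coercivity above gives nothing. We handle them with the normalizations exactly as in \eqref{normal1}, \eqref{noemal2} and \eqref{72}. The volume constraint and the center-of-mass condition \eqref{23} give $\hat V^0=\dots=\hat V^4=0$. Combined with $3\hat v^i=\hat V^i+O(\|\hat v\|^2)$, this yields $\sum_{i\le4}|\hat v^i|^2=o(1)\|\hat v\|_{H^{3}}^2$.

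For $i\ge5$ we have $\Lam_i^2\ge 576>36(1+\var)$, so these high modes are absorbed into the left-hand side. Combining the two ranges,
\[
\|\hat v\|_{H^3}^2\le C\sum_i(1+\Lam_i^2)|\hat v^i|^2 \le C\big(F_2+\|f_{\hat\Phi}-f(p)\|_{L^2}^2\big)+o(1)\|\hat v\|_{H^3}^2.
\]
For large $t$ the $o(1)$ term is absorbed, and taking square roots gives the claim. The only care needed is to keep the nonlinear remainders quadratic in $\hat v$, with the $H^{3/2}\hookrightarrow L^p$ bounds of Lemma \ref{lem:3.2} controlling the products.
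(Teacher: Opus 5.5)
Your proposal is correct and follows essentially the same route as the paper. You split the boundary equation into the same four terms, use Young's inequality to reach
$\sum_i\Lam_i^2|\hat v^i|^2\le(\Lam_1^2(1+\delta)+o(1))\sum_i|\hat v^i|^2+C(\delta)\big(F_2+\|f_{\hat\Phi}-f(p)\|_{L^2}^2\big)$,
handle the modes $i\le 4$ through the volume and center-of-mass normalizations as in \eqref{72}, and then absorb.

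Your explicit bound $|\al f(p)-2|=O(\|f_{\hat\Phi}-f(p)\|_{L^2})$, obtained from $2-\al f(p)=\al\dashint(f_{\hat\Phi}-f(p))\,\d s_{\hat h}$, makes precise a step the paper leaves implicit in its appeal to Proposition \ref{prop:3.7}.
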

	\begin{proof}
		We expand $\hat v=\sum_{i=0}^\infty \hat v^i\hat \varphi_i$
		with respect to the orthonormal basis
		$\{\hat\varphi_i\}_{i\in\N_0}$ as in the proof of Theorem \ref{thm:4.1}.
		By \eqref{72}, 
		we have
		\be \label{80}
		\sum_{i=0}^5|\hat v^i|^2= o(1)\|\hat v\|_{H^{\frac{3}{2}}(\S ^3,g_{\S ^3})}^2,
		\ee 
		where $o(1)\to 0$ as $t\to+\infty $.
		Again by using Sobolev's embedding $H^{\frac{3}{2}}(\S^3,g_{\S^3}) \hookrightarrow L^p(\S^3,g_{\S^3})$ for every $p\in [1,+\infty)$ together with  Lemma \ref{lem:3.2} and the estimate $\|\hat v\|_{H^3(\S^3,g_{\S^3})}^2=o(1)$ in Lemma \ref{lem:3.6-1}, we get
		\begin{align}
			\|e^{3\hat  v}-1\|_{L^2(\S^3,g_{\S^3})}^2  =&\,\int_{\S^3}(e^{3 \hat v}-1)^2 \,\d s_{g_{\S^3}}=9 \int_{\S^3}|\hat v|^2 \,\d s_{\S^3}+O(\|\hat v\|_{H^{\frac{3}{2}}(\S^3,g_{\S^3})}^2 )\nonumber\\
			=&\,9 \int_{\S^3}|\hat v|^2 \,\d s_{\S^3}+o(1)\|\hat v\|_{H^{\frac{3}{2}}(\S^3,g_{\S^3})}\nonumber\\=&\,9 \sum_{i=0}^{\infty}|\hat v^i|^2+o(1)\|\hat v\|_{H^{\frac{3}{2}}(\S^3,g_{\S^3})}.\label{4.19}
		\end{align}
		Next, rewriting the second equation in \eqref{flow5} on $\S^3$, we obtain from \eqref{samev} that 
		\[
		\cP_{g_{\S ^4}}^3 \hat v
		=T_h e^{3\hat v}-2
		=((T_h-\al  f_{\hat \Phi})+\al (f_{\hat \Phi}-f(p))+(\al  f(p)-2))e^{3\hat v}+2(e^{3\hat v}-1).
		\]
		Using \eqref{4.19}, Proposition \ref{prop:3.7} and applying Young's inequality, we infer that for any
		$\delta>0$ there exists  $C(\delta)>0$ such that 
		\begin{align}
			&\,\sum_{i=0}^\infty\Lam_i^2|\hat v^i|^2=\,\|\cP^{3}_{g_{\S ^3}}\hat v\|^2_{L^2(\S ^3,g_{\S ^3})}\nonumber\\
			\leq &\,C(\delta)(\|\al  f_{\hat \Phi}-T_h\|_{L^2(\S ^3,g_{\S ^3})}^2
			+\|f_ {\hat\Phi}-f(p)\|_{L^2(\S ^3,g_{\S ^3})})\nonumber\\&\,+
			4(1+\delta)\|e^{3\hat v}-1\|_{L^2(\S ^3,g_{\S ^3})}^2\nonumber\\
			\leq&\, C(\delta)(\|\al  f_{\hat \Phi}-T_h\|_{L^2(\S ^3,g_{\S ^3})}^2
			+\|f_ {\hat\Phi}-f(p)\|^2_{L^2(\S ^3,g_{\S ^3})})\nonumber\\&\,+
			(\Lam_1^2(1+\delta)+o(1))\sum_{i=0}^\infty|\hat v^i|^2.\label{81}
		\end{align}
		Combining this with \eqref{80}, we may choose $\delta>0$ sufficiently small such that the last term on the
		right-hand side of \eqref{81} can be absorbed into the left-hand side for all sufficiently large $t$.
		This yields the desired estimate.
	\end{proof}

	We are now in a position to relate the components of $b$ defined in \eqref{67}
	to the first and second derivatives of $f$ evaluated at the shadow point $p=p(t)$.
	\begin{lem}\label{lem:5.5}
		As $t\to+\infty$, the following asymptotic expansions hold:
		\[
		b^i =\frac{4\pi^2}{3}\al  \var \Big(\frac{\pa  f} {\pa  x_i}(p)+O(\var )\Big)\quad \mbox{ for }\,i=1,2,3,
		\]
		and 
		\[
		b^4 =-\frac{8\pi^2}{3}\al  \var ^2(\Delta_{g_{\S^3}} f(p)+O(1)|\nabla_{g_{\S^3}} f(p)|^2+O(\var|\ln \var|)).
		\]
	\end{lem}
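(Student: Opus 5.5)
We split $b^i$ into a leading part coming from $f$ and a remainder that is quadratic in small quantities. By \eqref{67}, $b^i=\al\int_{\S^3}x_i f_{\hat\Phi}\,\d s_{\hat h}-\int_{\S^3}x_iT_h\,\d s_{\hat h}$. In the proof of Lemma \ref{lem:5.1} we computed $\int_{\S^3}x_iT_h\,\d s_{\hat h}=6\int_{\S^3}x_i\hat v\,\d s_{g_{\S^3}}$. The normalization \eqref{noemal2} gives $\int_{\S^3}x_i(e^{3\hat v}-1)\,\d s_{g_{\S^3}}=0$, and hence $3\int x_i\hat v=-\int x_i(e^{3\hat v}-1-3\hat v)=O(\|\hat v\|^2_{H^{3/2}})$. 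Writing $\d s_{\hat h}=(1+(e^{3\hat v}-1))\,\d s_{g_{\S^3}}$ in the $f$-term, and using $\int x_i\,\d s_{g_{\S^3}}=0$, we obtain
\[
b^i=\al\int_{\S^3}x_i\big(f_{\hat\Phi}-f(p)\big)\,\d s_{g_{\S^3}}+O\Big(\|f_{\hat\Phi}-f(p)\|_{L^2}\|\hat v\|_{L^2}+\|\hat v\|^2_{H^{3/2}}\Big).
\]
Lemma \ref{lem:5.4} bounds $\|\hat v\|_{H^3}$ by $O(F_2^{1/2}+\|f_{\hat\Phi}-f(p)\|_{L^2})$. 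Lemma \ref{lem:5.2} gives $F_2=(1+o(1))|B|^2$, and $|B|$ is comparable to $|b|$. Lemma \ref{lem:5.3} gives $\|f_{\hat\Phi}-f(p)\|_{L^2}=O(\var)$. So the remainder is $O(\var^2+|b|^2)$, and the quadratic term $|b|^2$ can be absorbed once the leading terms are shown to be $O(\var)$. For $b^4$, where the leading term is only $O(\var^2)$, the error must be tracked more carefully. There the $|b|^2$ contribution becomes $O(\var^2|\nabla f(p)|^2)$, which is exactly the term $O(1)|\nabla_{g_{\S^3}}f(p)|^2$ in the statement.

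The main computation is the explicit integral $\int_{\S^3}x_i f(\hat\Psi_\var\circ\pi(x))\,\d s_{g_{\S^3}}$, with $p$ rotated to $\mathcal N$ as in Section \ref{sec:5.3}. We pass to stereographic coordinates $y\in\R^3$, where $\d s_{g_{\S^3}}=\big(\tfrac{2}{1+|y|^2}\big)^3\d y$, $x_i=\tfrac{2y_i}{1+|y|^2}$ for $i\le 3$ and $x_4=\tfrac{1-|y|^2}{1+|y|^2}$. Near $\mathcal N$ we Taylor expand $f(\hat\Psi(\var y))=f(\mathcal N)+\var\,\nabla f(\mathcal N)\cdot y+\tfrac{\var^2}{2}D^2f(\mathcal N)[y,y]+O(\var^3|y|^3)$, and we cut the domain at $|y|\sim\var^{-1}$.

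For $i\le 3$, only the linear term survives by oddness, giving $\var\,\partial_if(p)\int_{\R^3}\tfrac{2y_i^2}{1+|y|^2}\big(\tfrac{2}{1+|y|^2}\big)^3\d y$. We then check that this constant equals $\tfrac{4\pi^2}{3}$, which matches $\tfrac13\int_{\S^3}x_i^2\cdot 2$-type normalizations. The tail $|y|>\var^{-1}$ and the cubic remainder contribute $O(\var^2)$.

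For $i=4$, the zeroth-order term cancels against $\int x_4=0$ and the linear term vanishes by oddness. The quadratic term gives $\tfrac{\var^2}{2}\sum_j\partial_{jj}f\int y_j^2 x_4\,\d s$. This integral diverges logarithmically at infinity: since $x_4\to-1$ it behaves like $-\int^{1/\var}|y|^{-4}|y|^2\,\d y$. Resolving this is the main obstacle. We plan to integrate by parts on $\S^3$ instead, as in \cite{MS2006,H2012}, using $x_4=-\tfrac13\Delta_{g_{\S^3}}x_4$. This gives $\int x_4 f_{\hat\Phi}=-\tfrac13\int x_4\Delta_{g_{\S^3}}f_{\hat\Phi}$. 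Conformal covariance of $\hat\Phi$ converts $\Delta f_{\hat\Phi}$ into $\var^2$ times $\Delta f$ near $p$ plus gradient terms. The large-$|y|$ region, where $\hat\Phi$ collapses onto $p$ and $|x_4+1|$ is small, then produces at most $O(\var^3|\ln\var|)$. With the constant $-\tfrac{8\pi^2}{3}$, this yields $b^4=-\tfrac{8\pi^2}{3}\al\var^2(\Delta_{g_{\S^3}}f(p)+O(\var|\ln\var|))$ plus the $|b|^2$-type error. The gradient estimate $\|\nabla f_{\hat\Phi}\|_{L^{6/5}}=O(\var)$ from Lemma \ref{lem:5.3} is used to control the boundary and remainder terms in this integration by parts.
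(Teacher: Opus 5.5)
Your reduction step is correct, and it differs from the paper's. The paper integrates by parts against $-\Delta_{g_{\S^3}}x_i=3x_i$ and removes $T_h$ with the Kazdan--Warner identity \eqref{4.9}. You instead use $\int_{\S^3}x_iT_h\,\d s_{\hat h}=6\int_{\S^3}x_i\hat v\,\d s_{g_{\S^3}}$ together with \eqref{noemal2}, which cleanly gives $b^i=\al\int_{\S^3}x_i(f_{\hat\Phi}-f(p))\,\d s_{g_{\S^3}}+O(\|f_{\hat\Phi}-f(p)\|_{L^2}\|\hat v\|_{L^2}+\|\hat v\|_{L^2}^2)$.

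\textbf{The gap: the $b^4$ computation rests on a false claim.} The quadratic-term integral does \emph{not} diverge. Since $x_4\,\d s_{g_{\S^3}}=\frac{1-|y|^2}{1+|y|^2}\,\frac{8\,\d y}{(1+|y|^2)^3}\sim-8|y|^{-6}\,\d y$, the integrand $y_j^2x_4$ is $O(|y|^{-4})$. In $\R^3$ we have $\int_{\{|y|>1\}}|y|^{-4}\,\d y=4\pi\int_1^\infty r^{-2}\,\d r<\infty$. So the direct computation you set up already finishes the proof, and this is exactly what the paper does.

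Use the correct expansion $f(\hat\Psi(\var y))-f(p)=2\var\,\d f(p)y+2\var^2\nabla\d f(p)(y,y)+O(\var^3|y|^3)$. You dropped the factor coming from $\d\hat\Psi(0)=2\,\mathrm{id}$; with your normalization, the displayed integral for $i\le 3$ actually equals $\frac{2\pi^2}{3}$, not $\frac{4\pi^2}{3}$. With the correct expansion:
\begin{itemize}
\item the quadratic term gives $\frac{2\var^2}{3}\Delta_{g_{\S^3}}f(p)\int_{\R^3}\frac{8|y|^2(1-|y|^2)}{(1+|y|^2)^4}\,\d y=-\frac{8\pi^2}{3}\var^2\Delta_{g_{\S^3}}f(p)$;
\item truncating at $|y|=\var^{-1}$ costs only $O(\var^3)$;
\item the $|\ln\var|$ comes solely from the cubic remainder, $\var^3\int_{\{|y|<\var^{-1}\}}|y|^3|y|^{-6}\,\d y\sim\var^3|\ln\var|$.
\end{itemize}

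\textbf{The proposed workaround does not work as described.} Write $\hat\Phi^*g_{\S^3}=e^{2\phi}g_{\S^3}$ with $e^{\phi}=\frac{\var(1+|y|^2)}{1+\var^2|y|^2}$. Then $\Delta_{g_{\S^3}}f_{\hat\Phi}=e^{2\phi}(\Delta_{g_{\S^3}}f)\circ\hat\Phi-\la\nabla\phi,\nabla f_{\hat\Phi}\ra$, but $e^{2\phi}$ is not uniformly $O(\var^2)$. On $1\ll|y|\lesssim\var^{-1}$ one has $x_4e^{2\phi}\,\d s_{g_{\S^3}}\approx-8\var^2|y|^{-2}\,\d y$. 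Hence $\int_{\S^3}x_4e^{2\phi}(\Delta_{g_{\S^3}}f)\circ\hat\Phi\,\d s_{g_{\S^3}}$ is in general of exact order $\var$; for $f=x_4$ it equals $12\pi^2\var+O(\var^2)$. It must therefore cancel against the gradient term to leave $O(\var^2)$.

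Your assertion that the outer region contributes at most $O(\var^3|\ln\var|)$ fails term by term. Moreover, for $|y|\gtrsim\var^{-1}$ the map $\hat\Phi$ is far from $p$, not collapsed onto it. So the coefficient $-\frac{8\pi^2}{3}$ is asserted rather than derived.

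\textbf{A second, smaller omission in the $b^4$ error bookkeeping.} The bound $\|f_{\hat\Phi}-f(p)\|_{L^2}=O(\var)$ from Lemma \ref{lem:5.3} leaves a remainder of order $\var^2$, the same order as the main term. You need the sharper estimate $\|f_{\hat\Phi}-f(p)\|_{L^2}^2=O(\var^2|\nabla_{g_{\S^3}}f(p)|^2+\var^3)$, as in \eqref{lem:5.5-6.14}. It follows from the same Taylor expansion, but you only track the $|b|^2$ part of the remainder.
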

	\begin{proof}
		Using \eqref{S3eigenvalue} and integrating by parts, we obtain  for $i=1,2,3,4$,
		\begin{align*}
			b^i=&\,\int_{\S ^3}x_i(\al  f_{\hat \Phi}-T_h)\, \d s _{\hat h} =-\frac{1}{3}\int_{\S ^3}(\al  f_{\hat \Phi}-T_h)\Delta_{g_{\S ^3}}x_i \, \d s _{\hat h} \\
			=&\,\frac{1}{3}\int_{\S ^3}\la \nabla_{g_{\S ^3}}x_i, \nabla_{g_{\S ^3}} (\al f _{\hat \Phi}-T_h) \ra _{g_{\S ^3}}\, \d s _{\hat h} 
			+\int_{\S ^3}\la \nabla_{g_{\S ^3}}x_i, \nabla_{g_{\S ^3}}\hat v \ra _{g_{\S ^3}}(\al  f_{\hat \Phi}-T_h)\, \d s _{\hat h}\\=&\, \frac{1}{3}\int_{\S ^3}\la \nabla_{g_{\S ^3}}x_i, \nabla_{g_{\S ^3}} (\al f _{\hat \Phi}-T_h) \ra _{g_{\S ^3}}\, \d s _{\hat h} + O(\|\hat  v\|_{H^1(\S ^3, g_{\S ^3})}\|\al  f_{\hat \Phi}-T_h\|_{L^2(\S ^3,\hat{h})}), 
		\end{align*}
		where the last estimate follows from \eqref{42}, H\"older's inequality, and Sobolev embedding. Using Kazdan-Warner type identity \eqref{4.9} and \eqref{42}, we further have 
		\begin{align}
			b^i
			=&\,\frac{\al}{3} \int_{\S ^3}\la \nabla_{g_{\S ^3}}x_i, \nabla_{g_{\S ^3}}f _{\hat \Phi} \ra _{g_{\S ^3}}\, \d s _{g_{\S ^3}}
			+\frac{\al}{3}\int_{\S ^3}\la \nabla_{g_{\S ^3}}x_i, \nabla_{g_{\S ^3}}f _{\hat \Phi} \ra _{g_{\S ^3}}(e^{3\hat v}-1)\, \d s _{g_{\S ^3}}\nonumber\\&\,+O(\|\hat  v\|_{H^1(\S ^3, g_{\S ^3})}\|\al  f_{\hat \Phi}-T_h\|_{L^2(\S ^3,\hat{h})})\nonumber\\=&\, \frac{\al}{3} \int_{\S ^3}\la \nabla_{g_{\S ^3}}x_i, \nabla_{g_{\S ^3}}f _{\hat \Phi} \ra _{g_{\S ^3}}\, \d s _{g_{\S ^3}}+\al \int_{\S ^3}x_i(f_{\hat \Phi}-f(p))(e^{3\hat v}-1)\, \d s _{g_{\S ^3}}\nonumber\\
			&\,
			-\al \int_{\S ^3}\la \nabla_{g_{\S ^3}}x_i, \nabla_{g_{\S ^3}}\hat v \ra _{g_{\S ^3}}(f_{\hat \Phi}-f(p))\, \d s _{\hat h}
			\nonumber\\&\,+ O(\|\hat  v\|_{H^1(\S ^3, g_{\S ^3})}\|\al  f_{\hat \Phi}-T_h\|_{L^2(\S ^3,\hat{h})})\nonumber\\=&\, \frac{\al}{3}\int_{\S ^3}\la \nabla_{g_{\S ^3}}x_i, \nabla_{g_{\S ^3}}f _{\hat \Phi} \ra _{g_{\S ^3}}\, \d s _{g_{\S ^3}}+ O(\|\hat v\|_{H^3(\S ^3,g_{\S ^3})}\|f_ {\hat \Phi}-f(p)\|_{L^2(\S ^3,g_{\S ^3})})\nonumber
			\\&\,+O(\|\hat  v\|_{H^1(\S ^3, g_{\S ^3})}\|\al  f_{\hat \Phi}-T_h\|_{L^2(\S ^3,\hat{h})}),\label{lem:5.5-3}
		\end{align}
		where we also used   \eqref{42}, \eqref{4.19},  H\"older inequality and Sobolev embedding in the final estimate.

		We now represent $\hat\Phi(t_0)$ in stereographic coordinates by writing
		$\hat\Phi(t_0)\circ\hat\Psi=\hat\Psi_\var :\R ^3\to\S ^3$,  where
		$\hat\Psi_\var (y)=\hat\Psi(\var  y)$. Then 
		$\d \hat\Psi_\var (0)=2\var  \operatorname{id}$.  Moreover, the estimates
		$|\d\hat\Psi_\var (y)|=  O(\var) $ and 
		$|\nabla \d \hat\Psi_\var (y)|= O(\var^2)$ hold  uniformly for all $y\in \R^3$,  as a consequence of   \eqref{76-1}-\eqref{76}.
		Noting that $p=\hat\Psi_\var(0)$, the Taylor expansion of $f\circ\hat\Psi_\var$ at $y=0$ yields
		\begin{align}
			f(\hat\Psi_\var (y))-f(p) 
			=&\,\d f(p)\cdot \d\hat\Psi_\var (0) y+\frac{1}{2}\nabla \d f(p)(\d\hat\Psi_\var (0)y,\d\hat\Psi_\var (0)y)+O(\var ^3|y|^3)\nonumber\\
			=&\,2\var  \d f(p) y+2\var ^2 \nabla \d f(p)(y,y)+O(\var ^3|y|^3).\label{lem:5.5-6}
		\end{align}

		Since $f$ is smooth, we deduce from \eqref{lem:5.5-6} that 
		\begin{align}
			\|f_ {\hat \Phi}-f(p)\|_{L^2(\S ^3,g_{\S ^3})}^2
			=&\int_{B(0, \frac{1}{\var} )}|f(\hat\Psi_\var (y))-f(p)|^2\frac{8\, \d y  }{(1+|y|^2)^3}+O\Big(\int_{\R^3\setminus B(0, \frac{1}{\var} )}\frac{ \d y  }{(1+|y|^2)^3}\Big)\nonumber\\
			=&\, O\Big(\var ^2|\nabla_{g_{\S^3}} f(p)|^2\int_{B(0, \frac{1}{\var} )}\frac{|y|^2\, \d y  }{(1+|y|^2)^3}\Big)+O(\var ^3)\nonumber\\
			= &\,O(\var ^2|\nabla_{g_{\S^3}} f(p)|^2)+O(\var ^3).\label{lem:5.5-6.14}
		\end{align}

		Next, we apply \eqref{42} and \eqref{lem:5.5-6} to analyze the leading contribution in 
		\begin{align} 
			\frac{\al}{3}\int_{\S ^3}\la \nabla_{g_{\S ^3}}x_i, \nabla_{g_{\S ^3}}f _{\hat \Phi} \ra _{g_{\S ^3}}\, \d s _{g_{\S ^3}}
			=&\,\al\int_{\S ^3}x_i(f_{\hat \Phi}-f(p))\, \d s _{g_{\S ^3}}\nonumber\\
			=&\,\al\int_{B(0, \frac{1}{\var} )}x_i(f(\hat\Psi_\var (y))-f(p))\frac{8\, \d y  }{(1+|y|^2)^3}+O(\var ^3)\nonumber\\
			=&\,\al\var \int_{B(0, \frac{1}{\var} )}x_i\sum_{j=1}^3\frac{\pa  f} {\pa  x_j}(p) y_j\frac{16\, \d y  }{(1+|y|^2)^3}\nonumber\\
			&\,
			+\al\var ^2\int_{B(0, \frac{1}{\var} )}x_i\sum_{j,k=1}^3\frac{\pa ^2 f} {\pa  x_j\pa  x_k}(p) y_j y_k\frac{16\, \d y  }{(1+|y|^2)^3}
			+O(\var ^3|\ln \var |)\nonumber
			\\:=&\, I^i+II^i+O(\var ^3|\ln \var |).\label{lem:5.5-5}
		\end{align}

		Since $x_i= \Psi^i(y,0)$ (see in \eqref{PsiR^3}),   symmetry implies that for $i=1,2,3$,
		\[
		I^i
		=\,\frac{32}{3}\al\var\frac{\pa  f} {\pa  x_i}(p)\int_{B(0, \frac{1}{\var} )}\frac{|y|^2\, \d y  }{(1+|y|^2)^4}=\,\frac{4\pi^2}{3}\al \var \frac{\pa  f} {\pa  x_i}(p)+O(\var ^4),
		\]
		while   $I^4=0$.
		Similarly, for $i=1,2,3$, we obtain $II^i=0$,
		and for $i=4$,  
		\begin{align*}
			II^4=&\,\al\var ^2\int_{B(0, \frac{1}{\var} )}\frac{1-|y|^2}{1+|y|^2}\sum_{j,k=1}^3\frac{\pa ^2 f} {\pa  x_j\pa  x_k}(p) y_j y_k\frac{16\, \d y  }{(1+|y|^2)^3}\\
			=&\,\al\var ^2\sum_{j,k=1}^3\frac{\pa ^2 f} {\pa  x_j\pa  x_k}(p)\int_{B(0, \frac{1}{\var} )}\frac{1-|y|^2}{1+|y|^2} y_j y_k\frac{16\, \d y  }{(1+|y|^2)^3}\\
			=&\,\al\frac{\var ^2\Delta_{g_{\S^3}} f(p)}{3}\int_{B(0, \frac{1}{\var} )}\frac{16|y|^2(1-|y|^2)\, \d y  }{(1+|y|^2)^4}
			\\=&\,-\frac{8\pi^2}{3}\al \var ^2  \Delta_{g_{\S^3}} f(p)+O(\var ^3).
		\end{align*} 
		Consequently, we obtain
		\be\label{finnal}
		\frac{\al}{3}\int_{\S ^3}\la \nabla_{g_{\S ^3}}x_i, \nabla_{g_{\S ^3}}f _{\hat \Phi} \ra _{g_{\S ^3}}\, \d s _{g_{\S ^3}}=\left\{\begin{aligned}
			&\frac{4\pi^2}{3}\al \var \frac{\pa  f} {\pa  x_i}(p) +O(\var ^3|\ln \var |) &&\text{ if }\,i=1,2,3,
			\\ &-\frac{8\pi^2}{3}\al \var ^2  \Delta_{g_{\S^3}} f(p) +O(\var ^3|\ln \var |) &&\text{ if }\,i=4.
		\end{aligned}\right.
		\ee

		Combining \eqref{lem:5.5-3} with \eqref{lem:5.5-6.14} and \eqref{finnal}, we conclude that for $i=1,2,3$,
		\be\label{bi}
		b^i= \frac{4\pi^2}{3}\al \var \frac{\pa  f} {\pa  x_i}(p) +E_i,
		\ee
		where the error term $E_i$ satisfies
		\begin{align*}
			E_i=&\,O(\|\hat v\|_{H^3(\S ^3,g_{\S ^3})}^2+\|f_ {\hat \Phi}-f(p)\|_{L^2(\S ^3,g_{\S ^3})}+\|\al  f_{\hat \Phi}-T_h\|^2_{L^2(\S ^3,\hat{h})}))+O(\var ^3|\ln \var |)
			\\=&\,O(F_2+\|f_ {\hat \Phi}-f(p)\|_{L^2(\S ^3,g_{\S ^3})}^2)+O(\var ^3|\ln \var |)\\=&\,O(|b|^2)+O(\var^2),
		\end{align*}
		in view of  \eqref{lem:5.5-3},  \eqref{finnal},  H\"older inequality,  and Lemmas \ref{lem:5.2}, \ref{lem:5.3} and \ref{lem:5.4}.  Similarly,  for $i=4$, 
		\be\label{b4}
		b^4=-\frac{8\pi^2}{3}\al \var ^2  \Delta_{g_{\S^3}} f(p) +E_4,
		\ee 
		where the error term $E_4$ satisfies
		\begin{align*}
			E_4=&\,O(\|\hat v\|_{H^3(\S ^3,g_{\S ^3})}^2+\|f_ {\hat \Phi}-f(p)\|_{L^2(\S ^3,g_{\S ^3})}+\|\al  f_{\hat \Phi}-T_h\|^2_{L^2(\S ^3,\hat{h})}))+O(\var ^3|\ln \var |)
			\\=&\,O(F_2+\|f_ {\hat \Phi}-f(p)\|_{L^2(\S ^3,g_{\S ^3})}^2)+O(\var ^3|\ln \var |)\\=&\,O(|b|^2)+O(\var ^2|\nabla_{g_{\S^3}} f(p)|^2)+O(\var ^3|\ln \var |),
		\end{align*}
		in view of  \eqref{lem:5.5-3},  \eqref{lem:5.5-6.14}, \eqref{finnal},  H\"older inequality,  and Lemmas \ref{lem:5.2} and \ref{lem:5.4}. From \eqref{42}, \eqref{69}, \eqref{bi} and \eqref{b4} we  obtain
		$|b|^2=O(\var^2|\nabla_{\S^3} f(p)|^2)+O(\var^4|\ln \var|^2)$,
		which also implies that 
		\be\label{Ei}
		E_i=\left\{\begin{aligned}
			&O(\var^2)&&\text{ if }\, i=1,2,3,\\
			&O(\var^2|\nabla_{\S^3} f(p)|^2)+O(\var^3|\ln \var|)&&\text{ if }\, i=4.
		\end{aligned}\right.
		\ee
		
		Finally, the proof is completed  from  \eqref{bi} and \eqref{Ei}. 
	\end{proof}

	\begin{lem}\label{lem:5.6}
		As $t\to+\infty$, the following asymptotic expansion holds:
		\[
		b=\frac{\pi^2}{2}\Big(\frac{\d q_1}{\d t},\frac{\d q_2}{\d t},\frac{\d q_3}{\d t},-\frac{\d r}{\d t}\Big)
		+O(\var ^2|\nabla_{g_{\S^3}} f(p)|^2)+O(\var ^3).
		\]
	\end{lem}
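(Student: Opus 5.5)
The approach follows \cite[Lemma 4.5]{MS2006} (see also \cite{H2012}). The plan is to express $b$ through the vector $X$ in \eqref{X_defn} and then invoke \eqref{78}--\eqref{79}. Fix $t_0$ and work at $t=t_0$, where $\hat\Phi_{t_0}(t_0)=\mathrm{id}$ and $(q,r)=(0,1)$.

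\textbf{Step 1: relate $b$ to $\hat\xi$.} Start from identity \eqref{34}:
\[
\int_{\S^3}\hat\xi\,\d s_{\hat h}=3\int_{\S^3}x\,\pa_t\hat w\circ\hat\Phi\,\d s_{\hat h}=3\int_{\S^3}x(\al f_{\hat\Phi}-T_h)\,\d s_{\hat h}=3b,
\]
where the second equality uses \eqref{flow3} and \eqref{QHT-conformal}. Next replace $\d s_{\hat h}$ by $\d s_{g_{\S^3}}$ on the left:
\[
\Big|\int_{\S^3}\hat\xi\,(e^{3\hat v}-1)\,\d s_{g_{\S^3}}\Big|\le C\|\hat\xi\|_{L^\infty}\|\hat v\|_{L^2}.
\]
This gives
\[
3b=X+O\big(\|\hat\xi\|_{L^\infty}\|\hat v\|_{H^3}\big).
\]

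\textbf{Step 2: bound the error.} Lemma \ref{35} gives $\|\hat\xi\|_{L^\infty}=O(F_2^{1/2})$. Lemma \ref{lem:5.4} gives $\|\hat v\|_{H^3}=O(F_2^{1/2}+\|f_{\hat\Phi}-f(p)\|_{L^2})$. Combined, the error is $O(F_2+\|f_{\hat\Phi}-f(p)\|_{L^2}^2)$. Lemma \ref{lem:5.2} gives $F_2=O(|b|^2)$. Estimate \eqref{lem:5.5-6.14} gives $\|f_{\hat\Phi}-f(p)\|^2_{L^2}=O(\var^2|\nabla f(p)|^2)+O(\var^3)$. Lemma \ref{lem:5.5} gives $|b|^2=O(\var^2|\nabla f(p)|^2)+O(\var^4|\ln\var|^2)$. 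So the total error is
\[
O(\var^2|\nabla_{g_{\S^3}}f(p)|^2)+O(\var^3).
\]

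\textbf{Step 3: insert the explicit $X$.} Formulas \eqref{78}--\eqref{79} give $X=\frac{3\pi^2}{2}(\dot q_1,\dot q_2,\dot q_3,-\dot r)$. Dividing by $3$ yields the stated expansion $b=\frac{\pi^2}{2}(\dot q_1,\dot q_2,\dot q_3,-\dot r)+\dots$.

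\textbf{Main obstacle.} The delicate point is justifying \eqref{78}--\eqref{79}, together with the coordinate bookkeeping. One must check that at $t=t_0$ the representation $\hat\xi=\sum_i(\dot q_i+y_i\dot r)e_i$ holds exactly, so that $\frac{\d p_i}{\d t}$ in \eqref{78} coincides with $\dot q_i$. One must also check that the odd cross terms integrate to zero against $g_{\S^3}$. In addition, the rotation sending $\hat p(t_0)$ to $\mathcal N$ has to be compatible with the definition of $b$, which is rotation-covariant.

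If the error estimate in Step 2 is too lossy in the regime $|\nabla f(p)|\gg\var$, I would refine it. The first attempt would be to bound $\int\hat\xi(e^{3\hat v}-1)$ by separating the modes of $e^{3\hat v}-1$: \eqref{noemal2} shows that its degree-one part vanishes, while the degree-one part of $\hat\xi$ is controlled by $X$ itself. This would give an extra small factor. I expect this not to be needed, since the stated error already absorbs $\var^2|\nabla f(p)|^2$.
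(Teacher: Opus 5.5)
Your proposal is correct and follows essentially the same route as the paper. You obtain $3b=\int_{\S^3}\hat\xi\,\d s_{\hat h}=X+\int_{\S^3}\hat\xi(e^{3\hat v}-1)\,\d s_{g_{\S^3}}$ from \eqref{34}, bound the remainder by $O(\|\hat v\|_{H^3}\|\hat\xi\|_{L^\infty})=O(F_2+\|f_{\hat\Phi}-f(p)\|_{L^2}^2)$ via Lemmas \ref{35} and \ref{lem:5.4}, convert this to $O(\var^2|\nabla_{g_{\S^3}}f(p)|^2)+O(\var^3)$ using Lemma \ref{lem:5.2}, the $|b|^2$ bound from Lemma \ref{lem:5.5} and \eqref{lem:5.5-6.14}, and conclude with \eqref{78}--\eqref{79}; your explicit routing of $F_2$ through Lemma \ref{lem:5.5} is in fact the right justification for the paper's \eqref{86}, which cites only Lemmas \ref{lem:5.2} and \ref{lem:5.4}.
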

	\begin{proof}
		From Lemmas \ref{lem:5.2} and \ref{lem:5.4},
		we obtain 
		\be \label{86}
		F_2= O(\var ^2|\nabla f(p)|^2)+O(\var ^3).
		\ee 
		By  \eqref{flow3} and \eqref{34}, we obtain
		\[
		3b^i=3\int_{\S ^3} x_i (\al  f_{\hat \Phi}-T_h) \, \d s _{\hat h} 
		=3\int_{\S ^3}x\pa _t \hat w\circ {\hat \Phi} \,\, \d s _{\hat h}=\int_{\S ^3}\hat\xi_i \, \d s _{\hat h}:=X_i+I^i,
		\]
		where $X_i$ is defined as in \eqref{X_defn}  and $I^i:=\int_{\S ^3}\hat\xi_i(e^{3\hat v}-1)\, \d s _{g_{\S ^3}}$
		satisfies
		\begin{align*}
			|I^i|= &\,O(\|\hat v\|_{L^\infty(\S ^3)}\|\hat\xi\|_{L^2(\S ^3,g_{\S ^3})})
			\\= &\, O(\|\hat v\|_{H^3(\S ^3,g_{\S ^3})}\|\hat\xi\|_{L^\infty(\S ^3)})\\
			=&\, O(F_2+\|f_ {\hat\Phi}-f(p)\|^2_{L^2(\S^3,g_{\S^3})})
			\\=&\, O(\var ^2|\nabla f(p)|^2)+O(\var ^3),
		\end{align*}
		in view of  \eqref{lem:5.5-6.14}, \eqref{86}, Lemmas \ref{35} and  \ref{lem:5.3}.
		Combining this with \eqref{78} and \eqref{79} completes the proof.
	\end{proof}

	The concentration point $p(t)$ admits the following asymptotic description.
	\begin{lem}\label{lem:5.7}
		As $t\to+\infty$, the following asymptotic expansions hold:
		\[
		\frac{\d p^i}{\d t}=(2+o(1))\var \frac{\d q^i}{\d t},\quad i=1,2,3,
		\]
		and
		\[
		\frac{\d}{\d t}(1-|p(t)|^2)=(2+o(1))(1-|p(t)|^2)\frac{\d r}{\d t},
		\]
		where   $o(1)\to 0$ as $t\to+\infty $.
	\end{lem}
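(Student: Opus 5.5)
The plan is to compute $p(t)$ explicitly in stereographic coordinates near a fixed time $t_0$ and differentiate. Fix $t_0$ and rotate so that $\hat p(t_0)=\mathcal N$. As in Section \ref{sec:5.3}, write $\hat\Phi(t_0)=\hat\Psi_\var\circ\pi$ and $\hat\Phi(t)\circ\hat\Psi=\hat\Psi_\var\circ\hat\delta_{q(t),r(t)}$ with $(q(t_0),r(t_0))=(0,1)$. Since $\hat\Psi^*g_{\S^3}=\frac{4}{(1+|y|^2)^2}|\d y|^2$, we have, for $t$ near $t_0$,
\[
p(t)=\frac{1}{2\pi^2}\int_{\R^3}\hat\Psi\big(\var(q(t)+r(t)y)\big)\,\frac{8\,\d y}{(1+|y|^2)^3}.
\]
By Proposition \ref{prop:3.7}, $\hat\Phi(t)\to p(t)$ in $L^2$ and $|p(t)|\to1$. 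Hence the scaling parameter satisfies $\var=\var(t_0)\to0$ as $t_0\to+\infty$. All $o(1)$ terms below are with respect to $\var\to0$.

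\textbf{Step 1: $p(t_0)$ as a function of $\var$.} By the oddness of $\hat\Psi^i(\var y)$ in $y$ for $i=1,2,3$, we get $p(t_0)=(0,0,0,p_4(\var))$ with
\[
1-p_4(\var)=\frac{1}{2\pi^2}\int_{\R^3}\frac{2\var^2|y|^2}{1+\var^2|y|^2}\,\frac{8\,\d y}{(1+|y|^2)^3}.
\]
The integrand is dominated by $C|y|^2(1+|y|^2)^{-3}\in L^1(\R^3)$. Dominated convergence therefore gives $1-p_4(\var)=c_0\var^2(1+o(1))$ with $c_0>0$, and also $\var\pa_\var(1-p_4)=(2+o(1))(1-p_4)$. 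Since $1-|p|^2=(1-p_4)(1+p_4)$ and $p_4\to1$, the same relation holds with $1-|p|^2$ in place of $1-p_4$.

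\textbf{Step 2: differentiate at $t=t_0$.} Differentiating under the integral (justified by the uniform bounds on $\d\hat\Psi_\var$ from \eqref{76-1}--\eqref{76}) gives
\[
\frac{\d p}{\d t}\Big|_{t_0}=\frac{1}{2\pi^2}\int_{\R^3}\var\,\d\hat\Psi(\var y)\Big[\frac{\d q}{\d t}+\frac{\d r}{\d t}\,y\Big]\frac{8\,\d y}{(1+|y|^2)^3}.
\]
\emph{The $\frac{\d q}{\d t}$ term.} By \eqref{76-1}--\eqref{76}, the fourth component of $\d\hat\Psi(\var y)[\frac{\d q}{\d t}]$ is odd in $y$, so it integrates to zero. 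The first three components equal $2\frac{\d q_i}{\d t}(1+O(\var^2|y|^2))/(1+\var^2|y|^2)^{2}$ plus odd or lower-order terms. Dominated convergence then yields the contribution $(2+o(1))\var\frac{\d q_i}{\d t}$ to $\frac{\d p^i}{\d t}$.

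\emph{The $\frac{\d r}{\d t}$ term.} This term equals $\frac{\d r}{\d t}\,\var\pa_\var$ applied to $p(t_0)$. Its first three components vanish by oddness, consistent with \eqref{77}. Its fourth component is $-\frac{\d r}{\d t}\,\var\pa_\var(1-p_4)$. By Step 1, this gives
\[
\frac{\d}{\d t}(1-|p|^2)=(1+p_4)\Big(-\frac{\d p_4}{\d t}\Big)+O\Big(|p'|\,|p^{1,2,3}|\Big)=(2+o(1))(1-|p|^2)\frac{\d r}{\d t},
\]
because $p^{1,2,3}(t_0)=0$. Undoing the rotation, both stated expansions follow at each large $t_0$, which is arbitrary.

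\textbf{Main obstacle.} The delicate point is justifying the $\var$-expansions uniformly, that is, showing that the error terms are genuinely $o(1)$ relative to the leading terms $\var$ and $\var^2$. For the radial term, the integrand carries the factor $|y|^2$. I will control its tail by splitting the integral at $|y|=1/\var$. This is the same splitting used in the proof of Lemma \ref{lem:5.5}, and it bounds the tail by $O(\var^3)=o(\var^2)$. The remaining care is checking that the odd-symmetry cancellations are exact rather than approximate.
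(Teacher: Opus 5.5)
Your proposal is correct and takes essentially the paper's route: you differentiate the stereographic representation $p=\frac{1}{2\pi^2}\int_{\R^3}\hat\Psi_\var\circ\hat\delta_{q,r}\,\frac{8\,\d y}{(1+|y|^2)^3}$ at $t_0$, drop the cross terms by oddness, and evaluate the surviving integrals $\dashint_{\S^3}(1+\hat\Phi_4-\hat\Phi_1^2)\,\d s_{g_{\S^3}}$ and $\dashint_{\S^3}(1-\hat\Phi_4^2)\,\d s_{g_{\S^3}}$; the only difference is that you compute these limits explicitly by dominated convergence in $\var$, writing the $\frac{\d r}{\d t}$-term as $\frac{\d r}{\d t}\,\var\pa_\var p$, which justifies the relative estimate $\dashint_{\S^3}(1-\hat\Phi_4^2)\,\d s_{g_{\S^3}}=(1+o(1))(1-|p|^2)$ more carefully than the paper's appeal to Proposition~\ref{prop:3.7}. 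One minor slip: at $t_0$ one has exactly $\frac{\d}{\d t}(1-|p|^2)=-2p_4\frac{\d p_4}{\d t}$, not $-(1+p_4)\frac{\d p_4}{\d t}$, but the discrepancy $(1-p_4)\frac{\d p_4}{\d t}$ is relatively $o(1)$ and does not affect your conclusion.
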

	\begin{proof}
		Since 
		\[
		p=\dashint_{\S ^3}\hat\Phi \,\d s_{g_{\S ^3}}
		=\frac{1}{2\pi^2}\int_{\R ^3}\hat\Psi_\var \circ\hat\delta_{q,r}\frac{8\, \d y  }{(1+|y|^2)^3}.
		\]
		Differentiating this identity (for fixed $\var (t_0)$ and with $q=q(t)$, $r=r(t)$) at $t=t_0$, we obtain
		\[
		\frac{\d p}{\d t}
		=\frac{1}{2\pi^2}\int_{\R ^3}\var \sum_{i=1}^3\frac{\pa \hat\Psi}{\pa  y_i}(\var  y)\Big(\frac{\d q^i}{\d t}+y_i\frac{\d r}{\d t}\Big)\frac{8\, \d y  }{(1+|y|^2)^3}.
		\]
		In particular, using \eqref{76-1}-\eqref{77} and the symmetries of the  map $\hat \Phi(t_0)$, we obtain
		\[
		\frac{\d p^1}{\d t}=\frac{\var }{2\pi^2}\frac{\d q^1}{\d t}\int_{\S ^3}(1+\hat\Phi_4(t_0)-|\hat\Phi_1(t_0)|^2)\, \d s _{g_{\S ^3}}
		=(2+o(1))\var \frac{\d q^1}{\d t}
		\]
		by Proposition  \ref{prop:3.7}. Similarly, we can obtain the estimates for $\frac{\d p^2}{\d t}$ and $\frac{\d p^3}{\d t}$.
		Finally, by \eqref{76-1}-\eqref{77} and Proposition  \ref{prop:3.7}, we have
		\begin{align*}
			\frac{\d}{\d t}|p|^2=&\,2p^4\frac{\d p^4}{\d t}=(2+o(1))\frac{\d p^4}{\d t}\\
			=&\,-\frac{(2+o(1))}{2\pi^2}\frac{\d r}{\d t}\int_{\S ^3}(1-|\hat\Phi_4(t_0)|^2)\, \d s _{g_{\S ^3}}
			\\=&\,-(2+o(1))(1-|p|^2)\frac{\d r}{\d t}.
		\end{align*}
		This completes the proof.
	\end{proof}

	\begin{lem}\label{lem:5.8}
		As $t\to+\infty$, the following asymptotic expansion holds:
		\[1-|p(t)|^2=(12+o(1))\var ^2,\]
		where $o(1)\to 0$ as $t\to+\infty $.
	\end{lem}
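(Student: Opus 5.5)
The plan is a direct computation in the normalized coordinates of Section \ref{sec:5.3}. Fix $t_0$ and, after the rotation sending $\hat p(t_0)$ to the north pole $\mathcal N$, write $\hat\Phi(t_0)=\hat\Psi_\var\circ\pi$ as there. Since $|p|$ is rotation invariant, it suffices to compute $p(t_0)$ in these coordinates. Pulling back by stereographic projection gives
\[
p=\dashint_{\S^3}\hat\Phi\,\d s_{g_{\S^3}}=\frac{1}{2\pi^2}\int_{\R^3}\hat\Psi(\var y)\,\frac{8\,\d y}{(1+|y|^2)^3}.
\]
By \eqref{PsiR^3}, the first three components of $\hat\Psi(\var y)$ are odd in $y$. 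Hence $p=(0,0,0,p_4)$ with
\[
1-p_4=\frac{1}{2\pi^2}\int_{\R^3}\frac{2\var^2|y|^2}{1+\var^2|y|^2}\,\frac{8\,\d y}{(1+|y|^2)^3}.
\]

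First I show that $\var=\var(t_0)\to0$ as $t_0\to+\infty$. The displayed formula shows that $p_4$ is a continuous, strictly decreasing function of $\var\in(0,\infty)$, with $p_4\to1$ only as $\var\to0$. By Proposition \ref{prop:3.7}, $\hat\Phi(t)\to p(t)$ in $L^2(\S^3,g_{\S^3})$. Since $|\hat\Phi|\equiv1$, this gives $\big||p(t)|-1\big|\le C\|\hat\Phi(t)-p(t)\|_{L^2}\to0$, so $|p(t)|\to1$. Therefore $p_4\to1$, and hence $\var\to0$.

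Next I extract the leading term. The integrand satisfies $\frac{|y|^2}{1+\var^2|y|^2}\le|y|^2$, and $|y|^2(1+|y|^2)^{-3}\in L^1(\R^3)$. Dominated convergence therefore gives
\[
1-p_4=\frac{8\var^2}{\pi^2}\Big(\int_{\R^3}\frac{|y|^2\,\d y}{(1+|y|^2)^3}+o(1)\Big).
\]
In polar coordinates, $\int_{\R^3}|y|^2(1+|y|^2)^{-3}\d y=4\pi\int_0^\infty r^4(1+r^2)^{-3}\d r=4\pi\cdot\frac{3\pi}{16}=\frac{3\pi^2}{4}$. Hence $1-p_4=(6+o(1))\var^2$. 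Finally, $1-|p|^2=(1-p_4)(1+p_4)$ and $1+p_4=2+o(1)$, which yields $1-|p(t)|^2=(12+o(1))\var^2$.

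The only genuinely delicate point is making sure the $o(1)$ is uniform in $t_0$, i.e.\ that it depends only on $\var(t_0)$. This holds because the final expression is an explicit function of $\var$ alone, and $\var(t_0)\to0$ by the first step. One could also sharpen the error to $O(\var^2|\ln\var|)$ relative to the main term by splitting the integral at $|y|=1/\var$, as in \eqref{lem:5.5-6.14}, but the statement does not require this.
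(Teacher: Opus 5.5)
Your proof is correct and takes essentially the route the paper intends: the paper only defers to Lemma 5.8 of Malchiodi--Struwe, which is this same explicit stereographic computation of $p=\dashint_{\S^3}\hat\Psi_\var\circ\pi\,\d s_{g_{\S^3}}$, and your constant checks out, since $\int_{\R^3}|y|^2(1+|y|^2)^{-3}\,\d y=\frac{3\pi^2}{4}$ gives $1-p_4=(6+o(1))\var^2$. Your closing aside is wrong but harmless for the lemma: splitting at $|y|=1/\var$ produces no logarithm in three dimensions, and in fact $1-|p|^2=12\var^2-32\var^3+O(\var^4)$, so the $o(1)$ term is genuinely of order $\var$.
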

	\begin{proof}
		The proof is similar to that  Lemma 5.8 in \cite{MS2006}, and we omit the details here.
	\end{proof}

	From the above lemmas, we are now in position to give a neat characterization of the limit of the shadow flow $p$. Below, we denote by $\frac{\d p^i}{\d t}$ the component of $\frac{\d p}{\d t}$ which is tangential to $\S^3$ at $\frac{p}{|p|}$.
	\begin{prop}\label{prop4.9}
		(i) The following estimates hold as $t\to+\infty $:
		\begin{gather*}
			\frac{\d p^i}{\d t}=\frac{32}{3}\al \var ^2\Big(\frac{\pa  f} {\pa  x_i}(p)+o(1)\Big),\quad i=1,2,3,\\\frac{\d}{\d t}(1-|p|^2)=384\al \var ^4(\Delta_{g_{\S ^3}} f(p)+O(1)|\nabla_{g_{\S ^3}} f(p)|^2+o(1)),
		\end{gather*}
		and  
		\[
		1-|p(t)|^2=(12+o(1))\var ^2,
		\]
		where $o(1)\to 0$ as $t\to +\infty$.\\
		(ii) As $t\to+\infty $, the metrics $\hat g(t)$ concentrate at the  critical points $P$ of $f$ satisfying $\Delta_{g_{\S ^3}} f(P)\leq 0$.
	\end{prop}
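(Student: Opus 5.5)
For part (i) I will chain Lemmas \ref{lem:5.5}, \ref{lem:5.6}, \ref{lem:5.7} and \ref{lem:5.8}, evaluating everything at a fixed time $t_0$ in the normalization $\hat\Phi(t_0)=\hat\Psi_\var\circ\pi$, where $\hat p(t_0)$ is the north pole. The third estimate, $1-|p|^2=(12+o(1))\var^2$, is exactly Lemma \ref{lem:5.8}.

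\textbf{Tangential components.} Equate the two expansions of $b^i$, $i=1,2,3$, from Lemmas \ref{lem:5.5} and \ref{lem:5.6}. This gives
\[
\frac{\pi^2}{2}\frac{\d q_i}{\d t}=\frac{4\pi^2}{3}\al\var\Big(\frac{\pa f}{\pa x_i}(p)+O(\var)\Big)+O(\var^2|\nabla f(p)|^2)+O(\var^3).
\]
Hence $\frac{\d q_i}{\d t}=\frac{8}{3}\al\var(\pa_i f(p)+o(1))$, since $\var\to0$ by Lemma \ref{lem:5.8} and Proposition \ref{prop:3.7}, and $|\nabla f|$ is bounded. Lemma \ref{lem:5.7} then gives $\frac{\d p^i}{\d t}=(2+o(1))\var\frac{\d q_i}{\d t}$. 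One caveat: the product of the constants is $\frac{16}{3}\al\var^2$, not $\frac{32}{3}$. I will recheck the normalizations, in particular the factor $\d\hat\Psi_\var(0)=2\var\,\mathrm{id}$ and the identification of the tangential derivative $\frac{\pa f}{\pa x_i}$ with $\frac{\pa}{\pa y_i}$ up to the factor $2$. That bookkeeping should account for the stated constant, and it does not affect the sign structure.

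\textbf{Radial component.} Equate $b^4$ from the two lemmas:
\[
-\frac{\pi^2}{2}\frac{\d r}{\d t}=-\frac{8\pi^2}{3}\al\var^2\big(\Delta f(p)+O(|\nabla f(p)|^2)+O(\var|\ln\var|)\big)+O(\var^2|\nabla f|^2)+O(\var^3).
\]
So $\frac{\d r}{\d t}=\frac{16}{3}\al\var^2(\Delta f(p)+O(|\nabla f(p)|^2)+o(1))$. Inserting this into the second identity of Lemma \ref{lem:5.7}, together with Lemma \ref{lem:5.8}, yields
\[
\frac{\d}{\d t}(1-|p|^2)=2\cdot 12\cdot\frac{16}{3}\al\var^4(\cdots)=128\al\var^4(\cdots).
\]
Again the numerical constant ($384$ versus $128$) depends on the same normalization conventions, which I will track carefully. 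The qualitative form, a positive constant times $\al\var^4(\Delta f+O(|\nabla f|^2)+o(1))$, is what matters.

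\textbf{Part (ii).} By Proposition \ref{prop:3.7} the metrics concentrate at the limit points of $\hat p(t)$, and $\var(t)\to0$. Two facts will be used:
\[
\int^\infty \var^2|\nabla f(p)|^2\,\d t<\infty,
\]
which follows from $F_2=(1+o(1))|B|^2\gtrsim\var^2|\nabla f(p)|^2$ (Lemmas \ref{lem:5.2}, \ref{lem:5.5}) and \eqref{21}; and the fact that $\var^2\sim(1-|p|^2)/12$ is not integrable in a suitable sense along the flow. The second fact is the main obstacle; I will follow Malchiodi--Struwe. If $\var^2$ were integrable, then by (i) the path $p(t)$ has finite length, so $\hat p(t)\to P$. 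Otherwise the gradient-like motion forces $|\nabla f(\hat p)|\to0$ along a sequence, and by nondegeneracy this localizes $\hat p$ near a critical point $P$ eventually.

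In either case I will then use the radial equation. Since $1-|p|^2\to0$, its derivative cannot stay bounded below by $c\var^4>0$: integrating $\frac{\d}{\d t}\var^2\gtrsim\var^4$ would make $\var$ increase. Near $P$ the $|\nabla f|^2$ term is $o(1)$, so we must have $\Delta f(P)\le0$.

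The delicate points are showing that $\hat p$ settles at a single critical point, using the isolation of nondegenerate critical points and the finite total tangential movement, and excluding oscillation. I will handle these by comparing with the ODE $\dot p=c\var^2\nabla f$ and arguing exactly as in \cite[Prop.~5.9]{MS2006}.
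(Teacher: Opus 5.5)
\textbf{Gap in part (ii).} Your dichotomy leaves the branch $\int^\infty \varepsilon^2\,\mathrm{d}t<\infty$ unresolved. In that branch $p(t)$ has finite length and converges to some $P$, but nothing forces $P$ to be a critical point of $f$. Your last step, ``near $P$ the $|\nabla f|^2$ term is $o(1)$'', therefore has no justification there: at a regular point the $O(|\nabla f|^2)$ term in the radial equation has no sign. So this branch must be excluded.

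The missing idea is that part (i) excludes it in one line, which is exactly how the paper argues. By \eqref{42}, $\alpha\le 2/m_f$. Since $\Delta f$ and $\nabla f$ are bounded, (i) together with Lemma~\ref{lem:5.8} gives
\[
\Big|\frac{\mathrm{d}}{\mathrm{d}t}\big(1-|p|^2\big)\Big|\le C\varepsilon^4\le C\big(1-|p|^2\big)^2 .
\]
Hence $\frac{\mathrm{d}}{\mathrm{d}t}(1-|p|^2)^{-1}\le C$, so $1-|p(t)|^2\ge C_0/(1+t)$ and $\varepsilon^2\ge C_1/(1+t)$. Thus $\int^\infty\varepsilon^2\,\mathrm{d}t=\infty$ unconditionally.

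With this in hand, your other branch goes through. Where $|\nabla f(p)|\ge\delta$, Lemma~\ref{lem:5.5} gives $|b|\gtrsim \varepsilon\delta$ once $\varepsilon$ is small, so $\varepsilon^2\delta^2\lesssim F_2$ by Lemma~\ref{lem:5.2}. Then \eqref{21} yields $\int_{\{|\nabla f(p)|\ge\delta\}}\varepsilon^2\,\mathrm{d}t<\infty$. So the shadow point travels only finite length in $\{|\nabla f|\ge\delta\}$, yet cannot stay there forever, and it settles at a single nondegenerate critical point $P$. Your monotonicity argument then rules out $\Delta f(P)>0$: there $1-|p|^2$ would eventually increase. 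This step is slightly simpler than the paper's time rescaling and reaches the same contradiction.

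Note that the literal bound $F_2\gtrsim\varepsilon^2|\nabla f(p)|^2$ you invoke does not follow from Lemma~\ref{lem:5.5} near critical points, because of the $O(\varepsilon^2)$ error in $b^i$. The restricted version on $\{|\nabla f(p)|\ge\delta\}$ above is what you actually need.

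\textbf{Part (i).} Here you follow the paper, which simply cites Lemmas~\ref{lem:5.5}--\ref{lem:5.8}. Your arithmetic is correct: chaining those lemmas as stated gives $\frac{\mathrm{d}p^i}{\mathrm{d}t}=\frac{16}{3}\alpha\varepsilon^2(\partial_i f(p)+o(1))$ and $\frac{\mathrm{d}}{\mathrm{d}t}(1-|p|^2)=128\,\alpha\varepsilon^4(\cdots)$. Lemma~\ref{lem:5.5} already absorbs the factor $\mathrm{d}\hat\Psi_\varepsilon(0)=2\varepsilon\,\mathrm{id}$, so no renormalization will recover the displayed $\frac{32}{3}$ and $384$; do not spend effort reconciling them. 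Only the positivity of these constants and $1-|p|^2=(12+o(1))\varepsilon^2$ are used in (ii).
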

	\begin{proof}
		(i) The first statement follows from  Lemmas \ref{lem:5.5}--\ref{lem:5.8}.

		(ii) From (i) and \eqref{42}, we find 
		\[
		\Big|\frac{\d}{\d t}(1-|p|^2)\Big|\leq C(1-|p|^2)^2,
		\]
		which yields
		\[
		1-|p(t)|^2\geq\frac{C_0}{1+t}
		\]
		for some constant $C_0>0$. Combining this with Lemma \ref{lem:5.8}, we have
		\be \label{87}
		\var ^2\geq\frac{C_1}{t+1}\quad \mbox{ for all }\,t\geq 0,
		\ee 
		where   $C_1>0$ is a uniform constant.
		This together with (i) implied that 
		\be \label{88}
		\frac{\d f(p)}{\d  t}=\nabla f(p)\frac{\d p}{\d t}\geq\frac{C_2}{t+1}(|\nabla_{\S^3} f(p)|^2+o(1)),
		\ee 
		for some constant $C_2>0$, where $o(1)\to 0$ as $t\to +\infty$.  Since the integral of $(1+t)^{-1}$  over the interval $(0,+\infty)$ is divergent, the flow $\{p(t)\}_{t\geq 0}$ must accumulate at
		a critical point of $f$, denoted by $P$.  To see limit
		point of $p(t)$ must be of $\Delta_{g_{\S ^3}} f(P)\leq 0$, we assume, on the contrary, $\Delta_{g_{\S ^3}} f(P)>0$. We rescale time $t$ as $s(t)$ by solving the equation
		\be \label{91}
		\left\{\begin{aligned}
			&\frac{\d s }{\d t}=\min\Big\{\frac{1}{2},\var ^2(t,w_0)\Big\},&& \\ &s(0)=0.&&
		\end{aligned}\right.
		\ee 
		By \eqref{87}, it yields $s(t) \to + \infty$ as $t \to +\infty$. Recalling (i) and Lemma \ref{lem:5.7}, one has
		\[
		\frac{\d}{\d s}(1-|p(t(s))|^2)=32 \al(1-|p(t(s))|^2)(\Delta_{\S^3} f(p(t(s)))+o(1)) .
		\]
		Then there exists a uniform constant $C_3>0$ depending on $\min _{\nabla_{g_{\S^3}} f(a)=0}|\Delta_{\S^3} f(a)|$ and $M_f$, such that
		\[
		\frac{\d}{\d s} \log (1-|p(t(s))|^2) \geq C_3,
		\]
		which obviously contradicts the fact that $1-|p|^2 \to 0$ as $t \to + \infty$. 
		
		Therefore the shadow flow $\{p(t)\}_{t\geq 0}$ converges to a unique point $ P\in \S^3$.
	\end{proof}

	The energy functional associated with $w(t)$ satisfies the following asymptotic behavior.
	\begin{lem}\label{lem:5.10}
		As $t\to+\infty $, we have
		\[
		E_f[w(t)]\to -\frac{16\pi^2}{3}\log f(Q),
		\]
		where $Q:=\lim_{t\to+\infty }p(t)$
		is the unique limit point of the shadow flow $\{p(t)\}$
		associated with $\{w(t)\}$.
	\end{lem}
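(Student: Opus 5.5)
The plan is to use the conformal invariance of $E$ recorded in \eqref{sameenergy_vw}, and to split $E_f[w(t)]$ into a quadratic part and a logarithmic part:
\[
E_f[w(t)] = E[v(t)] - \frac{16\pi^2}{3}\log\Big(\dashint_{\S^3} f e^{3\hat w}\,\d s_{g_{\S^3}}\Big).
\]
Each part will be shown to converge separately, using the asymptotics of the normalized flow from Proposition \ref{prop:3.7} together with the convergence of the shadow flow from Proposition \ref{prop4.9}.

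\textbf{The quadratic part.} By \eqref{sameenergy_vw} we have $E[w(t)]=E[v(t)]$. Proposition \ref{prop:3.7} gives $v(t)\to 0$ in $H^4(\B^4,g_{\B^4})$, so $\hat v\to 0$ in $H^{3/2}(\S^3)$ and in $L^1(\S^3)$. From the representation \eqref{P43-1},
\[
E[v]=\P^{4,3}_{g_{\B^4}}(v,v)+8\int_{\S^3}\hat v\,\d s_{g_{\S^3}},
\]
and both terms tend to $0$ by continuity. Hence $E[w(t)]\to 0$.

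\textbf{The logarithmic part.} Since $\vol(\S^3,\hat g)=2\pi^2$, the integral equals $\dashint_{\S^3} f\,\d s_{\hat g}$. Pulling back by $\hat\Phi$ turns this into $\dashint_{\S^3} f_{\hat\Phi}\,\d s_{\hat h}$. Write
\[
\dashint_{\S^3} f_{\hat\Phi}\,\d s_{\hat h} = f(p(t)) + \dashint_{\S^3}\big(f_{\hat\Phi}-f(p)\big)\,\d s_{\hat h}.
\]
By H\"older's inequality and \eqref{26}, the remainder is bounded by $C\|f_{\hat\Phi}-f(p)\|_{L^2(\S^3,\hat h)}$, which tends to $0$ by Proposition \ref{prop:3.7}. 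Here $f(p)$ means the extension $f(p/|p|)$, which is well defined since $|p|\to 1$ by Lemma \ref{lem:5.8}.

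\textbf{Convergence of $p(t)$.} By Proposition \ref{prop4.9}(ii), $p(t)$ converges to a single point $Q\in\S^3$. Continuity of $f$ then gives $f(p(t))\to f(Q)$. Since $f>0$, the logarithm passes to the limit, and
\[
E_f[w(t)]\to 0-\frac{16\pi^2}{3}\log f(Q).
\]

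\textbf{Main obstacle.} The main delicate point is uniqueness of the limit $Q$. Proposition \ref{prop4.9} only shows directly that $p(t)$ accumulates at critical points of $f$. If uniqueness is not fully available, I would argue as follows. By Lemma \ref{lem:2.1}, $E_f[w(t)]$ is monotone and bounded below (Corollary \ref{cor:E_f w_upperbdd}), so it has a limit; the computation above shows that along any subsequence with $p(t_k)\to P$ this limit equals $-\frac{16\pi^2}{3}\log f(P)$. Thus $f$ takes the same value at every accumulation point. The accumulation set is connected, and the critical points of $f$ are nondegenerate and hence isolated, so $p(t)$ has a unique limit $Q$.
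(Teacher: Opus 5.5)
Your proof is correct and is the paper's argument written out in more detail: $E[w(t)]=E[v(t)]\to 0$ by \eqref{sameenergy_vw} and Proposition \ref{prop:3.7}, and after pulling back by $\hat\Phi(t)$ the logarithmic term becomes $\dashint_{\S^3} f_{\hat\Phi}\,\d s_{\hat h}$, which tends to $f(Q)$ by Proposition \ref{prop:3.7} and the convergence $p(t)\to Q$ from Proposition \ref{prop4.9}(ii). Your fallback for uniqueness is not needed here, but as stated it is incomplete: what would make it work is knowing that every accumulation point of $p(t)$ is a critical point (then connectedness and isolatedness alone give a single point, and the equal-$f$-value observation is superfluous), whereas the argument behind Proposition \ref{prop4.9} directly gives only that some accumulation point is critical.
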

	\begin{proof}
		By \eqref{sameenergy_vw} and Proposition  \ref{prop:3.7}, we have
		$E[w(t)]=E[v(t)]\to 0$ as $t\to+\infty $. On the other hand, using the change of variables induced by $\hat\Phi(t)$, we compute
		\[
		\dashint_{\S ^3}f  e^{3\hat w(t)}\, \d s _{g_{\S ^3}}
		=\dashint_{\S ^3}f _{\hat\Phi} e^{3\hat v(t)}\, \d s _{g_{\S ^3}}\to f(Q)
		,
		\]
		which completes the proof.
	\end{proof}

	\section{Proof of the main results}\label{sec:7} 
	Throughout this section, we set $C^\infty_*:=\{w\in C^\infty(\B ^4): g=e^{2w}g_{\B ^4}\in[g_{\B ^4}]_0\}$.
	Following the approach of  Chang-Yang \cite[p.237]{CY1987}, for each  $(p,\var)\in\S ^3\times \R^+$, we introduce  stereographic coordinates 
	with the point $-p$ sent to  infinity (so that $p$ becomes the north pole). In these coordinates, we may define   $\hat \Phi_{p,\var }:=\Psi_\var \circ \pi$, as recalled in   Section \ref{sec:6}.
	By a similar argument for , $\hat\Phi_{p,\var }\rightharpoonup \hat\Phi_{p,0}\equiv p$
	weakly in $H^3(\S^3,g_{\S^3})$ as $\var \to 0^+$.
	Moreover, the conformal maps satisfy  $\hat\Phi_{p,\var }=\hat\Phi_{p,\var ^{-1}}^{-1}=\hat\Phi_{-p,\var ^{-1}}$
	for all $(p,\var)\in\S ^3\times \R^+$. This allows us to define a map
	\begin{align*}
		j:	\S ^3\times \R^+&\longrightarrow C^\infty_*,\\(p,\var )&\longmapsto w_{p,\var },
	\end{align*}
	with the  property   (cf. \cite{C1998}) that
	\[
	\d s _{\hat g_{p,\var }}\rightharpoonup 2\pi^2\delta_p\quad \text{ as }\, \var \to 0^+,
	\]
	in the weak sense of measures. Here the associated metric is given by 
	$g_{p,\var }=e^{2w_{p,\var }}g_{\B ^4}= \Phi_{p,\var }^* g_{\B ^4}$,   and the boundary conformal factor is   \[
	\hat w_{p, \var }=-\frac{1}{3}\log\det(\d \hat\Phi_{p,\var }).
	\]
	Since 
	$\hat \Phi_{p,1}=\operatorname{id}$, it follows that  $\hat j(p,1):=j(p, 1)|_{\S ^3}=0$ for all $p\in\S ^3$.

	Let $w_0\in C^\infty_*$ be given. For $t\geq 0$,
	let $w=w(t,u_0)$ denote the solution of the flow \eqref{flow3}
	at the time $t$ with initial condition $w|_{t=0}=w_0$.
	Let $\Phi(t,w_0)$ be the corresponding family of normalized conformal diffeomorphisms such that the normalization condition \eqref{23} holds for the pull-back metric
	$h=h(t,w_0)=e^{2v}g_{\B ^4}=\Phi^*g$,
	where  $g=g(t,w_0)=e^{2w}g_{\B ^4}$,
	$\Phi=\Phi(t,w_0)$,
	and  $v=v(t,w_0)$ is chosen accordingly.
	Finally, define 
	\[
	p=p(t,w_0):=\dashint_{\S ^3}\hat\Phi(t,w_0)\, \d s _{g_{\S ^3}}
	\]
	to be the approximate center of mass of the boundary metric $\hat g(t,w_0)$.
	In particular, up to a rotation, whenever $p\neq 0$
	we may write $\hat\Phi=\hat\Phi_{p,\var }$ for a uniquely  determined  parameter $\var =\var (t,w_0)\in (0,1)$.
	Moreover, the flow depends continuously on the initial datum 
	$w_0$ in any  $C^k$-topology (and hence in the smooth topology).

	We reparametrize the flow $w(t,w_0)$ by introducing a rescaled time variable $s=s(t)$ solving \eqref{91}.
	Note that \eqref{87} implies that $s(t)\to+\infty$ as $t\to+\infty $.
	For $s\in [0,+\infty)$, we then define
	$U(s,w_0):=U(t(s),w_0)$, $P(s,w_0):=p(t(s),w_0)$.
	In view of Proposition \ref{prop4.9}, for $\var <\frac{1}{2}$,
	the rescaled flow satisfies (in stereographic coordinates)
	\[
	\frac{\d P^i}{\d s }=\frac{32}{3}\al \var ^2\Big(\frac{\pa  f} {\pa  x_i}(P)+o(1)\Big),\quad i=1,2,3,
	\]
	and
	\[
	\frac{\d}{\d s }(1-|P|^2)=384\al \var ^4(\Delta_{g_{\S ^3}} f(P)+O(1)|\nabla_{g_{\S ^3}} f(P)|^2+o(1)),
	\]
	where $o(1)\to 0$ as $\var \to 0$.
	In what follows, we again denote the rescaled time variable by $t$, thereby reserving the letter $s$ for other purposes.

	For each $\beta\in\R $, we define the sublevel set of the functional $E_f$ by
	\be\label{sublevelset}
	E_f^{\beta}:=\{u\in C^\infty_*: \,E_f[u]\leq\beta\}.
	\ee 
	To proceed with the proof of Theorem \ref{thm:main}, we label all critical points $p_1,\ldots, p_N$
	of $f$  such that  $f(p_i)\leq f(p_j)$ for $1\leq i\leq j\leq N$. For each $1\leq i\leq N$, we set \[\beta_i:= -\frac{16\pi^2}{3}\log f(p_i)=\lim_{s\to 0}E_f[w_{p_i,s}].\] For notational convenience, we assume that the critical values $f(p_i)$, $1\leq i\leq N$, are all distinct. Consequently, there exists a constant
	$\theta_0>0$ such that
	$\beta_i-\theta_0>\beta_{i+1}$ for all $i$.

	As in \cite{MS2006,XZ2016,H2012,CX2011,S2005}, in order to establish the existence of conformal metrics solving problem \eqref{maineq}, it is essential to prove the following key topological statement.
	\begin{prop}\label{prop:5.11}
		(i) For any $\beta_0>\beta_1$, the set $E_f^{\beta_0}$ is contractible.\\
		(ii) For any $\theta\in (0,\theta_0]$ and for each $i$, the set $E_f^{\beta_i-\theta}$ is homotopy equivalent to the set
		$E_f^{\beta_{i+1}+\theta}$.\\
		(iii) For each critical point $p_i$ of $f$ satisfying $\Delta_{g_{\S ^3}} f(p_i)>0$,
		the sets $E_f^{\beta_i+\theta_0}$ and $E_f^{\beta_i-\theta_0}$ are homotopy equivalent.\\
		(iv) For each critical point $p_i$ of $f$ satisfying $\Delta_{g_{\S ^3}} f(p_i)<0$,
		the set $E_f^{\beta_i+\theta_0}$ is homotopy equivalent to the space obtained from $E_f^{\beta_i-\theta_0}$ by attaching a sphere of dimension $3-\morse(f,p_i)$.
	\end{prop}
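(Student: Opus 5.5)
The plan follows Malchiodi--Struwe \cite[Proposition 6.1]{MS2006} (see also \cite{H2012,CX2011}). The flow \eqref{flow3} acts as a deformation of $C^\infty_*$, and its continuous dependence on $w_0$ is what makes this work. By Lemma \ref{lem:2.1}, $E_f$ is non-increasing along the flow, so every sublevel set $E_f^\beta$ is invariant under the flow. Since we are assuming that \eqref{maineq} has no solution, Lemma \ref{lem:5.10} and Proposition \ref{prop4.9} apply to every trajectory. Thus $E_f[w(t,w_0)]\to\beta_i$ for the critical point $p_i=\lim p(t,w_0)$, and this critical point satisfies $\Delta_{g_{\S^3}}f(p_i)\le 0$; under our hypothesis the inequality is strict.

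For (i), take $\beta_0>\beta_1$, where $\beta_1$ is the largest of the limit values. I would deform $E_f^{\beta_0}$ onto a point by flowing for all time and then using the asymptotic description. By Proposition \ref{prop:3.7}, for large $t$ the normalized $v(t)$ is close to $0$, and $\hat\Phi(t)=\hat\Phi_{p,\var}$ with $(p,\var)$ depending continuously on $w_0$. In this way the flow retracts $E_f^{\beta_0}$, up to homotopy, onto a neighborhood of $j(\S^3\times(0,1))$ in $C^\infty_*$. This set is the cone over $\S^3$: letting $\var\to 1$ sends everything to $j(\cdot,1)=0$. The energy stays below $\beta_0$ along such paths, because $E_f[w_{p,\var}]\to-\frac{16\pi^2}{3}\log f(p)\le\beta_1<\beta_0$ as $\var\to0$, and $E_f$ is continuous. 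Hence $E_f^{\beta_0}$ is contractible.

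For (ii) and (iii), I would use the entrance time. For $w_0\in E_f^{\beta_i-\theta}$, let $T(w_0)$ be the first time with $E_f[w(t,w_0)]\le\beta_{i+1}+\theta$. This time is finite, because the only possible limit levels are the $\beta_j$ and none lies in $(\beta_{i+1}+\theta,\beta_i-\theta]$. It is continuous in $w_0$ by transversality: at the entrance time $\frac{\d}{\d t}E_f=-4F_2<0$, since otherwise $w$ would solve the problem. The map $w_0\mapsto w(T(w_0),w_0)$ is then a deformation retraction. For (iii), when $\Delta f(p_i)>0$, Proposition \ref{prop4.9}(ii) shows that no trajectory can converge to $p_i$, so the same entrance-time argument passes through the level $\beta_i$.

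The main obstacle is (iv), where $\Delta f(p_i)<0$. Here I would describe the set $W_i$ of initial data whose flow concentrates at $p_i$, which plays the role of a stable manifold. Using the rescaled ODE for $(P,1-|P|^2)$ given before \eqref{sublevelset}, the radial direction is attracting at $p_i$, because $\frac{\d}{\d s}(1-|P|^2)\approx 384\al\var^4\Delta f(p_i)<0$. In the tangential directions the ODE is the gradient flow of $f$, up to $o(1)$ errors; by \eqref{88}, $f(p)$ increases along the flow. Consequently the unstable directions of the shadow dynamics near $p_i$ are the directions in which $f$ decreases, and these number $3-\morse(f,p_i)$. I would then show that a small sphere of dimension $3-\morse(f,p_i)$, namely $j(\{p\}\times\{\var_0\})$ with $p$ ranging over a small sphere in the descending manifold of $p_i$ on $\S^3$ and $\var_0$ small, lies in $E_f^{\beta_i+\theta_0}$, and that its flow leaves to $E_f^{\beta_i-\theta_0}$. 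The disk it bounds, consisting of $j(p,\var)$ with $p$ in the descending disk, gives the cell to be attached. The remaining data, those not in $W_i$, are retracted by entrance times as in (ii). The delicate point is the uniform control of the $o(1)$ errors for the ODE near $(p_i,\var\to0)$, so that the deformation is continuous across $W_i$. For this I would invoke the uniform estimates of Lemmas \ref{lem:5.5}--\ref{lem:5.8}, together with the nondegeneracy of $p_i$, to build an isolating neighborhood in the Conley-index sense.
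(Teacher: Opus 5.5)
\textbf{The genuine error is in (iv).} Your treatment of (ii) and (iii) is the standard entrance-time argument and is fine.

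By Proposition~\ref{prop4.9}(i) and \eqref{88}, the shadow flow is, up to $o(1)$, gradient \emph{ascent} for $f$. In Morse coordinates $f(p)=f(p_i)+|p^+|^2-|p^-|^2$, its unstable directions at $p_i$ are therefore the $p^+$-directions, in which $f$ increases. These are the ones that number $3-\morse(f,p_i)$. The $f$-decreasing directions $p^-$ number $\morse(f,p_i)$ and are stable. Your ``consequently'' inverts this, and the cell you build does not work:
\begin{itemize}
\item \emph{Energy is on the wrong side.} For $p$ on a small sphere in the descending manifold of $f$, one has $f(p)<f(p_i)$. Lemma~\ref{lem:5.13}(i), with $v=0$ and $\Delta_{g_{\S^3}}f(p_i)<0$, then gives $E_f[j(p,\var_0)]\approx-\frac{16\pi^2}{3}\log\bigl(f(p)+2\var_0^2\Delta_{g_{\S^3}}f(p)\bigr)>\beta_i$. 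So these points sit above level $\beta_i$.
\item \emph{Dynamics is on the wrong side.} The shadow flow drives these points back toward $p_i$ with $\var\to0$. They lie (approximately) in your stable set $W_i$, not in the part that flows into $E_f^{\beta_i-\theta_0}$.
\item \emph{Dimension is wrong.} That sphere has dimension $\morse(f,p_i)-1$.
\end{itemize}

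\textbf{The correct cell.} It must be $\{j(p,\var_0):\ p^-=0,\ |p^+|\le\rho\}$ with $\var_0\ll\rho$. On its boundary $f=f(p_i)+\rho^2$, so $E_f<\beta_i$ there. Any trajectory starting strictly below $\beta_i$ then enters $E_f^{\beta_i-\theta_0}$ in finite time, since its limit level is some $\beta_j\le\beta_{i+1}$. Locally $W_i$ is the transverse set $\{p^+\approx0\}$, of codimension $3-\morse(f,p_i)$.

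\textbf{What makes the deformation near $W_i$ rigorous.} The paper, following \cite{MS2006}, does not rely on the shadow ODE of Lemmas~\ref{lem:5.5}--\ref{lem:5.8} alone. It uses the energy expansions of Lemma~\ref{lem:5.13}(ii)--(iv) and Lemma~\ref{lem:5.12} in the coordinates $(s,p,v)$ on $\cB_{r_0}(p_i)$. These show that $E_f$ decreases in $|p^+|$ and increases in $|p^-|$, in $s$ (because $\Delta_{g_{\S^3}}f(p_i)<0$) and in $v$. This exhibits $(0,p_i,0)$ as a critical point at infinity of index $3-\morse(f,p_i)$. Your Conley-index sketch leaves exactly this unsupplied.

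\textbf{Part (i): a heavier route with two unfilled steps.}
\begin{itemize}
\item Proposition~\ref{prop:3.7} gives $v(t,w_0)\to0$ only trajectory by trajectory. ``Flowing for all time'' into a fixed neighborhood of $j(\S^3\times(0,1])$ therefore needs a time $T(w_0)$ that is continuous on the non-compact set $E_f^{\beta_0}$. It also needs a retraction of that neighborhood onto $j(\S^3\times(0,1])$ inside $E_f^{\beta_0}$. You supply neither.
\item Your reason that the cone stays below $\beta_0$ (the limit as $\var\to0$ plus continuity) does not control intermediate $\var$. The correct reason is that $E[w_{p,\var}]=0$ by \eqref{sameenergy_vw}, and $\dashint_{\S^3}fe^{3\hat w_{p,\var}}\,\d s_{g_{\S^3}}\ge\min f=f(p_1)$ because $\dashint_{\S^3}e^{3\hat w_{p,\var}}\,\d s_{g_{\S^3}}=1$. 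Hence $E_f[w_{p,\var}]\le\beta_1$ for every $\var$.
\end{itemize}

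The paper avoids all of this. It contracts linearly via $H_1(s,w_0)=(1-s)w_0+c(s,w_0)$, possibly leaving the sublevel set. It then flows back to $E_f^{\beta_0}$ up to the first entrance time $T(s,w_0)$. That time is finite by Lemma~\ref{lem:5.10}, since all limit levels are at most $\beta_1<\beta_0$. It is continuous because $\frac{\d}{\d t}E_f=-4F_2<0$. Since $E_f[0]\le\beta_1$, the endpoint of the contraction is $0$. Only the trajectory-wise asymptotics is needed.
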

	
	\begin{proof} 
		(i) Fix $\beta_0>\beta_1$. For $ s\in [0,1]$, define a homotopy	
		$H_1(s, w_0):=(1-s) w_0+c(s, w_0)$,
		where  $c(s, w_0)$	s a suitable constant such
		that  $H_1(s, w_0)\in C_*^{\infty}$. Given $s\in [0,1]$  and $w_0$, by Lemma \ref{lem:5.10} and the choice of $\beta_0$, there exists a minimal time $T=T(s, w_0)$ such that $E_f[w(T, H_1(s, w_0))] \leq  \beta_0$. Moreover,  by Lemma \ref{lem:2.1} and the assumption on $f$,   the map  $T(s, w_0)$  depends continuously on $s$ and $w_0$. We may therefore define $H(s, w_0):= w(T(s, w_0), H_1(s, w_0))$,which provides a contraction of $E_f^{\beta_0}$ within itself. Indeed, since $T(0, w_0)=0$, we have $H(0, w_0)=w(0, w_0)=w_0$. On the other hand, $H_1(1, w_0)=0$ and $E_f[0] \leq \beta_1<\beta_0$, hence $T(1, w_0)=0$ and $H(1, w_0)=w(0,0)=0$. This shows that $E_f^{\beta_0}$ is contractible.
		
		(ii)  	Statement (ii) follows  by  a contradiction argument. We refer to \cite[Proposition 5.11]{MS2006} for details.
	\end{proof}

	To complete the proofs of parts (iii) and (iv) in Proposition \ref{prop:5.11}, we first establish two preliminary lemmas.
	\begin{lem}\label{lem:5.12}
		Let  $v\in C^4(\B ^4)\cap \cH_{g_{\B ^4}}$  satisfy $P^4_{g_{\B ^4}}v=0$ in $ \B^4$, and assume that the   induced  normalized metric $h$ satisfies
		\eqref{23} and \eqref{26}. Then	there exists a uniform constant $C>0$ such that
		\[
		\|\hat v\|^2_{H^{\frac{3}{2}}(\S ^3,g_{\S^3})}\leq CE[v], 
		\]
		provided that $\|\hat v\|_{H^{\frac{3}{2}}(\S ^3,g_{\S^3})}$ is sufficiently small.
	\end{lem}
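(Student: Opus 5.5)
We first reduce to a statement on $\S^3$. Since $P^4_{g_{\B^4}}v=0$, $v\in\cH_{g_{\B^4}}$ and \eqref{samev} holds, the computation leading to \eqref{sameenergy_vw} gives $E[v]=2\hat E[\hat v]$, i.e.
\[
E[v]=2\la \cP^3_{g_{\S^3}}\hat v,\hat v\ra_{L^2(\S^3,g_{\S^3})}+4\int_{\S^3}Q^3_{g_{\S^3}}\hat v\,\d s_{g_{\S^3}}
=2\|\hat v\|^2_{\dot H^{3/2}}+8\int_{\S^3}\hat v\,\d s_{g_{\S^3}}.
\]
Expand $\hat v=\sum_i \hat v^i\hat\varphi_i$ in the spherical harmonic basis $\{\hat\varphi_i\}$, as in the proof of Theorem \ref{thm:4.1}. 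Then $\|\hat v\|^2_{\dot H^{3/2}}=\sum_{i\ge1}\Lam_i|\hat v^i|^2$ with $\Lam_i\ge 6$ for $i\ge 1$. Also $\int_{\S^3}\hat v\,\d s_{g_{\S^3}}=\sqrt{2}\,\pi\,\hat v^0$. So the claim reduces to two facts. First, the low modes $\hat v^0,\dots,\hat v^4$ are controlled by the high modes. Second, the linear term $8\int\hat v$ is quadratically small and does not destroy positivity.

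\textbf{Step 1: the constraints.} From \eqref{26}, $\int_{\S^3}(e^{3\hat v}-1)\,\d s_{g_{\S^3}}=0$. Expanding $e^{3\hat v}-1=3\hat v+R(\hat v)$ with $|R(\hat v)|\le C\hat v^2e^{3|\hat v|}$, we get
\[
3\int_{\S^3}\hat v\,\d s_{g_{\S^3}}=-\int_{\S^3}R(\hat v)\,\d s_{g_{\S^3}}.
\]
By Hölder, the Sobolev embedding $H^{3/2}\hookrightarrow L^p$ for all $p<\infty$, and the Moser--Trudinger-type bound (Beckner's inequality) $\int e^{6|\hat v|}\le C$ when $\|\hat v\|_{H^{3/2}}\le 1$, this gives
\[
\Big|\int_{\S^3}\hat v\,\d s_{g_{\S^3}}\Big|\le C\|\hat v\|^2_{H^{3/2}},\qquad\text{so } |\hat v^0|\le C\|\hat v\|_{H^{3/2}}^2.
\]
In the same way, \eqref{23} gives $\int x_i(e^{3\hat v}-1)=0$, hence $|\hat v^i|\le C\|\hat v\|_{H^{3/2}}^2$ for $i=1,\dots,4$, exactly as in \eqref{72}.

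\textbf{Step 2: the sign of the linear term.} The delicate point is that $8\int\hat v$ is only $O(\|\hat v\|^2)$, which is of the same order as the main term. To handle it, expand to second order: $e^{3\hat v}-1=3\hat v+\tfrac92\hat v^2+O(|\hat v|^3e^{3|\hat v|})$. Then
\[
\int_{\S^3}\hat v\,\d s_{g_{\S^3}}=-\frac32\int_{\S^3}\hat v^2\,\d s_{g_{\S^3}}+O(\|\hat v\|^3_{H^{3/2}}).
\]
This shows the linear term is in fact negative, so its size must be compared against the quadratic term. Plugging in,
\[
E[v]=2\sum_{i\ge1}\Lam_i|\hat v^i|^2-12\sum_{i\ge0}|\hat v^i|^2+O(\|\hat v\|^3).
\]
By Step 1, the modes $i=0,\dots,4$ contribute only $O(\|\hat v\|^4)$. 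For $i\ge5$ we have $2\Lam_i-12\ge 2\cdot24-12=36$, and $2\Lam_i-12\ge c(1+\Lam_i)$ with a uniform $c>0$. Hence
\[
E[v]\ge c\sum_{i\ge5}(1+\Lam_i)|\hat v^i|^2-C\|\hat v\|^3\ge c'\|\hat v\|^2_{H^{3/2}}-C\|\hat v\|^3_{H^{3/2}}.
\]
The last inequality again uses Step 1 to put the low modes back into the norm. For $\|\hat v\|_{H^{3/2}}$ small this gives the lemma.

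The main obstacle is the cubic remainder: we need $\int|\hat v|^3e^{3|\hat v|}\le C\|\hat v\|^3_{H^{3/2}}$. I would obtain it by Hölder, the $L^6$ Sobolev embedding, and the uniform exponential integrability of $\hat v$ in a small $H^{3/2}$-ball, which follows from Beckner's inequality. Note also that the gap is essential: the naive coercivity constant $2\Lam_1-12=0$ fails for the first-order modes, which is exactly why the center-of-mass condition \eqref{23} is needed to remove them.
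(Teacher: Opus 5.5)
Your proof is correct and follows the paper's argument. Both use the volume constraint \eqref{26} to replace $8\int\hat v$ by $-12\int\hat v^2$ up to a cubic error, use \eqref{26} and \eqref{23} to make modes $0$--$4$ negligible, and then close with the spectral gap $\Lam_5=24$. Your bookkeeping is slightly more careful than the paper's: its first line drops the factor $2$ in front of $\la\cP^3_{g_{\S^3}}\hat v,\hat v\ra$, so it writes $\Lam_i-12$ where $2\Lam_i-12$ is correct (the conclusion is unaffected), and you derive $|\hat v^i|\le C\|\hat v\|^2_{H^{3/2}}$ directly from the constraints instead of citing the flow-based estimate \eqref{72}.
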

	\begin{proof}
		Using  \eqref{EW}, \eqref{P43},  \eqref{23}, \eqref{samev},  and \eqref{72}, and arguing as in the proof of Theorem \ref{thm:4.1}, we obtain
		\begin{align*}
			E[v]=&\,\int_{\S ^3}(\hat v\cP^{3}_{g_{\S ^3}}\hat v+8\hat v)\, \d s _{g_{\S ^3}}\\
			=&\,\int_{\S ^3}\Big(\hat v\cP^{3}_{g_{\S ^3}}\hat v+\frac{8}{3}(e^{3\hat v}-1)-12\hat v^2\Big)\, \d s _{g_{\S ^3}}+O(\|\hat v\|_{H^{\frac{3}{2}}(\S ^3,g_{\S^3})}^3)\\
			=&\,\sum_{i=0}^\infty(\Lam_i-12)| \hat v^i|^2+O(\|\hat v\|_{H^{\frac{3}2}(\S ^3,g_{\S^3})}^3)\\\geq&\, \sum_{i=5}^\infty(\Lam_i-12)| \hat v^i|^2+o(\|\hat v\|_{H^{\frac{3}{2}}(\S ^3,g_{\S^3})}^2)\\
			\geq&\,\frac{\Lam_5-12}{\Lam_5+1}\sum_{i=0}^\infty(\Lam_i+1)| \hat v^i|^2+o(\|\hat v\|_{H^{\frac{3}{2}}(\S ^3,g_{\S^3})}^2).
		\end{align*}
		Since  $\Lam_5=24$ (see \eqref{eigenvalue})	and $\|\hat v\|_{H^{\frac{3}{2}}(\S ^3,g_{\S^3})}$ is assumed sufficiently small, the desired estimate follows as in the proof of Lemma \ref{lem:3.2}.
	\end{proof}
	
	Before stating the next lemma, we introduce some notation.

	For $r_0>0$ and each critical point $p_i\in\S ^3$ of $f$, define
	\begin{align*}
		\cB_{r_0}(p_i):
		=\Big\{ w\in C^\infty_*: &\,g=e^{2w}g_{\B ^4}\mbox{ induces
			a normalized metric} \,h=\Phi^* g=e^{2v}g_{\B ^4} \\
		& \mbox{ with } \hat \Phi=\hat \Phi_{p,s}\mbox{ for some }p\in\S ^3, \, s\in (0,1] \mbox{ such that } \\
		&\|\hat v\|^2_{H^{\frac{3}{2}}(\S ^3,g_{\S^3})}+|p-p_i|^2+s^2<r_0^2\Big\}.
	\end{align*}
	For convenience, we regard  $(s,p,v)$
	as local coordinates for $w\in \cB_{r_0}(p_i)$.
	Since each critical point of $f$ is assumed nondegenerate, the Morse lemma yields local coordinates near $p_i$ such that one may write $p=p^{+}+p^{-}$ in $T_{p_i} \S^3$ (with $p_i$ identified with 0) and
	\[
	f(p)=f(p_i)+|p^+|^2-|p^-|^2.
	\]

	We now state the next technical lemma, which is mainly concerned with expansions of the relevant quantities in terms of the local coordinates $(s, p, v)$.
	\begin{lem}\label{lem:5.13}
		For $r_0>0$ and  $w=(s,p,v)\in \cB_{r_0}(p_i)$ for some $1\leq i\leq N$. Then there hold \\
		(i) 
		\[\dashint_{\S ^3}f \circ \hat\Phi_{p,s} \, \d s _{\hat h} 
		=f(p)+2s^2\Delta_{g_{\S ^3}}f(p)+O(s^3)+o(1)s\|\hat v\|_{H^{\frac{3}{2}}(\S ^3,g_{\S^3})},\]
		where $o(1)\to 0$ as $r_0\to 0$.\\
		(ii) 
		\[\Big|\frac{\pa E_f [w]}{\pa  s}
		+\frac{64\pi^2s}{3}\frac{\Delta_{g_{\S ^3}}f(p)}{f(p)}\Big|
		= O(s^2+(s+|p-p_i|)\|\hat v\|_{H^{\frac{3}{2}}(\S ^3,g_{\S^3})}) .\]\\
		(iii) For any $q\in T_p\S ^3$, there holds
		\[\Big|\frac{\pa E_f [w]}{\pa  p}\cdot q+\frac{16\pi^2}{3}\frac{\d f(p)\cdot q}{f(p)}\Big|= O(s(s+\|\hat v\|_{H^{\frac{3}{2}}(\S ^3,g_{\S^3})})|q|).\]\\
		(iv) There exists a uniform constant $c_0>0$ such that
		\[
		\Big\la \frac{\pa E_f [w]}{\pa  v},v\Big\ra \geq c_0\|\hat v\|_{H^{\frac{3}{2}}(\S ^3,g_{\S^3})}^2+o(1)s\|\hat v\|_{H^{\frac{3}{2}}(\S ^3,g_{\S^3})},
		\]
		where  $\la \cdot,\cdot\ra $ denotes the duality
		pairing of $H^{\frac{3}{2}}(\S^3,g_{\S^3})$ with its dual, and $o(1)\to 0$ as $r_0\to 0$.
	\end{lem}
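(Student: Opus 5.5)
The plan follows the corresponding expansion in Malchiodi--Struwe \cite[Lemma 5.13]{MS2006} (and its analogues in \cite{H2012,CX2011}). Everything is written in the coordinates $(s,p,v)$ and reduced to the sphere. By \eqref{sameenergy_vw} and the change of variables induced by $\hat\Phi_{p,s}$,
\[
E_f[w]=E[v]-\frac{16\pi^2}{3}\log\Big(\dashint_{\S^3}f\circ\hat\Phi_{p,s}\, e^{3\hat v}\,\d s_{g_{\S^3}}\Big),
\]
with $E[v]=2\hat E[\hat v]$ independent of $(s,p)$. Thus the $s$- and $p$-derivatives only hit the logarithmic term, and the $v$-derivative splits into a quadratic part and a nonlinear part.

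\textbf{Step (i).} Write $\d s_{\hat h}=e^{3\hat v}\d s_{g_{\S^3}}$ and split
\[
\dashint f\circ\hat\Phi_{p,s}\,\d s_{\hat h}=\dashint f\circ\hat\Phi_{p,s}\,\d s_{g_{\S^3}}+\dashint (f\circ\hat\Phi_{p,s}-f(p))(e^{3\hat v}-1)\,\d s_{g_{\S^3}}.
\]
The first term comes from the Taylor expansion \eqref{lem:5.5-6} in stereographic coordinates, exactly as in the computation of $I^i,II^4$ in Lemma \ref{lem:5.5}. The linear term vanishes by oddness, and the quadratic term gives $2s^2\Delta_{g_{\S^3}}f(p)$, up to $O(s^3)$ (the tail outside $B(0,1/s)$ is $O(s^3)$). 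For the second term, Lemma \ref{lem:5.3} gives $\|f\circ\hat\Phi_{p,s}-f(p)\|_{L^2}=O(s)$, and the linear part $2s\,\d f(p)y$ pairs with $e^{3\hat v}-1\approx 3\hat v$. Here the normalization \eqref{23}, i.e. the vanishing of the first-mode coefficients (the first spherical harmonics), gives an $o(1)$ gain, as in \eqref{72}. The remainder is quadratic in $\hat v$ and uses $\|\hat v\|<r_0$. This yields $o(1)s\|\hat v\|_{H^{3/2}}$.

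\textbf{Steps (ii)--(iii).} Differentiate the logarithm:
\[
\pa_s E_f=-\frac{16\pi^2}{3}\,\frac{\int \pa_s(f\circ\hat\Phi_{p,s})e^{3\hat v}}{\int f\circ\hat\Phi_{p,s}e^{3\hat v}}.
\]
By \eqref{sec:5.2-2}, $\pa_s(f\circ\hat\Phi_{p,s})=s^{-1}\d f(\hat\Phi_{p,s})\cdot\sum y_ie_i$. Expanding as above, the $v$-independent part gives $4s\Delta f(p)+O(s^2)$, while the denominator is $f(p)+O(s^2+s\|\hat v\|)$; this produces the main term $-\frac{64\pi^2 s}{3}\Delta f(p)/f(p)$. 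The cross term with $e^{3\hat v}-1$ is bounded by $\|\pa_s f\circ\hat\Phi\|_{L^2}\|\hat v\|$. One checks that it is $O((s+|\nabla f(p)|)\|\hat v\|)$, and $|\nabla f(p)|=O(|p-p_i|)$ by nondegeneracy. Similarly, by \eqref{sec:5.2-1}, $\pa_p(f\circ\hat\Phi_{p,s})\cdot q=\d f(\hat\Phi_{p,s})\,\d\hat\Phi\,q$, whose average is $\d f(p)q+O(s^2|q|)$ (first-order terms cancel by symmetry). The cross term with $e^{3\hat v}-1$ is $O(s\|\hat v\|\,|q|)$ because $\d f(\hat\Phi_{p,s})-\d f(p)=O(s|y|)$ in $L^2$. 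Dividing by the denominator gives (iii).

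\textbf{Step (iv)} is the expected main obstacle. We have
\[
\Big\langle\frac{\pa E_f}{\pa v},v\Big\rangle=2\,\mathbb P^{4,3}(v,v)+8\int\hat v-16\pi^2\frac{\int f_{\hat\Phi}e^{3\hat v}\hat v}{\int f_{\hat\Phi}e^{3\hat v}}.
\]
Replace $f_{\hat\Phi}$ by $f(p)$; the error is $O(s)$ in $L^2$, and with the first-mode cancellation from \eqref{23} it becomes $o(1)s\|\hat v\|$. The remaining expression is the constant-coefficient quantity $2\sum_i(\Lam_i-12)|\hat v^i|^2+O(\|\hat v\|^3)$, computed as in Lemma \ref{lem:5.12}. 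Using \eqref{72} to kill modes $0,\dots,4$ and $\Lam_5=24$, we get the coercivity constant $c_0$. The delicate point is the precise bookkeeping showing that the $f$-variation error is only $o(1)s\|\hat v\|$ rather than $Cs\|\hat v\|$. I would try to obtain this by combining the orthogonality of $\hat v$ to the first spherical harmonics with the fact that the leading part of $f_{\hat\Phi}-f(p)$ is a first spherical harmonic pulled back by $\hat\Phi_{p,s}$. If that fails, smallness of $r_0$ should at least give $|p-p_i|+s$ small.
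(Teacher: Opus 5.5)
Your architecture matches the paper's. The paper proves only (iv) and defers (i)--(iii) to Malchiodi--Struwe. In (iv) it splits off $f(p)$ exactly as you do: a numerator error $II$, a denominator error $III$ controlled by (i), and a constant-coefficient part handled by $\Lambda_5=24$ and \eqref{72}.

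The gap is the mechanism you give for the decisive $o(1)\,s\|\hat v\|_{H^{3/2}}$ bound, which you assert in (i) and leave open in (iv). Orthogonality to the first spherical harmonics cannot produce it. Indeed $s^{-1}(f\circ\hat\Phi_{p,s}-f(p))\to 2\,\d f(p)\cdot y$ in $L^2(\S^3)$, where $y=x'/(1+x_4)$ in stereographic coordinates centered at $p$. This limit is not in the span of $1,x_1,\dots,x_4$; it blows up at $-p$ and has components in all higher modes. So its pairing with $e^{3\hat v}-1\approx 3\hat v$ is in general of order $s|\nabla f(p)|\,\|\hat v\|$, even if the modes $0,\dots,4$ of $\hat v$ vanish. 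The ``pulled-back first harmonic'' idea fails for the same reason, since $x_j\circ\hat\Phi_{p,s}$ is not a first harmonic. In fact $\int_{\S^3}(x\circ\hat\Phi_{p,s}-p)(e^{3\hat v}-1)\,\d s_{g_{\S^3}}=2\pi^2(S-p)$, the gap between the true and approximate centers of mass, which is only $O(s\|\hat v\|)$.

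What gives the gain is your fallback, made quantitative. Since $\nabla f(p_i)=0$, we have $|\nabla f(p)|\le C|p-p_i|<Cr_0$. The computation \eqref{lem:5.5-6.14} then gives $\|f\circ\hat\Phi_{p,s}-f(p)\|_{L^2(\S^3,g_{\S^3})}=O(s|\nabla f(p)|+s^{3/2})=o(1)\,s$ as $r_0\to0$, recalling $s<r_0$. H\"older then yields $|II|=o(1)\,s\|\hat v\|_{H^{3/2}}$, which is the bound the paper states. The same estimate handles the cross term in (i) without any appeal to \eqref{23}.

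A related quantitative slip affects (ii). Bounding the cross term by $\|\pa_s(f\circ\hat\Phi_{p,s})\|_{L^2}\|\hat v\|$ only gives $O((s^{1/2}+|p-p_i|)\|\hat v\|)$. The reason is that the piece $4s\,\nabla\d f(p)(y,y)$ has $L^2(\S^3)$-norm of order $s^{1/2}$, because $\int_{|y|<1/s}|y|^4(1+|y|^2)^{-3}\,\d y$ grows like $1/s$. Pair instead in $L^r\times L^{r'}$ with $1<r<3/2$, where that piece is $O(s)$, and use $\|e^{3\hat v}-1\|_{L^{r'}}\le C\|\hat v\|_{H^{3/2}}$. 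This is the same device as the $L^{6/5}$ bound in Lemma \ref{lem:5.3}.
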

	\begin{proof} Since the proof follows essentially the same strategy as that of \cite[Lemma 5.13]{MS2006}, and related computations in even  dimensions can be found in \cite{H2012,CX2011}, we omit the details of the proofs of parts (i)-(iii) and only provide the proof of (iv) below.
		
		(iv) Set	$A=A(w)=\dashint_{\S ^3}f \circ \hat\Phi_{p,s} \, \d s _{\hat h}$.
		We compute from \eqref{variation}, \eqref{EW} and \eqref{sameenergy_vw} that 
		\begin{align*}
			\Big\la \frac{\pa E_f [w]}{\pa  v},v\Big\ra 
			=&\,\Big\la \frac{\pa E[v]}{\pa  v},v\Big\ra 
			-8A^{-1}\int_{\S ^3}f \circ\hat\Phi_{p,s}\hat v e^{3\hat v}\, \d s _{g_{\S ^3}}\\
			:=&\,\Big\la \frac{\pa E[v]}{\pa  v},v\Big\ra 
			-8\int_{\S ^3}\hat ve^{3\hat v}\, \d s _{g_{\S ^3}}-I,
		\end{align*}
		where
		\begin{align*}
			I=&\,8A^{-1}\int_{\S ^3}(f\circ\hat\Phi_{p,s}-f(p))\hat v e^{3\hat v}\, \d s _{g_{\S ^3}}
			+8(A^{-1}f(p)-1)\int_{\S ^3}\hat v e^{3\hat v}\, \d s _{g_{\S ^3}}\\
			:=&\,II+III.
		\end{align*}
		As in the proof of Lemma \ref{lem:5.12},   we have 
		\begin{align*}
			&\,\Big\la \frac{\pa E[v]}{\pa  v},v\Big\ra 
			-8\int_{\S ^3}\hat v e^{3\hat v}\, \d s _{g_{\S ^3}}\\
			=&\,\int_{\S ^3}(2\hat v \cP^{3}_{g_{\S ^3}} \hat v+4T_{g_{\B ^4}} \hat v)\, \d s _{g_{\S ^3}}
			-8\int_{\S ^3}\hat ve^{3\hat v}\, \d s _{g_{\S ^3}}\\
			=&\,\int_{\S ^3}2\hat v \cP^{3}_{g_{\S ^3}} \hat v \, \d s _{g_{\S ^3}}
			-8\int_{\S ^3}\hat v(e^{3\hat v}-1)\, \d s _{g_{\S ^3}}\\
			=&\,2\int_{\S ^3}(\hat v\cP^{3}_{g_{\S ^3}}\hat v-12 \hat v^2)\, \d s _{g_{\S ^3}}
			+o(1)\|\hat v\|_{H^{\frac{3}{2}}(\S ^3,g_{\S^3})}^2
			\\\geq &\,c_0\|\hat v\|_{H^{\frac{3}{2}}(\S ^3,g_{\S^3})}^2+o(1)\|\hat v\|_{H^{\frac{3}{2}}(\S ^3,g_{\S^3})}^2
		\end{align*}
		for some constant $c_0>0$.
		Moreover,  we derive that 
		\[|II|=O(\|f \circ \hat\Phi_{p,s}-f(p)\|_{L^2(\S ^3,g_{\S^3})}\|\hat v(e^{3\hat v}-1)\|_{L^2(\S ^3,g_{\S^3})}) = o(1)s\|\hat v\|_{H^{\frac{3}{2}}(\S ^3,g_{\S^3})},\]
		while from (i) we obtain
		\[
		|III|=O\Big(|A-f(p)|\int_{\S ^3}|\hat v|e^{3\hat v}\, \d s _{g_{\S ^3}}\Big)= O((s^2+s\|\hat v\|_{H^{\frac{3}{2}}(\S ^3,g_{\S^3})})\|\hat v\|_{H^{\frac{3}{2}}(\S ^3,g_{\S^3})}).
		\]
		Combining these estimates, we complete  the claim (iv).
	\end{proof}

	Since Lemmas \ref{lem:5.12} and \ref{lem:5.13} have been established, we may proceed as in the proof of Proposition 5.11 in \cite{MS2006} to conclude parts (iii) and (iv) of Proposition \ref{prop:5.11}. We therefore omit the details and refer the reader to \cite{MS2006}; see also \cite{H2012,CX2011} for the corresponding arguments in even  dimensions.

	Now we are ready to prove Theorem \ref{thm:main}.
	\begin{proof}[Proof of Theorem \ref{thm:main}]
		Suppose by contradiction that  the flow $\eqref{flow2} $ diverges for every initial value $u_0$ and
		there is no conformal metric in the  conformal class of $g_{\B^4}$ with the $T$-curvature equal
		to $f$.
		As we will see, the flow \eqref{flow}
		then may be used to show that
		for sufficiently large $\beta_0$, the set $E_f^{\beta_0}$
		is contractible.
		Moveover, the flow defines a homotopy equivalence of the set $N_0=E_f^{\beta_0}$
		with a set $N_\infty$ whose homotopy type is that of a point $\{p_0\}$ with cells of dimension
		$3-\morse(f,p)$ attached for every critical point $p$ of $f$ on $\S ^3$
		such that $\Delta_{g_{\S ^3}}f(p)<0$.
		We then, by standard Morse theory (see \cite[Theorem 4.3 on p.36]{changinfinite}), obtain the identity
		\be \label{94}
		\sum_{i=0}^3t^im_i=1+(1+t)\sum_{i=0}^3t^i k_i
		\ee 
		for the Morse polynomials of $N_\infty$ and $N_0$
		and a connection term with coefficients $k_i\geq 0$, where $m_i$ is defined as in \eqref{14}.
		Equating the coefficients in the polynomials on the left and right hand side of \eqref{94},
		we obtain \eqref{eq:system}, which violates the hypothesis in Theorem \ref{thm:main}
		and thus leads to the desired contradiction.
	\end{proof}

	The proof of Corollary \ref{cor:1} now follows as an immediate consequence.
	\begin{proof}[Proof of Corollary \ref{cor:1}]
		If we substitute $t = -1$ into the Morse polynomial identity \eqref{94}, derived under the assumption of non-existence, we obtain	
		\[
		\sum_{i=0}^3 (-1)^i m_i = 1.
		\]	
		In view of the definition of $ m_i $ in \eqref{14}, this is equivalent to	
		\[
		\sum_{\{a\in\S^3 : \nabla_{g_{\S^3}} f(a)=0, \Delta_{g_{\S^3}}  f(a)<0\}} (-1)^{\morse(f,a)} = -1,
		\]	
		which contradicts the hypothesis \eqref{13} of Corollary \ref{cor:1}. Therefore, that hypothesis forces the existence of a solution, and Corollary \ref{cor:1} is proved.
	\end{proof}

	\subsection*{Acknowledgement} 
	P.T. Ho  is partially supported by   the National Science and Technology Council (NSTC),
	Taiwan, with grant Number: 112-2115-M-032-006-MY2.
	C.B. Ndiaye is partially supported by NSF grant DMS--2000164 and AMS 2025–2026 Claytor-Gilmer Fellowship.  The research of L. Sun is partially supported by CAS Project for Young Scientists in Basic Research Grant (No.\,YSBR-031), Strategic Priority Research Program of the Chinese Academy of Sciences (No.\,XDB0510201), National Key R\&D Program of China (No.\,2022YFA1005601), NSFC China (No.\,12471115).
	
	\appendix

\bibliographystyle{plain}
\bibliography{T-202601ref.bib}

\end{document}